\documentclass[12pt]{article}

\usepackage[utf8]{inputenc}
\usepackage{titlesec}
\usepackage{graphicx}
\usepackage[a4paper,width=150mm,top=25mm,bottom=25mm,bindingoffset=6mm]{geometry}
\usepackage{fancyhdr}
\usepackage[english]{babel}
\usepackage[numbers]{natbib}
\usepackage{url}
\usepackage[hidelinks]{hyperref}
\usepackage[nodayofweek]{datetime}
\usepackage[all,cmtip]{xy}
\usepackage{comment}

\usepackage{physics}
\usepackage{amsthm}
\usepackage{mathtools}
\usepackage{amssymb}
\usepackage{bbm}
\usepackage{enumerate}
\usepackage[shortlabels]{enumitem}
\usepackage{tikz-cd}
\usepackage{datetime}
\usepackage{amsmath}
\usepackage{calligra}

\setlength{\bibsep}{0pt}

\bibliographystyle{plain}

\numberwithin{equation}{section}

\theoremstyle{definition}
\newtheorem{definition}{Definition}[section]
\theoremstyle{definition}

\theoremstyle{definition}
\newtheorem{remark}{Remark}[section]
\theoremstyle{remark}

\theoremstyle{plain}
\newtheorem{prop}{Proposition}[section]

\theoremstyle{plain}
\newtheorem{theorem}{Theorem}[section]
\theoremstyle{plain}
\newtheorem{cor}{Corollary}[section]
\theoremstyle{plain}
\newtheorem{lemma}{Lemma}[section]

\theoremstyle{plain}

\newtheorem*{conjecture}{Conjecture}
\theoremstyle{definition}
\newtheorem*{results}{Main results}

\newtheorem*{aknowledgments}{Aknowledgments}

\newcommand{\Z}{\mathbb{Z}}
\newcommand{\N}{\mathbb{N}}
\newcommand{\C}{\mathbb{C}}
\newcommand{\Q}{\mathbb{Q}}
\newcommand{\CP}{\mathbb{P}}

\newcommand{\A}{\mathbb{A}}

\newcommand{\sheaf}[1]{\mathcal{#1}}

\newcommand{\mf}{\mathcal{M}}

\newcommand{\grothgrp}[1]{K(#1)}
\newcommand{\SL}[1]{\textup{SL}(#1)}

\newcommand{\grothnum}[1]{\grothgrp{#1}_\text{num}}
\newcommand{\schC}{(\text{Sch}^{ft}/\C)}

\newcommand{\Rss}{R^{ss}}

\newcommand{\Proj}[1]{\textup{Proj}#1}

\DeclareMathOperator{\Soc}{Soc}
\DeclareMathOperator{\Pic}{Pic}
\DeclareMathOperator{\Coker}{Coker}
\DeclareMathOperator{\Ker}{Ker}

\DeclareMathOperator{\Hom}{Hom}
\DeclareMathOperator{\sheafHom}{\sheaf{H}\!\textit{om}\,}

\DeclareMathOperator{\Ext}{Ext}

\DeclareMathOperator{\hd}{hd}
\DeclareMathOperator{\rk}{rk}
\DeclareMathOperator{\Exts}{\mathcal{E}\!\mathit{xt}}
\DeclareMathOperator{\depth}{depth}

\DeclareMathOperator{\length}{length}
\DeclareMathOperator{\codim}{codim}

\DeclareMathOperator{\Spec}{Spec}
\DeclareMathOperator{\Quot}{Quot}

\DeclareMathOperator{\Supp}{Supp}
\DeclareMathOperator{\Grass}{Grass}
\DeclareMathOperator{\Coh}{Coh}

\DeclareMathOperator{\gr}{gr}
\DeclareMathOperator{\End}{End}

\newcommand{\Address}{{
  \bigskip
  \footnotesize

  \textsc{Universit\'e de Lorraine, IECL, F-54000 Nancy, France}\par\nopagebreak
  \textit{E-mail address}: \texttt{mihai-cosmin.pavel@univ-lorraine.fr}
}}
  
\begin{document}

\title{Restriction theorems for semistable sheaves}
\author{Mihai Pavel}
\date{}
\maketitle
\begin{abstract}
In this paper we prove restriction theorems for torsion-free sheaves that are (semi)stable with respect to the truncated Hilbert polynomial over a smooth projective variety. Our results apply in particular to Gieseker-semistable sheaves and generalize the well-known restriction theorems of Mehta and Ramanathan. As an application we construct a moduli space of sheaves in higher dimensions.
\end{abstract}

\section{Introduction}

In the moduli theory of sheaves, the so-called restriction theorems for slope semistability play a particularly important role. In many cases they provide a useful instrument to reduce the study of sheaves to lower dimensions. More exactly, they ensure that slope-semistability (or slope-stability) is preserved by restriction to a general divisor of sufficiently large degree on a smooth projective variety. For example, they were used to show Bogomolov's inequality in higher dimensions (see \cite[Thm.~7.3.1]{huybrechts2010geometry}). Another noteworthy application was in the construction of moduli spaces of slope-semistable sheaves over surfaces by Le Potier \cite{le1992fibre} and Jun Li \cite{li1993algebraic}.

At this point there are several different proofs of the restriction theorems in the literature. The first general results were given by Mehta and Ramanathan \cite{mehta1982semistable, mehta1984restriction}. In zero characteristic, Flenner later proved in \cite{flenner1984restrictions} an effective restriction theorem but only for slope-semistability. He found a bound on the degree of the divisor that depends only on the numerical invariants of the sheaf. Finally, by a different approach, Langer gave in \cite{langer2004semistable} stronger effective restriction theorems that work also in positive characteristic. 

\begin{results}
Let $X$ be a smooth projective variety over an algebraically closed field $k$, and fix $\sheaf{O}_X(1)$ a very ample line bundle. In this paper we prove restriction theorems for torsion-free sheaves on $X$ that are semistable with respect to the truncated Hilbert polynomial. We call such sheaves $\ell$-\textit{semistable} (see Definition~\ref{ssDef}), where $\ell$ marks the level of truncation. In particular one recovers the notion of slope-(semi)stability for $\ell=1$ and of Gieseker-(semi)stability for $\ell=\dim(X)$. Our aim is to prove the following two theorems, thus answering in the affirmative a conjecture posed by Langer in \cite[Conj.~3.13]{langer2009moduli}.

\begin{theorem}\label{thm:GiesekerRestriction}
Let $\ell < \dim(X)$ and $E$ be an $\ell$-semistable torsion-free sheaf on $X$. The restriction $E|_D$ to a general divisor $D \in |\sheaf{O}_X(a)|$ remains $\ell$-semistable for $a \gg 0$.
\end{theorem}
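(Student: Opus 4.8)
The plan is to adapt the method of Mehta and Ramanathan to the relative $\ell$-Harder–Narasimhan filtration and to argue by contradiction, the whole argument being organised around the behaviour of the truncated Hilbert polynomial under restriction. Since $E$ is torsion-free on the integral variety $X$, multiplication by a section cutting out $D$ yields, for every $D \in |\sheaf{O}_X(a)|$, an exact sequence $0 \to E(-a) \to E \to E|_D \to 0$, whence $P(E|_D, m) = P(E, m) - P(E, m-a)$. The right-hand side is a discrete derivative, so the coefficient of $m^{\dim(X)-1-j}$ of $P(E|_D)$ depends only on the top $j+1$ coefficients of $P(E)$. As $\ell < \dim(X)$, the top $\ell$ coefficients of the reduced Hilbert polynomial of $E|_D$ are therefore governed exactly by the top $\ell$ coefficients of that of $E$: this triangular correspondence is the structural reason the statement holds below the Gieseker level, and it lets me match the $\ell$-destabilizing inequality on $D$ with an inequality between truncated Hilbert polynomials of sheaves on $X$.

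Next I would globalise over the linear system. Let $T \subseteq |\sheaf{O}_X(a)|$ be the dense open locus of divisors $D$ with $E|_D$ torsion-free, let $\mathcal{D} \to T$ be the universal divisor, and consider the restricted family $E|_{\mathcal{D}}$. Invoking the existence of the relative $\ell$-Harder–Narasimhan filtration, after shrinking $T$ I obtain a subsheaf $\mathcal{F} \subseteq E|_{\mathcal{D}}$, flat over $T$, whose fibre $\mathcal{F}_D \subseteq E|_D$ is the maximal $\ell$-destabilizing subsheaf and whose truncated Hilbert polynomial is constant in $D$. If $E|_D$ failed to be $\ell$-semistable for general $D$, this $\mathcal{F}$ would be a proper nonzero subsheaf strictly $\ell$-destabilizing every $E|_D$, $D \in T$.

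The heart of the argument, and the step I expect to be the main obstacle, is to show that for $a \gg 0$ such a relatively destabilizing subsheaf must descend from a global subsheaf of $E$ on $X$. The family of candidate maximal destabilizing subsheaves is bounded (by the bounds on rank and slope), so a single $a$ can be chosen uniformly; with $a$ large, Serre vanishing and Castelnuovo–Mumford regularity estimates force the relevant restriction maps on $\Hom$ and cohomology groups between $X$ and the divisors to be isomorphisms. Using that the universal divisor dominates $X$, the compatible quotients $E|_D \twoheadrightarrow E|_D / \mathcal{F}_D$ then glue to a quotient of $E$, producing a saturated subsheaf $\tilde{F} \subseteq E$ with $\tilde{F}|_D = \mathcal{F}_D$ for general $D$ and with the expected Hilbert polynomial. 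Making the regularity bound uniform over the bounded family, and checking that the lift is saturated with the predicted truncated invariants, is the delicate part.

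Finally I would close the contradiction. Combining $\tilde{F}|_D = \mathcal{F}_D$ with the restriction formula for the truncated Hilbert polynomial from the first step, the $\ell$-destabilizing inequality for $\mathcal{F}_D \subseteq E|_D$ transfers, coefficient by coefficient, to an $\ell$-destabilizing inequality for $\tilde{F} \subseteq E$ on $X$. This contradicts the $\ell$-semistability of $E$, and hence, once $a$ is taken sufficiently large, $E|_D$ is $\ell$-semistable for general $D \in |\sheaf{O}_X(a)|$.
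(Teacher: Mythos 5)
Your first two steps and your last step track the paper faithfully: the restriction formula \eqref{eq:alphaComputations} together with Lemma~\ref{cor:destabilizing} for transferring truncated-polynomial inequalities, the relative $\ell$-Harder--Narasimhan family over a dense open subset of the linear system (Lemma~\ref{lemma:relativeHN}), and the final contradiction. The genuine gap is your third step, which you correctly flag as the main obstacle but then dispose of with an argument that does not work. You propose that boundedness lets you fix the degree $a$ ``uniformly'' and that Serre vanishing / Castelnuovo--Mumford regularity then make restriction maps on $\Hom$ isomorphisms, so that the quotients $E|_D \twoheadrightarrow E|_D/\mathcal{F}_D$ glue to a quotient of $E$. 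This is circular: Serre-type vanishing compares cohomology of a \emph{fixed} sheaf on $X$ with that of its restrictions as $a \to \infty$, but at this point of the argument there is no fixed object on $X$ in sight --- the destabilizers $\mathcal{F}_D$ live only on the divisors, and a priori their rank, their truncated Hilbert polynomial and (crucially) their determinant line bundles can all change as $a$ grows and as $D$ moves. Boundedness of the family for one fixed $a$ gives no uniformity across different values of $a$, and the statement is asymptotic in $a$. Nor do the quotients ``glue along the universal divisor'': descending the quotient $q^*E \to \sheaf{F}_a$ along $q : Z_a \to X$ would require it to be pulled back from $X$, which is exactly what is in question.

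What fills this hole in the paper is machinery your sketch never invokes. First, the proof is an induction on $\ell$ with base case the Mehta--Ramanathan theorem ($\ell = 1$); the inductive hypothesis makes $E|_D$ $(\ell-1)$-semistable for $a \gg 0$, which pins down all but one of the numerical invariants of the minimal $\ell$-destabilizing quotient (Remark~\ref{rmk:betaCt}). Second, a degeneration to normal-crossing divisors $D = D_1 + D_2$, with the family of destabilizers extended over a curve in $\Pi_a$ by properness of the Quot scheme, yields superadditivity of the Bernoulli-corrected invariants (Lemma~\ref{lemma:ineqbetweengamma}); combined with a discreteness-and-boundedness argument this shows that the remaining invariant $\beta_{n-1-\ell}(a)/(a\rk(a))$, hence the rank and the full truncated Hilbert polynomial of the destabilizer, stabilizes for $a \gg 0$ (Lemma~\ref{lemma:gammaConstant}), and that the determinants $L_a$ are eventually a single line bundle $L \in \Pic(X)$ (Lemma~\ref{lemma:sameLineB}). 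Only then does Serre duality/vanishing legitimately enter: with $\Lambda^r E$ and $L$ both fixed, the morphism $\Lambda^r(E|_D) \to L|_D$ extends to $\Lambda^r E \to L$ for $a \gg 0$, and the Grassmannian factorization produces the quotient $E \to F$ extending $E|_D \to \sheaf{F}_a|_D$ (Proposition~\ref{prop:extension}). Without the induction on $\ell$ and the stabilization of the destabilizer's invariants and determinant, your appeal to vanishing has nothing fixed to apply to, so the proposal as written does not prove the theorem.
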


\begin{theorem}\label{thm:restrictionStable}
Let $\ell < \dim(X)$ and $E$ be an $\ell$-stable torsion-free sheaf on $X$. The restriction $E|_D$ to a general divisor $D \in |\sheaf{O}_X(a)|$ remains $\ell$-stable for $a \gg 0$.
\end{theorem}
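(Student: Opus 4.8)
Since an $\ell$-stable sheaf is in particular $\ell$-semistable, Theorem~\ref{thm:GiesekerRestriction} already guarantees that $E|_D$ is $\ell$-semistable for a general $D \in |\sheaf{O}_X(a)|$ once $a \gg 0$; shrinking the relevant open locus if necessary, I may also assume that $D$ is smooth and that $E|_D$ is torsion-free, both being general conditions. The whole problem is therefore to upgrade semistability to stability, that is, to rule out that $E|_D$ is \emph{strictly} $\ell$-semistable for the general member of the system. Because $\ell$-stability is an open condition in the flat family $E|_{\sheaf{D}} \to |\sheaf{O}_X(a)|$, the $\ell$-stable locus is open, so it suffices to derive a contradiction from the assumption that $E|_D$ is strictly $\ell$-semistable for \emph{every} $D$ in a dense open subset $U \subseteq |\sheaf{O}_X(a)|$. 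Under that assumption the plan is to manufacture a proper subsheaf of $E$ on $X$ whose truncated reduced Hilbert polynomial equals $p_\ell(E)$, contradicting $\ell$-stability.

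The first step is to choose the destabilizing subsheaf \emph{canonically}, so that the fiberwise choices patch together. For strictly $\ell$-semistable $E|_D$ I would take the socle $\Soc(E|_D)$, the maximal $\ell$-polystable subsheaf with reduced truncated Hilbert polynomial equal to $p_\ell(E|_D)$; as in the slope and Gieseker cases this subsheaf is unique, hence canonical, and it is a \emph{proper} subsheaf precisely because $E|_D$ is not $\ell$-stable. Let $\sheaf{D} \subset X \times |\sheaf{O}_X(a)|$ be the incidence variety, with projections $q \colon \sheaf{D} \to X$ and $\pi \colon \sheaf{D} \to |\sheaf{O}_X(a)|$, and write $N = \dim |\sheaf{O}_X(a)|$. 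I would realize the socles as the restrictions along $\pi$ of a coherent subsheaf $\sheaf{S} \subset q^* E$ that is flat over $U$. Two inputs are needed: \emph{boundedness}, so that the socles $\Soc(E|_D)$ for $D \in U$ have bounded Hilbert polynomial and define points of a single relative $\Quot$-scheme over $U$ (they are $\ell$-semistable subsheaves of the $\ell$-semistable $E|_D$, so boundedness applies), and \emph{uniqueness}, which forces the corresponding section of this relative $\Quot$-scheme over $U$ to be the canonical one, in particular to exist after at most shrinking $U$.

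The decisive and hardest step is to \emph{descend} $\sheaf{S}$ along $q$ to a subsheaf of $E$. Since $a \gg 0$, the map $q$ is a $\CP^{N-1}$-bundle with connected fibers, and over a point $x \in X$ at which $E$ is locally free the restriction of $q^* E$ is the constant sheaf $E_x \otimes \sheaf{O}_{\CP^{N-1}}$; I would show that the restriction of $\sheaf{S}$ to such a fiber is a \emph{constant} family of subsheaves, so that $\sheaf{S}$ is pulled back from $X$. This rigidity is exactly the Mehta--Ramanathan descent: a flat family of subsheaves of a constant sheaf parametrized by a projective space, arising from the canonical socle, cannot move. Granting it, $\sheaf{S}$ descends to a subsheaf of $E$ over the dense open image of the good locus, which extends to a saturated subsheaf $F \subset E$ on all of $X$ because $E$ is torsion-free and the complement has codimension at least two; note $F$ is proper since its generic rank equals $\rk \Soc(E|_D) < \rk E$. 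Finally I would compare Hilbert polynomials: the exact sequence $0 \to F(-a) \to F \to F|_D \to 0$ expresses $p_\ell(F|_D)$ through a difference operator applied to $p_\ell(F)$, and likewise for $E$, and for $a \gg 0$ this operation is faithful on terms of order at least $d-\ell$ (this is where $\ell < \dim(X)$ enters), so the equality $p_\ell(\Soc(E|_D)) = p_\ell(E|_D)$ forces $p_\ell(F) = p_\ell(E)$ with $F$ proper, contradicting the $\ell$-stability of $E$. I expect the \textbf{rigidity/descent step} to be the main obstacle, together with the bookkeeping ensuring that the comparison of truncated Hilbert polynomials on $D$ and on $X$ remains faithful through level $\ell$.
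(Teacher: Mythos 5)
There are two genuine gaps, and each one sinks a load-bearing step of your plan.

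First, your canonical destabilizing subsheaf does not exist in the case that matters most. You claim $\Soc(E|_D)$ "is a proper subsheaf precisely because $E|_D$ is not $\ell$-stable", but this is false: if $E|_D$ is strictly \emph{polystable} (e.g. $E|_D \cong F \oplus F$ with $F$ $\ell$-stable), the $\ell$-socle is all of $E|_D$, and there is no canonical proper destabilizing subsheaf at all --- the stable summands move in positive-dimensional families, so your uniqueness-forces-a-section argument for the relative Quot scheme collapses exactly there. This is why the paper does not use the socle but the quotient by the \emph{extended} $\ell$-socle (Definition~\ref{def:extendedSocle}), and why it must first prove that the restriction of an $\ell$-stable (hence $\ell$-simple) sheaf to a general high-degree divisor remains $\ell$-\emph{simple} (Lemma~\ref{lemma:restrictionSimple}, resting on the depth/local-cohomology Lemma~\ref{lemma:simpleDepth}). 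By Lemma~\ref{lem:extendedSocle}, an $\ell$-simple, $\ell$-semistable sheaf equal to its extended socle is $\ell$-stable; so for an $\ell$-simple, strictly $\ell$-semistable restriction the quotient by the extended socle is both proper and canonical. The simplicity of restrictions is an essential ingredient your proposal never establishes and cannot do without.

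Second, the "rigidity/descent" step --- which you yourself flag as decisive --- is unjustified, and the principle you invoke is false: a flat family of subsheaves of a constant sheaf over a projective space can certainly move (the tautological subbundle $\sheaf{O}(-1) \subset \sheaf{O}^{\oplus 2}$ on $\CP^1$), and showing that the socles cannot vary along the fibers of $q$ is essentially equivalent to the theorem itself, so nothing has been reduced. The Mehta--Ramanathan descent is not a descent along $q$ at all. In the paper one writes $\det(\sheaf{G}_a) \cong p^*M_a \otimes q^*L_a$ using $\Pic(Z_a) \cong \Pic(X) \times \Pic(\Pi_a)$ (Remark~\ref{rmk:PicDec}), proves via the normal-crossing trick $D = D_1 + D_2$, $a = a_1 + a_2$ --- i.e.\ by comparing restrictions across \emph{different} degrees --- that $L_a \cong L$ for infinitely many $a$, and only then extends the destabilizing quotient from $D$ to $X$ through the induced morphism $\Lambda^r(E|_D) \to L|_D$ and a Serre-vanishing argument (Proposition~\ref{prop:extension}). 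The cross-degree comparison is unavoidable: the cohomological vanishing that extends the section from $D$ to $X$ requires the target line bundle $L$ to be fixed while $a \to \infty$, which your single-degree setup cannot provide. (Relatedly, the correct negation of the theorem only gives failure of stability for a sequence of arbitrarily large $a$, and the paper needs a further normal-crossing lemma to upgrade this to all $a \gg 0$ before the determinant argument can run; your framing quietly discards this issue without a substitute.)
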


As a consequence of Theorem~\ref{thm:GiesekerRestriction}, we also obtain a restriction theorem for Gieseker-semistable sheaves. However the range for $\ell$ is sharp in Theorem~\ref{thm:restrictionStable}. Indeed, the result is false for a Gieseker-stable but not slope-stable sheaf on a surface. For a different example see also \cite[Exam.~3.1]{toma2017continuity}.

Our proof is by induction on $\ell$ and follows the strategy applied by Mehta and Ramanathan \cite{mehta1982semistable, mehta1984restriction} in the case of slope-semistability. However, these methods give no effective bound on the degree of the divisor from which the restriction remains $\ell$-semistable. Finding such effective bounds may be an incentive for future studies. One may also ask whether the theorems hold for pure sheaves supported in positive codimension. In this regard, we proved in \cite{pavel2021moduli} an effective restriction theorem for slope-(semi)stable pure sheaves, by following a different approach due to Langer \cite{langer2004semistable}. We expect the result to hold more generally for $\ell$-semistable pure sheaves:

\begin{conjecture}
Let $E$ be an $\ell$-(semi)stable pure sheaf of dimension $d$ on $X$ with $\ell < d$. The restriction $E|_D$ to a general divisor $D \in |\sheaf{O}_X(a)|$ remains $\ell$-(semi)stable for $a \gg 0$.
\end{conjecture}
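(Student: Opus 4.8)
The plan is to replicate the induction on the truncation level $\ell$ that proves Theorems~\ref{thm:GiesekerRestriction} and~\ref{thm:restrictionStable}, but to seed it with the pure-sheaf slope case instead of the classical Mehta--Ramanathan theorem. The base case $\ell=1$ is the slope-restriction theorem for pure sheaves proved in \cite{pavel2021moduli}, so the entire content lies in the inductive step $\ell-1 \rightsquigarrow \ell$ for \emph{semistability}; the stable statement I would then deduce from the semistable one at the same level $\ell$, exactly as Theorem~\ref{thm:restrictionStable} is obtained from Theorem~\ref{thm:GiesekerRestriction}. One cannot run the induction directly for stability, since $\ell$-stability does not imply $(\ell-1)$-stability --- this is the same failure that makes the range $\ell < \dim(X)$ necessary in Theorem~\ref{thm:restrictionStable}. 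The first, formal, observation is that $\ell$-semistability does imply $(\ell-1)$-semistability, the truncated reduced Hilbert polynomials being compared from the top coefficient downwards; hence an $\ell$-semistable pure sheaf $E$ of dimension $d$ is $(\ell-1)$-semistable, and the induction hypothesis yields that $E|_D$ is $(\ell-1)$-semistable for general $D \in |\sheaf{O}_X(a)|$ with $a \gg 0$. In parallel, a Bertini argument along the support guarantees, for $a \gg 0$ and general $D$, that $E|_D$ is pure of dimension $d-1$ and that $D$ avoids the associated points of $E$, so that the degree-shift relations between the coefficients of $P(E)$ and of $P(E|_D)$ identify the level-$\ell$ truncation downstairs with that upstairs.

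For the inductive step I would argue by contradiction, supposing $E|_D$ is not $\ell$-semistable for general $D$. Inside the abelian, finite-length category of $(\ell-1)$-semistable sheaves on $D$ with the fixed reduced truncated polynomial $p_{\ell-1}(E|_D)$, the restriction $E|_D$ has a \emph{unique} maximal $\ell$-destabilizing subsheaf $F_D$, namely the first step of its level-$\ell$ Harder--Narasimhan filtration in that category. Uniqueness is the decisive structural input: it forces the $F_D$ to glue into a single subsheaf $\mathcal{F} \hookrightarrow E|_{\mathcal{D}_U}$ of the universal restriction over a dense open $U$ of the linear system, with $p_\ell(F_D) \succ p_\ell(E|_D)$ constant in $D$. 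That this family is parametrized by a finite-type relative Quot scheme over $U$, that is, that the destabilizers are bounded, would follow from the boundedness results for pure sheaves of Langer \cite{langer2004semistable} and \cite{pavel2021moduli}, applied to the level-$\ell$ destabilizer.

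The crux, precisely as in Mehta--Ramanathan \cite{mehta1982semistable, mehta1984restriction}, is to descend $\mathcal{F}$ to a subsheaf $F \subset E$ on $X$ with $p_\ell(F) \succ p_\ell(E)$, contradicting the $\ell$-semistability of $E$. For this I would exploit the projective-bundle structure of the incidence variety $\mathcal{D}_U \to X$ --- for $a \gg 0$ the divisors through a fixed point cut out a sub-system of fixed codimension --- together with the compatibility of the destabilizers across degrees: uniqueness of $F_D$ makes the families attached to $|\sheaf{O}_X(a)|$ and $|\sheaf{O}_X(2a)|$ agree on common members, and gluing across degrees yields $F$ on $X$. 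Finally, for the stable case the semistable statement already gives that $E|_D$ is $\ell$-semistable; were it only strictly so for general $D$, the canonical $\ell$-socle would glue by the same mechanism to a proper $F \subset E$ with $p_\ell(F) = p_\ell(E)$, contradicting $\ell$-stability.

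I expect the main obstacle to be this descent step in the presence of positive-codimensional, possibly singular, support. In the torsion-free proof one relies on the reflexivity of $E$ and on the non-locally-free locus having codimension at least two; for a pure sheaf supported on a singular $Z \subset X$ both tools are unavailable and must be replaced by intrinsic ones --- purity of $E$ and of $E|_D$, saturation computed inside $E$ rather than inside a reflexive hull on $X$, and $S_2$-control along $Z$. Ensuring that the relative saturation and the flat limit behave well when $Z \cap D$ is itself singular, and that the descended subsheaf is again pure of dimension $d$, is where the genuine technical work lies; the remainder is a level-by-level transcription of the argument already carried out for the torsion-free theorems.
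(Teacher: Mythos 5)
First, a point of order: the statement you were asked to prove is posed in the paper as an open \emph{conjecture} --- the paper explicitly says it expects the result to hold but proves it only for torsion-free sheaves (Theorems~\ref{thm:GiesekerRestriction} and~\ref{thm:restrictionStable}) and, in the pure case, only for $\ell=1$ in \cite{pavel2021moduli}. So there is no proof in the paper to compare against, and your argument has to stand on its own. It does not. What you have written is a faithful transcription of the paper's Mehta--Ramanathan strategy (induction on $\ell$ seeded by the slope case, a relative family of destabilizers over a dense open subset of $\Pi_a$ via the relative Harder--Narasimhan filtration as in Lemma~\ref{lemma:relativekHN}, then descent of the destabilizer to $X$), and the formal parts you state are fine: $\ell$-semistability does imply $(\ell-1)$-semistability by the lexicographic comparison, and the base case for pure sheaves is indeed available. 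But the entire content of the conjecture is concentrated in the step you defer to the final paragraph, and deferring it is not proving it.

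Concretely, the descent step in the torsion-free proof is not a ``gluing'' of the subsheaves $F_D$ across divisors --- subsheaves of $E|_D$ for varying $D$ have no sheaf-theoretic reason to glue, and members of $|\sheaf{O}_X(a)|$ and $|\sheaf{O}_X(2a)|$ share no common divisors at all. The actual mechanism is: (i) compare degrees via normal-crossing divisors $D = D_1 + D_2$ and a curve in $\Pi_a$, proving numerical constancy of the $\beta$-invariants (Lemmas~\ref{lemma:alphaineq}--\ref{lemma:gammaConstant}); (ii) use $\Pic(Z_a) \cong \Pic(X) \times \Pic(\Pi_a)$ (Remark~\ref{rmk:PicDec}) to produce determinant line bundles $L_a$ on $X$ and show they stabilize (Lemma~\ref{lemma:sameLineB}); (iii) extend the induced map $\Lambda^r(E|_D) \to L|_D$ to $\Lambda^r E \to L$ by a Serre-duality argument on the smooth $X$, and recover the quotient through the Grassmannian embedding plus reflexivity and codimension-two control (Proposition~\ref{prop:extension}). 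For a pure sheaf of dimension $d < n$, every tool in (ii) and (iii) breaks: the destabilizing quotient lives on $\Supp(E) \cap D$, a possibly singular, non-normal scheme of positive codimension in $D$, where determinant line bundles of the family, Serre duality, reflexive hulls, and the Picard-group splitting are all unavailable or meaningless. Naming the replacements (``saturation inside $E$'', ``$S_2$-control along $Z$'') identifies the obstruction --- the same one the paper identifies --- but supplies no construction; indeed this is precisely why the statement is a conjecture and not a corollary of the paper's methods. As it stands, your proposal is a plausible research plan, not a proof.
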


Our restriction theorems may find applications in the moduli theory of sheaves. To give an example already envisioned by Le Potier \cite{le1992fibre}, one may use the restriction theorems to construct moduli spaces of $\ell$-semistable sheaves on $X$. In this respect, here we give the construction when $\ell = \dim(X)-1$, to obtain a moduli space of sheaves in higher dimensions over the field of complex numbers. We recover a result due to Huybrechts--Lehn \cite[Sect.~8]{huybrechts2010geometry} when $X$ is a surface. By similar methods, Greb and Toma \cite{greb2017compact} constructed moduli spaces of slope-semistable sheaves in higher dimensions, but they had to work over the category of weakly normal varieties. In contrast to their result, we need not to impose such restrictive assumptions on schemes (compare Theorem~\ref{thm:mainThm} with \cite[Main Thm.]{greb2017compact}), but on the other hand we have a stronger stability condition on sheaves. Furthermore, in our case we are able to give a complete description of the geometric points of the moduli space (see Theorem~\ref{thm:geometricSep}).

The moduli spaces of slope-semistable sheaves constructed by Huybrechts--Lehn and Greb--Toma are closely related to the Donaldson--Uhlenbeck compactification of Hermitian Yang--Mills (HYM) connections (see \cite{li1993algebraic} and \cite{greb2018complex}). This is partly due to the so-called Kobayashi--Hitchin correspondence, which says that vector bundles over compact complex manifolds admit HYM connections (or Hermite--Einstein metrics) if and only if they are slope-polystable. There is an analogue correspondence between Gieseker-stable sheaves and almost Hermite--Einstein metrics (see \cite{leung97einstein}). For this reason, we expect that the moduli space we construct here may find applications in the context of gauge theory.
\end{results}

\begin{aknowledgments}
I would like to thank my advisor Matei Toma for his patient guidance, insightful suggestions and encouragement during my PhD studies.
\end{aknowledgments}

\section{Preliminaries}\label{sect:preliminaries}

In this section we set up notation and terminology. First we introduce the notion of $\ell$-(semi)stability and recall some of its basic properties, following \cite[Sect.~1]{huybrechts2010geometry}.

Throughout this paper, let $X$ be a smooth integral projective scheme of dimension $n$ over a field $k$, and $\sheaf{O}_X(1)$ a very ample line bundle on $X$. For the moment $k$ is a field of arbitrary characteristic, which is not necessarily algebraically closed.

Let $E$ be a pure sheaf of dimension $d$ on $X$, i.e.~any non-zero subsheaf $F \subset E$ has dimension $d$. The Hilbert polynomial of $E$ is given by its Euler characteristic $P(E,m) \coloneqq \chi(X, E \otimes \sheaf{O}_X(m))$ and has the form
\begin{align*}
    P(E,m) = \sum_{j=0}^d \alpha_{d-j}(E) \frac{m^{d-j}}{(d-j)!},
\end{align*}
with rational coefficients $\alpha_{d-j}(E)$. In what follows, a pure sheaf on $X$ of dimension $n = \dim(X)$ will be called \textit{torsion-free}.

\begin{definition}\label{ssDef}
Let $E$ be a coherent sheaf of dimension $d$ on $X$ and take $1 \leq \ell \leq d$. Then $E$ is called \textit{$\ell$-semistable} (resp. \textit{$\ell$-stable}) if it is pure and
    \begin{align*}
    \left(\frac{\alpha_{d-1}(F)}{\alpha_d(F)},\ldots,\frac{\alpha_{d-\ell}(F)}{\alpha_d(F)}\right) \leq \left(\frac{\alpha_{d-1}(E)}{\alpha_d(E)},\ldots,\frac{\alpha_{d-\ell}(E)}{\alpha_d(E)}\right) \quad (\textnormal{resp.~}<)
\end{align*}
for all subsheaves $F \subset E$ such that $0 < \alpha_d(F) < \alpha_d(E)$, where $\leq$ is the lexicographic order.
\end{definition}

With this definition we recover the notion of slope-(semi)stability for $\ell=1$, and of Gieseker-(semi)stability for $\ell=d$. More generally, one may define $\ell$-stability in the quotient category $\Coh_d(X)/\Coh_{d-\ell-1}(X)$ (see \cite[Sect.~1.6]{huybrechts2010geometry}), where $\Coh_d(X)$ is the category of coherent sheaves of dimension $\leq d$. However, for the limited purpose of this paper, we restrict to work within the category $\Coh(X)$.

Sometimes it is convenient to formulate the $\ell$-semistability conditions in terms of truncated Hilbert polynomials. If $\Q[T]_j$ denotes the vector subspace of polynomials of degree $\leq j$ in $\Q[T]$, let $P_{\ell}(E) \in \Q[T]_d/\Q[T]_{d-\ell-1}$ be the $\ell$-truncation of the Hilbert polynomial of $E$ and $p_{\ell}(E) \coloneqq P_{\ell}(E)/\alpha_d(E)$ its reduced form. Then, clearly, $E$ is $\ell$-semistable (resp.~$\ell$-stable) if and only if
\begin{align*}
    p_{\ell}(F) \leq p_{\ell}(E) \quad ( \text{resp.}~<)
\end{align*}
for all subsheaves $F \subset E$ such that $0 < \alpha_d(F) < \alpha_d(E)$. Above we compare the polynomials with respect to the natural lexicographic order of their coefficients. 

\subsection*{Properties of $\ell$-semistability}

Most of the basic properties of Gieseker-(semi)stability, such as the ones treated in \cite[Sect.~1]{huybrechts2010geometry}, hold also for $\ell$-(semi)stability with almost identical proofs. Furthermore, one can show by using known techniques that families of $\ell$-semistable sheaves satisfy the following properties: boundedness, openness and properness, which are particularly important for studying moduli of sheaves. 

\begin{lemma}\label{lemma:boundedness}
Assume that $k$ is algebraically closed. The family of $\ell$-semistable sheaves of fixed Hilbert polynomial on $X$ is bounded.
\end{lemma}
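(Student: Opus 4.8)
The plan is to reduce $\ell$-semistability to ordinary slope-semistability (the case $\ell=1$), for which boundedness is already classical, and then invoke the standard Kleiman/Grothendieck criterion for boundedness. The key observation is that $\ell$-semistability is a \emph{stronger} condition than $1$-semistability: the lexicographic comparison in Definition~\ref{ssDef} begins with the coefficient $\alpha_{d-1}/\alpha_d$, which is exactly the (normalized) slope. More precisely, if $E$ is $\ell$-semistable then for every subsheaf $F \subset E$ with $0 < \alpha_d(F) < \alpha_d(E)$ either $\alpha_{d-1}(F)/\alpha_d(F) < \alpha_{d-1}(E)/\alpha_d(E)$, or the slopes are equal and the remaining coefficients obey the lexicographic inequality. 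In either case $\mu(F) \le \mu(E)$, so every $\ell$-semistable sheaf is slope-semistable. Hence the family of $\ell$-semistable sheaves of fixed Hilbert polynomial $P$ is contained in the family of slope-semistable torsion-free sheaves with the same $P$, and it suffices to bound the latter.

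First I would record the containment above as a short lemma (slope-semistability follows from $\ell$-semistability for any $\ell \ge 1$), noting that the Hilbert polynomial $P$ fixes $\alpha_d = \alpha_d(E)$ and $\alpha_{d-1} = \alpha_{d-1}(E)$, hence fixes the rank and the slope $\mu(E)$. Next I would appeal to the boundedness of the family of slope-semistable torsion-free sheaves with fixed rank and fixed Chern/numerical data on a smooth projective variety over an algebraically closed field. This is a standard result: in characteristic zero it follows from Grothendieck's lemma together with the Le Potier--Simpson estimate, and in arbitrary characteristic from Langer's theorem \cite{langer2004semistable}; in the form I need it is precisely the Kleiman boundedness criterion applied to the family of slope-semistable sheaves, as in \cite[Thm.~3.3.7]{huybrechts2010geometry}. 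Since our family sits inside this bounded family and the Hilbert polynomial is fixed throughout, boundedness of the $\ell$-semistable family follows immediately.

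The step requiring the most care is making the reduction precise, i.e.\ verifying that the fixed Hilbert polynomial $P$ really pins down all the numerical invariants needed to invoke the ambient boundedness result, and that the comparison of leading coefficients in the lexicographic order genuinely yields the non-strict slope inequality $\mu(F) \le \mu(E)$ for \emph{every} relevant subsheaf (including the boundary cases where several leading coefficients coincide). Beyond that the argument is essentially a citation: boundedness is a \emph{downward-closed} property under passing to a subfamily cut out by a stronger stability condition, so no genuinely new estimate is needed. The whole proof should therefore be quite short, the only genuine content being the elementary observation that $\ell$-semistability implies $1$-semistability.
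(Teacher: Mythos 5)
Your proposal is correct and takes essentially the same route as the paper: the paper's entire proof is a citation of Langer's boundedness theorem \cite[Thm.~4.4]{langer2004semistable}, whose applicability rests on precisely the reduction you spell out, namely that the lexicographic condition in Definition~\ref{ssDef} forces $\alpha_{d-1}(F)/\alpha_d(F) \leq \alpha_{d-1}(E)/\alpha_d(E)$ for all relevant subsheaves, so $\ell$-semistable sheaves are slope-semistable and hence have $\mu_{\max}$ fixed by the Hilbert polynomial. Your explicit verification of this reduction and the remark that a subfamily of a bounded family is bounded simply fill in the details the paper leaves implicit.
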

\begin{proof}
See \cite[Thm.~4.4]{langer2004semistable}.
\end{proof}

\begin{lemma}\label{lemma:openness}
The property of being $\ell$-semistable, resp. simple, geometrically $\ell$-stable, is open in flat families of sheaves. 
\end{lemma}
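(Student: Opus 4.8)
The plan is to prove openness of $\ell$-semistability, simplicity, and geometric $\ell$-stability separately, since each reduces to a standard semicontinuity argument but with slightly different inputs.

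The plan is to treat the three properties separately, each through the standard dévissage via a relative Quot scheme, as in \cite[Prop.~2.3.1]{huybrechts2010geometry}. Fix a flat family $\mathcal{E}$ of sheaves on $X$ parametrized by a $k$-scheme $S$ of finite type, with fibres of Hilbert polynomial $P$. For $\ell$-semistability I would first characterize the complement: a geometric fibre $\mathcal{E}_s$ fails to be $\ell$-semistable precisely when it admits a subsheaf $F$ with $0 < \alpha_d(F) < \alpha_d(\mathcal{E}_s)$ and $p_\ell(F) > p_\ell(\mathcal{E}_s)$ in the lexicographic order. Replacing $F$ by its saturation only increases the lower coefficients of its reduced truncated Hilbert polynomial, so I may assume $F$ is saturated, equivalently that the quotient $\mathcal{E}_s \twoheadrightarrow \mathcal{E}_s/F$ is pure of dimension $d$.

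The crucial reduction is boundedness. Since $p_\ell(F) > p_\ell(\mathcal{E}_s)$ forces in particular $\alpha_{d-1}(F)/\alpha_d(F) \ge \alpha_{d-1}(\mathcal{E}_s)/\alpha_d(\mathcal{E}_s)$ on the leading coordinate, every such destabilizing $F$ has slope bounded below, so by Grothendieck's boundedness lemma the family of these subsheaves --- equivalently of the corresponding quotients --- is bounded. Hence only finitely many Hilbert polynomials $P_1, \dots, P_r$ occur among the destabilizing quotients $\mathcal{E}_s/F$. For each $P_i$ I form the relative Quot scheme $Q_i \coloneqq \Quot_{X \times S/S}(\mathcal{E}, P_i)$, which is projective over $S$. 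On $Q_i$ the universal kernel has the constant Hilbert polynomial $P - P_i$, so the destabilizing inequality either holds at every point or at none; let $Q_i'$ be the union of those components on which it holds. The non-semistable locus of $S$ is then exactly $\bigcup_i \mathrm{im}(Q_i' \to S)$, a finite union of images of proper morphisms, hence closed. Its complement, the $\ell$-semistable locus, is therefore open.

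Openness of simplicity is the most direct: the function $s \mapsto \dim_{k(s)} \Hom(\mathcal{E}_s, \mathcal{E}_s) = h^0(X_s, \sheafEnd(\mathcal{E}_s))$ is upper semicontinuous on $S$ by the semicontinuity theorem, and it is everywhere $\ge 1$ because homotheties are always present; simplicity is the locus where it equals $1$, that is, where this upper semicontinuous function attains its minimum, which is open.

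Finally, for geometric $\ell$-stability I would run the same Quot-scheme argument, but with the complement now cut out by the non-strict inequality: a geometric fibre $\mathcal{E}_{\bar s}$ fails to be $\ell$-stable exactly when it has a subsheaf $F$ with $0 < \alpha_d(F) < \alpha_d(\mathcal{E})$ and $p_\ell(F) \ge p_\ell(\mathcal{E})$, covering both the unstable and the strictly-semistable cases. The leading coordinate again bounds the slope from below, so boundedness applies verbatim and the same finite union of Quot images describes the non-stable locus. The one extra point to check is compatibility with passage to the algebraic closure: since formation of $\Quot_{X \times S/S}$ commutes with base change, the geometric fibre over $\bar s$ acquires a destabilizing subsheaf if and only if $\bar s$ lies over the image locus computed above, so testing on geometric points causes no difficulty. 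I expect the main obstacle to be the boundedness step for the $\ell$-truncated comparison: one must verify that controlling only the leading (slope) coordinate of the lexicographic order already suffices to invoke Grothendieck's lemma, and that saturation does not disturb the relevant inequalities --- once this is secured, the properness of the Quot schemes closes the argument uniformly in all three cases.
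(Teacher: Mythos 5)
Your handling of $\ell$-semistability and geometric $\ell$-stability is essentially the paper's own argument (and the argument of \cite[Prop.~2.3.1]{huybrechts2010geometry}, to which the paper defers for the stable case): pass to saturated destabilizing subsheaves, equivalently pure destabilizing quotients; bound their slopes to invoke Grothendieck's boundedness lemma and conclude that only finitely many Hilbert polynomials occur; assemble the corresponding relative Quot schemes, which are proper over $S$; and observe that the non-(semi)stable locus is the image of these proper morphisms, hence closed. Your remark that testing on geometric fibres is harmless because formation of the relative Quot scheme commutes with base change is exactly the point needed for the geometrically $\ell$-stable case, and using the non-strict inequality $p_\ell(F) \geq p_\ell(\mathcal{E}_s)$ there is correct.

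The one step that does not work as written is the simplicity case. You identify $\dim_{k(s)}\Hom(\mathcal{E}_s,\mathcal{E}_s)$ with $h^0(X_s,\sheafEnd(\mathcal{E}_s))$ and then invoke ``the semicontinuity theorem.'' But the classical semicontinuity theorem applies to the fibrewise cohomology of a \emph{single} coherent sheaf that is flat over $S$, and $\sheafHom(\mathcal{E},\mathcal{E})$ is in general neither $S$-flat nor compatible with base change: when the fibres $\mathcal{E}_s$ are not locally free, $\sheafHom(\mathcal{E},\mathcal{E})\otimes k(s)$ need not coincide with $\sheafHom(\mathcal{E}_s,\mathcal{E}_s)$, so the numbers $h^0(X_s,\sheafEnd(\mathcal{E}_s))$ are not the fibrewise $h^0$ of any flat family to which that theorem applies. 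The correct tool --- and the one the paper cites --- is the semicontinuity theorem for the \emph{relative} $\Hom$ and $\Ext$ functors of a pair of $S$-flat families \cite{banica80variation}, which directly gives upper semicontinuity of $s \mapsto \dim_{k(s)}\End(\mathcal{E}_s)$. With that substitution your conclusion stands: simplicity is the open locus where this upper semicontinuous function attains its minimal value $1$. This is a fixable misattribution rather than a structural flaw, but as stated the justification relies on a theorem whose hypotheses fail.
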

\begin{proof}
The proof is adapted from \cite[Prop.~2.3.1]{huybrechts2010geometry}. The property of being simple, i.e. $\dim_{k(s)}(\End(\sheaf{F}_s))=1$, is open by the semicontinuity properties of the relative Ext sheaves \cite[Satz~3]{banica80variation}. We treat next the case of $\ell$-semistability, see the cited reference for the remaining case.

Let $S$ be a $k$-scheme of finite type and $\sheaf{F}$ an $S$-flat family of sheaves on $X$ with Hilbert polynomial $P$. Since the property of being pure is open in flat families, we may assume the fibers $\sheaf{F}_s$ are pure of dimension $d \coloneqq \deg(P)$.
We first show that all destabilizing quotients $\sheaf{F}_s \to F'$ fit inside a proper Quot scheme. Consider the following set
\begin{align*}
    H = \{ &P(F') \in \Q[T] : \sheaf{F}_s \to F' \textnormal{ is a pure quotient of dimension $d$} \\
    {}&\text{for a closed point $s \in S$ such that $p_{\ell}(F') <  p_{\ell}$}\}.
\end{align*}
where $p_{\ell}$ denotes the reduced $\ell$-truncation of $P$.
By a boundedness result due to Grothendieck (see \cite[Lem.~1.7.9]{huybrechts2010geometry}) we know that $H$ is finite. Consider the relative Quot scheme $\varphi: Q \coloneqq \Quot(\sheaf{F},H) \to S$ of quotients $\sheaf{F}_s \to F'$ with $P(F') \in H$. Since $Q$ is proper, its scheme-theoretic image $\varphi(Q)$ is a closed subset of $S$.
Then a fiber $\sheaf{F}_s$ is $\ell$-semistable if and only if $s$ is not contained in $\varphi(Q)$. 
\end{proof}

\begin{lemma}\label{lemma:properness}
Assume that $k$ is algebraically closed. Let $A$ be a discrete valuation ring with residue field $k$ and quotient field $K$. Let $E$ be an $A$-flat family of $d$-dimensional sheaves on $X$ such that $E_K = E \otimes_A K$ is $\ell$-semistable on $X_K$. Then there exists a subsheaf $F \subset E$ such that $F_K = E_K$ and $F_k$ is $\ell$-semistable on $X$. 
\end{lemma}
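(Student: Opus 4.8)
The statement is a Langton-type semistable reduction theorem, and the plan is to adapt the proof of \cite[Thm.~2.B.1]{huybrechts2010geometry} from Gieseker- to $\ell$-semistability. If $E_k$ is already $\ell$-semistable we are done with $F = E$, so assume it is not. I would then replace $E$ by a descending chain of $A$-flat subsheaves $E = E^{(0)} \supset E^{(1)} \supset E^{(2)} \supset \cdots$, all with the same generic fibre $E_K$, produced by successive elementary modifications along the special fibre; the $\ell$-semistability of $E_K$ will force the chain to terminate, and the sheaf at which it stops has an $\ell$-semistable special fibre.

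The elementary modification rests on a Harder--Narasimhan formalism for $\ell$-semistability with respect to the lexicographic order on reduced $\ell$-truncated Hilbert polynomials, which I would first establish: every pure sheaf of dimension $d$ admits a unique maximal destabilizing subsheaf. Granting this, let $B_n \subset E^{(n)}_k$ be the maximal destabilizing subsheaf, $G_n \coloneqq E^{(n)}_k/B_n$ its quotient, and set $E^{(n+1)} \coloneqq \Ker\!\big(E^{(n)} \twoheadrightarrow E^{(n)}_k \twoheadrightarrow G_n\big)$. Since a submodule of an $A$-torsion-free module is $A$-torsion-free, $E^{(n+1)}$ is again $A$-flat, and tensoring with $K$ kills $G_n$ (a sheaf on the special fibre), so $E^{(n+1)}_K = E_K$. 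A diagram chase with a uniformizer $\pi$ of $A$ yields the two short exact sequences
\begin{align*}
0 \to B_n \to E^{(n)}_k \to G_n \to 0, \qquad 0 \to G_n \to E^{(n+1)}_k \to B_n \to 0,
\end{align*}
showing that each modification moves the destabilizing part $B_n$ toward the generic fibre.

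To force termination I would track the maximal reduced $\ell$-truncated Hilbert polynomial $p_{\ell,\max}(E^{(n)}_k) \coloneqq p_{\ell}(B_n)$. Using the standard fact that a nonzero homomorphism between $\ell$-semistable sheaves does not increase the reduced $\ell$-truncated Hilbert polynomial, together with the swapped sequence above, one compares $B_{n+1}$ with $B_n$ and deduces $p_{\ell,\max}(E^{(n+1)}_k) \leq p_{\ell,\max}(E^{(n)}_k)$ in the lexicographic order. All the $B_n$ occur as subsheaves of fibres of the bounded family of sheaves with Hilbert polynomial $P(E_K)$, so by Lemma~\ref{lemma:boundedness} and Grothendieck's finiteness of admissible Hilbert polynomials (cf.~\cite[Lem.~1.7.9]{huybrechts2010geometry}) their reduced $\ell$-truncated Hilbert polynomials lie in a discrete set bounded below. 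Hence the non-increasing sequence $p_{\ell,\max}(E^{(n)}_k)$ stabilizes, say at $p_{\max}$ for $n \geq N$. In this stable range the comparison maps $B_{n+1} \to B_n$ are forced to be nonzero with image of reduced polynomial $p_{\max}$, so the destabilizing subsheaf persists through the tower; assembling these persistent destabilizers, and using that the Hilbert polynomial is constant in $A$-flat families, produces a subsheaf $\sheaf{B}_K \subset E_K$ with $p_{\ell}(\sheaf{B}_K) = p_{\max} > p_{\ell}(E_K)$, contradicting the $\ell$-semistability of $E_K$. Therefore the modification process halts, and the resulting special fibre is $\ell$-semistable.

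I expect the main obstacle to be twofold, both stemming from the replacement of the single slope of the classical argument by a vector of coefficients compared lexicographically. First, one must check that the Harder--Narasimhan maximal destabilizer and the monotonicity estimate behave correctly for this lexicographic order, which in turn requires the boundedness of Lemma~\ref{lemma:boundedness} and the discreteness input to guarantee that the relevant reduced truncated polynomials form a well-ordered, discrete set from below, so that a non-increasing sequence must stabilize. Second, and most delicate, is the passage from the stabilized tower back to an honest coherent subsheaf of the generic fibre $E_K$ destabilizing it; this limiting step is the technical heart of Langton's method and is where the bookkeeping with the truncated Hilbert polynomials must be carried out with care.
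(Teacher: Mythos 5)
Your proposal is correct and takes essentially the same route as the paper: the paper's entire proof is a citation to Langton \cite{langton1975valuative} and to its Gieseker-semistable adaptation in \cite[Thm.~2.B.1]{huybrechts2010geometry}, which is precisely the elementary-modification-and-termination argument you sketch, transported to the lexicographic order on reduced $\ell$-truncated Hilbert polynomials. The subtleties you flag (discreteness of the relevant polynomials and the limiting step recovering a destabilizing subsheaf of $E_K$) are exactly the points handled in that reference, so your outline is a faithful expansion of what the paper leaves implicit.
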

\begin{proof}
In the slope-semistable case this result is due to Langton \cite{langton1975valuative}. In fact her proof can be adapted to work also for $\ell$-semistability (see \cite[Thm.~2.B.1]{huybrechts2010geometry}).
\end{proof}

\begin{definition}
Let $E$ be a coherent sheaf on $X$. Then $E$ is called $\ell$-\textit{simple} if $\End(E|_{X \setminus Y}) \cong k$ for every closed subset $Y \subset X$ such that $\codim(\Supp(E) \cap Y,\Supp(E)) \geq \ell + 1$. 
\end{definition}

\begin{remark}
Clearly any $\ell$-simple sheaf is also $(\ell + 1)$-simple. Thus any $\ell$-simple sheaf $E$ is in particular simple, i.e. $\End(E) \cong k$. Also, one easily checks that any $\ell$-stable sheaf is $\ell$-simple.
\end{remark}

\begin{lemma}\label{lemma:simpleDepth}
Let $E$ be a torsion-free sheaf on $X$. Then there exists a unique coherent sheaf $E \subset G$ on $X$ such that
\begin{enumerate}[(1),nolistsep]
  \item $E$ and $G$ are isomorphic in codimension $\ell$,
  \item $G$ satisfies
   \begin{align}\label{eq:depthProp}
  \depth_x(G) \geq 2 \text{ for every } x \in X \text{ such that } 		\dim(\sheaf{O}_{X,x})\geq \ell+1 \tag{$\star$}
  \end{align}
  \item for every closed subset $Y \subset X$ such that $\codim(Y,X) \geq \ell + 1$,
\begin{align*}
  \Hom(G,G) \to \Hom(G|_{X \setminus Y},G|_{X \setminus Y})
\end{align*}
is an isomorphism.
\end{enumerate}
\end{lemma}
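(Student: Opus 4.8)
The plan is to produce $G$ as an explicit subsheaf of the sheaf of rational sections of $E$ — its saturation by germs that extend across closed subsets of codimension $\geq \ell+1$ — and then to read off the three properties from the standard translation between the depth condition $(\star)$ and normality (the bijectivity of restriction maps across high-codimension loci). Since $X$ is integral and $E$ torsion-free, $E$ embeds into its generic stalk $E_\eta$, a finite-dimensional vector space over the function field of $X$ which I regard as a constant sheaf on $X$. For $W \subseteq X$ open I would set
\[
G(W) \;=\; \bigcup_{Y} E(W \setminus Y) \;\subseteq\; E_\eta, \qquad j_Y\colon X\setminus Y \hookrightarrow X,
\]
the union over closed $Y\subseteq W$ with $\codim(Y,X)\geq \ell+1$, i.e.\ $G=\varinjlim_Y j_{Y\ast}j_Y^\ast E$. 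This is an $\sheaf{O}_X$-submodule containing $E$ and is quasi-coherent, being a filtered colimit of quasi-coherent sheaves on the Noetherian scheme $X$.

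The first point to settle is coherence, which I regard as the main obstacle, since $G$ is a priori an infinite colimit. Here smoothness of $X$ enters: the reflexive hull $E^{\vee\vee}$ is coherent and satisfies Serre's condition $S_2$, hence is normal, so $E(W\setminus Y)\subseteq E^{\vee\vee}(W\setminus Y)=E^{\vee\vee}(W)$ for every $Y$ of codimension $\geq \ell+1\geq 2$. This sandwiches $E\subseteq G\subseteq E^{\vee\vee}$, and a quasi-coherent subsheaf of a coherent sheaf on a Noetherian scheme is coherent; thus $G$ is coherent.

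Next I would verify the three properties. For $(1)$: at a point $x$ with $\dim\sheaf{O}_{X,x}\leq \ell$ no closed subset of codimension $\geq \ell+1$ passes through $x$, so any germ in $G_x$ is represented by a section of $E$ on a neighbourhood avoiding the relevant $Y$, giving $G_x=E_x$; hence the non-isomorphism locus of $E\hookrightarrow G$, being closed and contained in the set of points of codimension $\geq \ell+1$, is itself closed of codimension $\geq \ell+1$. For $(2)$: the saturation is built so that, given $Y$ of codimension $\geq \ell+1$ and $t\in G(W\setminus Y)$, quasi-compactness yields a single $Y'$ of codimension $\geq \ell+1$ with $t\in E((W\setminus Y)\setminus Y')$, whence $t\in E(W\setminus(Y\cup Y'))\subseteq G(W)$; thus $G(W)\xrightarrow{\sim}G(W\setminus Y)$, i.e.\ $H^0_Y(G)=H^1_Y(G)=0$, which is exactly $\depth_x(G)\geq 2$ for all $x$ with $\dim\sheaf{O}_{X,x}\geq \ell+1$. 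For $(3)$: a local presentation $\sheaf{O}_X^{\oplus a}\to\sheaf{O}_X^{\oplus b}\to G\to 0$ exhibits $\sheafHom(G,G)$ as the kernel of $G^{\oplus b}\to G^{\oplus a}$, so it too has depth $\geq 2$ at every point of codimension $\geq \ell+1$; its sections therefore extend across any such $Y$, which gives the isomorphism, injectivity following from torsion-freeness and density of $X\setminus Y$.

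Finally, uniqueness is immediate from this description: any sheaf $G'$ satisfying $(1)$ and $(2)$ agrees with $E$ off some closed $Z$ of codimension $\geq \ell+1$ and, by the normality provided by $(2)$, satisfies $G'(W)=G'(W\setminus Z)=E(W\setminus Z)$ for all $W$; hence $G'$ coincides with $G$ as a subsheaf of $E_\eta$. The only genuinely delicate steps are the coherence argument above and the two invocations of the depth–normality equivalence, both of which I would phrase through local cohomology on the local rings $\sheaf{O}_{X,x}$.
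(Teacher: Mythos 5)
Your proof is correct, but it builds $G$ by a genuinely different route than the paper. The paper constructs $G$ \emph{from above}: it sets $F \coloneqq E^{\vee\vee}$, $Q \coloneqq F/E$, lets $T \subset Q$ be the maximal subsheaf of $Q$ supported in codimension $\geq \ell+1$, and defines $G \subset F$ by $G/E \cong T$; coherence is then automatic, property $(2)$ follows from the depth estimate $\depth_x(G) \geq \min\{\depth_x(F),\depth_x(Q/T)+1\}$ applied to $0 \to G \to F \to Q/T \to 0$ together with reflexivity of $F$, and property $(3)$ is proved via Grothendieck's vanishing theorem and the local-cohomology spectral sequence and exact sequence from SGA2. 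You instead construct $G$ \emph{from below}, as the saturation $\varinjlim_Y j_{Y*}j_Y^*E$ inside $E_\eta$, which makes properties $(1)$, $(2)$ and --- notably --- \emph{uniqueness} essentially formal consequences of the depth/extension-of-sections dictionary, at the price of having to prove coherence; your coherence argument re-imports the same key input as the paper (the reflexive hull $E^{\vee\vee}$ and its $S_2$/normality on the smooth variety $X$) to sandwich $E \subseteq G \subseteq E^{\vee\vee}$. For $(3)$ the two arguments are close in spirit: you show $\sheafHom(G,G)$ has depth $\geq 2$ at points of codimension $\geq \ell+1$ (via the kernel presentation $0 \to \sheafHom(G,G) \to G^{\oplus b} \to G^{\oplus a}$ and the same depth lemma) and then extend sections, while the paper runs the equivalent computation through $\Exts^i_Y(G,G)$. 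Two remarks: first, your approach has the merit of actually proving the uniqueness asserted in the statement, which the paper's written proof does not address (and your argument also shows a posteriori that the paper's $G$ coincides with yours); second, the one step you compress --- ``so it too has depth $\geq 2$'' for the Hom sheaf --- silently uses that the image of $G^{\oplus b} \to G^{\oplus a}$ is torsion-free, hence of depth $\geq 1$, before applying the depth lemma; this is standard but worth spelling out, since it is exactly the same lemma (\cite[Tag 00LX]{stacks-project}) the paper invokes.
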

\begin{proof}
Let $F \coloneqq E^{\vee \vee}$ be the double dual of $E$ and set $Q \coloneqq F/E$. Denote by $T$ the torsion part of $Q$ supported in codimension $\geq \ell+1$, and take $G \subset F$ such that $G/E \cong T$. Then $\depth_x(Q/T) \geq 1$ for every $x \in X$ such that $\dim(\sheaf{O}_{X,x})\geq \ell + 1$. From the short exact sequence
\begin{align*}
    0 \to G \to F \to Q/T \to 0,
\end{align*}
we have by \cite[Tag 00LX]{stacks-project}
\begin{align*}
  \depth_x(G) \geq \min\{\depth_x(F),\depth_x(Q/T) + 1\}
\end{align*}
for every point $x \in X$. But $F$ is reflexive, and so $\depth_x(F) \geq 2$ whenever $\dim(\sheaf{O}_{X,x})\geq 2$. Hence $G$ satisfies \eqref{eq:depthProp}. Also, note that $E$ and $G$ are isomorphic in codimension $\ell$, outside the support of $T$.

It remains to prove $(3)$. Let $Y \subset X$ be a closed subset as in the statement. As $\depth_Y(G) \geq 2$, by a standard result in local cohomology (see \cite[Thm.~3.8]{grothendieck1967local}) we have $\Exts_Y^i(G,G)=0$ for $i=0,1$. This further implies
\begin{align*}
 \Hom_Y(G,G) = \Ext_Y^1(G,G)=0,
\end{align*}
by using the spectral sequence 
\begin{align*}
  E_2^{pq} = H^p(X,\Exts_Y^q(G,G)) \Rightarrow \Ext_Y^{p+q}(G,G)
\end{align*}
given in \cite[VI, Thm.~1.6]{sga2}. From the exact sequence (see \cite[VI, Cor.~1.9]{sga2})
\begin{align*}
 \Hom_{Y}(G,G) \to \Hom(G,G) \to \Hom(G|_{X \setminus Y},G|_{X \setminus Y})  \to \Ext_{Y}^1(G,G)
\end{align*}
it follows that
\begin{align*}
  \Hom(G,G) \to \Hom(G|_{X \setminus Y},G|_{X \setminus Y}) 
\end{align*}
is an isomorphism.
\end{proof}

\begin{lemma}\label{lemma:openSimple}
Let $S$ be an integral scheme of finite type over $k$ and $\xi \in S$ its generic point. Let $f: Z \to S$ be a smooth projective morphism and $\sheaf{F}$ an $S$-flat family of torsion-free sheaves such that $\sheaf{F}_s$ is $\ell$-simple for every closed point $s \in S$. Then $\sheaf{F}_\xi$ is also $\ell$-simple.
\end{lemma}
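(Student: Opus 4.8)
The plan is to reduce $\ell$-simplicity to ordinary simplicity of the canonical sheaf produced by Lemma~\ref{lemma:simpleDepth}, and then to exploit the openness of simplicity (Lemma~\ref{lemma:openness}) for a flat family over a dense open of $S$. First I would record the following translation, valid for a torsion-free sheaf $E$ on a smooth integral base over any field: if $G = G(E) \supseteq E$ denotes the canonical sheaf of Lemma~\ref{lemma:simpleDepth}, so that $G/E$ is a torsion sheaf supported on a closed set $Z_0$ of codimension $\geq \ell+1$ and $G$ satisfies property~(3), then $E$ is $\ell$-simple if and only if $G$ is simple, i.e.\ $\End(G) \cong k$. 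For the forward implication I would take $Y = Z_0$ in the definition of $\ell$-simplicity: since $E|_{X\setminus Z_0} \cong G|_{X\setminus Z_0}$, one gets $\End(G|_{X\setminus Z_0}) \cong k$, and property~(3) identifies this with $\End(G)$. Conversely, given any closed $Y$ with $\codim(Y,X)\geq \ell+1$, I set $Y' = Y\cup Z_0$; then $\End(E|_{X\setminus Y'}) = \End(G|_{X\setminus Y'}) = \End(G) \cong k$ by property~(3), while the restriction map $\End(E|_{X\setminus Y}) \to \End(E|_{X\setminus Y'})$ is injective because $\sheafHom(E,E)$ is torsion-free and $X\setminus Y'$ is dense in $X\setminus Y$. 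As $\End(E|_{X\setminus Y})$ contains the scalars, it equals $k$, whence $E$ is $\ell$-simple.

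Next I would globalize the assignment $E \mapsto G$ over the base. Starting from $\sheaf{F}$, I form the relative double dual $\sheaf{F}^{\vee\vee}$, the quotient $\sheaf{Q} = \sheaf{F}^{\vee\vee}/\sheaf{F}$, its torsion subsheaf $\sheaf{T}$ supported in relative codimension $\geq \ell+1$, and finally the preimage $\sheaf{G}\subseteq \sheaf{F}^{\vee\vee}$ of $\sheaf{T}$ (so that $\sheaf{G}/\sheaf{F} \cong \sheaf{T}$). By generic flatness together with standard base-change results, after shrinking $S$ to a dense open $S^\circ$ (harmless since we only care about $\xi$) I may assume that all of these sheaves are flat over $S^\circ$ and that their formation commutes with restriction to the fibers $Z_s$ for every $s \in S^\circ$. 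In particular $\sheaf{G}$ is $S^\circ$-flat and $\sheaf{G}_s \cong G(\sheaf{F}_s)$ for all $s\in S^\circ$, including the generic point $s = \xi$.

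Finally I would conclude. By hypothesis each closed fiber $\sheaf{F}_s$ is $\ell$-simple, so the translation above shows each $\sheaf{G}_s$ is simple. Since simplicity is open in flat families by Lemma~\ref{lemma:openness}, the locus of $s\in S^\circ$ with $\sheaf{G}_s$ simple is open and contains every closed point; as $S^\circ$ is of finite type over $k$, hence Jacobson, its closed points are dense, so this open locus is all of $S^\circ$ and in particular contains $\xi$. Thus $\sheaf{G}_\xi \cong G(\sheaf{F}_\xi)$ is simple, and applying the translation once more gives that $\sheaf{F}_\xi$ is $\ell$-simple, as desired.

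I expect the main obstacle to be the globalization step: double dualization and passage to the torsion subsheaf do not commute with arbitrary base change, so the real content is that they do so after restricting to a suitable dense open $S^\circ$. Making this precise requires invoking generic flatness and spreading-out (cohomology-and-base-change) arguments to secure simultaneously the flatness of $\sheaf{G}$ over $S^\circ$ and the fiberwise identification $\sheaf{G}_s \cong G(\sheaf{F}_s)$. By contrast, the purely formal equivalence between $\ell$-simplicity of $E$ and simplicity of $G(E)$, together with the final density argument, are comparatively routine.
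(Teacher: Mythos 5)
Your proposal is correct, and its skeleton is the same as the paper's: replace $\sheaf{F}$ by the canonical modification $\sheaf{G}$ coming from Lemma~\ref{lemma:simpleDepth}, propagate simplicity from the closed fibers to the generic fiber by semicontinuity of relative Ext sheaves (Lemma~\ref{lemma:openness}), and then use property (3) of Lemma~\ref{lemma:simpleDepth} to upgrade simplicity of $\sheaf{G}_\xi$ to $\ell$-simplicity of $\sheaf{F}_\xi$. Where you diverge --- and make life harder than necessary --- is the globalization step. You construct $\sheaf{G}$ by relative operations (relative double dual, relative torsion subsheaf) and then need their formation to commute with passage to \emph{every} fiber over a dense open; this is provable by generic flatness and generic base-change arguments, but it is precisely the technical overhead you flag as the main obstacle. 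The paper goes in the opposite direction: it applies Lemma~\ref{lemma:simpleDepth} only on the generic fiber, producing $\iota : \sheaf{F}_\xi \hookrightarrow \sheaf{G}_\xi \coloneqq G(\sheaf{F}_\xi)$, and then spreads this inclusion out over a dense open of $S$, so the identification at $\xi$ holds by construction and no base-change theorem for duals is ever invoked. At closed points one then knows only that $\sheaf{F}_s$ and $\sheaf{G}_s$ are isomorphic in codimension $\ell$ (achievable after shrinking, since the support of $\sheaf{G}/\sheaf{F}$ has fibers of codimension $\geq \ell+1$), but that already suffices: $\ell$-simplicity is insensitive to modifications in codimension $\geq \ell+1$ --- your own injectivity argument via torsion-freeness of $\sheafHom(E,E)$ proves exactly this --- so each $\sheaf{G}_s$ is $\ell$-simple, hence simple, without knowing $\sheaf{G}_s \cong G(\sheaf{F}_s)$ or that $\sheaf{G}_s$ satisfies property (3). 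In short, your translation lemma and the semicontinuity/density argument are fine; if you spread out from $\xi$ rather than constructing $\sheaf{G}$ relatively, and at closed points use only the codimension-$\ell$ isomorphism, the step you singled out as the real difficulty disappears.
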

\begin{proof}
Let $\iota : \sheaf{F}_\xi \to \sheaf{G}_\xi$ be the inclusion given by Lemma~\ref{lemma:simpleDepth}, in particular $\sheaf{F}_\xi$ and $G_\xi$ are isomorphic in codimension $\ell$. By shrinking $S$ if necessary, we can extend $\iota$ to an inclusion $\sheaf{F} \subset \sheaf{G}$ such that $\sheaf{G}$ is an $S$-flat family and moreover, $\sheaf{F}_s$ and $\sheaf{G}_s$ are isomorphic in codimension $\ell$ for every $s \in S$. Note that $\sheaf{F}_s$ is $\ell$-simple if and only if $\sheaf{G}_s$ is $\ell$-simple, hence it is enough to prove the statement for the family $\sheaf{G}$.

Let $Y_\xi \subset Z_\xi$ be a closed subset of codimension $\geq \ell + 1$ in $Z_\xi$. By Lemma~\ref{lemma:simpleDepth} there is an isomorphism
\begin{align*}
  \Hom(\sheaf{G}_\xi,\sheaf{G}_\xi) \to \Hom(\sheaf{G}_\xi|_{Z_\xi \setminus Y_\xi},\sheaf{G}_\xi|_{Z_\xi \setminus Y_\xi}).
\end{align*}
By the semicontinuity properties of the relative Ext sheaves \cite[Satz~3]{banica80variation}, it follows that $\sheaf{G}_\xi$ is simple (see also Lemma~\ref{lemma:openness}). We conclude that $\sheaf{G}_\xi$ is $\ell$-simple.
\end{proof}

We next recall the existence of the Harder-Narasimhan filtration. The result is standard and we omit its proof.
 
\begin{lemma}
If $E$ is a pure sheaf on $X$, then $E$ has a unique $\ell$-Harder-Narasimhan filtration
\begin{align*}
    0 = E_0 \subset E_1 \subset \ldots \subset E_m = E
\end{align*}
such that its factors $E_i/E_{i-1}$ are $\ell$-semistable and satisfy
\begin{align*}
    p_\ell(E_1) > p_{\ell}(E_2/E_1) > \ldots > p_{\ell}(E/E_{m-1}).
\end{align*}
We call $E/E_{m-1}$ the minimal $\ell$-destabilizing quotient of $E$.
 \end{lemma}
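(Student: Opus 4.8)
The plan is to adapt the classical construction of the Harder--Narasimhan filtration, as in \cite[Sect.~1.3]{huybrechts2010geometry}, to the truncated invariant $p_\ell$; the only structural inputs are the additivity of $P_\ell$ and of $\alpha_d$, a boundedness statement, and the compatibility of the lexicographic order with convex combinations. First I would record the \emph{seesaw property}: for a short exact sequence $0 \to F' \to F \to F'' \to 0$ of $d$-dimensional sheaves, additivity gives $\alpha_d(F)\,p_\ell(F) = \alpha_d(F')\,p_\ell(F') + \alpha_d(F'')\,p_\ell(F'')$, so $p_\ell(F)$ is a convex combination of $p_\ell(F')$ and $p_\ell(F'')$. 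Since the lexicographic order is compatible with convex combinations, $p_\ell(F)$ lies between $p_\ell(F')$ and $p_\ell(F'')$, strictly so when the two differ and both pieces are nonzero. An immediate consequence is the Hom-vanishing statement: if $F,G$ are $\ell$-semistable with $p_\ell(F) > p_\ell(G)$, then $\Hom(F,G)=0$, as the image of any nonzero map would contradict the $\ell$-semistability of one of the two.

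Next I would construct the maximal $\ell$-destabilizing subsheaf $E_1 \subset E$. By Grothendieck's boundedness lemma \cite[Lem.~1.7.9]{huybrechts2010geometry}, the slopes $\alpha_{d-1}(F)/\alpha_d(F)$ of nonzero subsheaves $F \subset E$ are bounded above, and the subsheaves whose slope is close to this bound form a bounded family; within a bounded family only finitely many Hilbert polynomials occur, so the set $\{p_\ell(F)\}$ attains a lexicographic maximum, and among the subsheaves attaining it one of maximal $\alpha_d$ exists. Call such a subsheaf $E_1$. Its uniqueness follows from the sum trick: if $F_1,F_2$ both realized the maximum of the pair $(p_\ell,\alpha_d)$, then the seesaw applied to $0 \to F_1 \cap F_2 \to F_1 \oplus F_2 \to F_1 + F_2 \to 0$ would force $F_1+F_2$ to realize the same maximal $p_\ell$ with strictly larger $\alpha_d$ unless $F_1 = F_2$. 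The same maximality shows that $E_1$ is $\ell$-semistable (a subsheaf of strictly larger $p_\ell$ would be a subsheaf of $E$ contradicting maximality) and saturated (its saturation has the same $\alpha_d$ and $p_\ell$ at least as large, hence equals $E_1$), so that $E/E_1$ is again pure of dimension $d$.

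Finally, I would run the induction on $\alpha_d(E)$. Taking $E_1$ as above and applying the construction to the pure quotient $E/E_1$ yields $E_2/E_1, E_3/E_2, \ldots$; the strict inequality $p_\ell(E_1) > p_\ell(E_2/E_1)$ holds, for otherwise the preimage $E_2$ would satisfy $p_\ell(E_2) \geq p_\ell(E_1)$ by the seesaw and have strictly larger $\alpha_d$, again contradicting the maximality of $E_1$. Uniqueness of the whole filtration reduces to the uniqueness of $E_1$ together with the Hom-vanishing: for any filtration with $\ell$-semistable factors of strictly decreasing $p_\ell$, the composite $E_1 \to E/E_1'$ of the first terms vanishes on all lower factors, giving $E_1 \subseteq E_1'$, while the maximality and saturation of $E_1$ force the reverse inclusion; one then passes to $E/E_1$ and proceeds by induction on the length. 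The one genuinely non-formal step is the boundedness input guaranteeing that the supremum of $p_\ell(F)$ is attained; everything else is a formal consequence of the additivity of $P_\ell$ and $\alpha_d$ and of the compatibility of the lexicographic order with convex combinations, which is precisely why the argument carries over verbatim from the familiar case $\ell = d$.
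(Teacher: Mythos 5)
Your overall route is the standard one (the paper itself omits the proof as standard, and your outline follows \cite[Sect.~1.3]{huybrechts2010geometry}): seesaw for $p_\ell$, Hom-vanishing between $\ell$-semistable sheaves with strictly ordered $p_\ell$, construction of a maximal destabilizing subsheaf, and induction. The seesaw, the Hom-vanishing, the strict inequality $p_\ell(E_1)>p_\ell(E_2/E_1)$, and the uniqueness of the full filtration are all fine. However, there is a genuine gap at the heart of the construction: the uniqueness and saturatedness of $E_1$. Your sum trick claims that if $F_1\neq F_2$ both realize the maximum of $(p_\ell,\alpha_d)$, then $F_1+F_2$ has strictly larger $\alpha_d$. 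From $0\to F_1\cap F_2\to F_1\oplus F_2\to F_1+F_2\to 0$ one only gets $\alpha_d(F_1+F_2)=\alpha_d(F_1)+\alpha_d\bigl(F_2/(F_1\cap F_2)\bigr)$, and the second term can vanish with $F_1\neq F_2$, because $\alpha_d$ and $p_\ell$ are blind to sheaves of dimension $\leq d-\ell-1$. Concretely, take $X=\CP^2$, $\ell=1$, $E=\sheaf{O}\oplus\sheaf{O}(-1)$: both $\sheaf{O}$ and the ideal sheaf $\sheaf{I}_p\subset\sheaf{O}$ of a point realize the maximum of $(p_1,\alpha_2)$, since $p_1$ does not see the length-one quotient $\sheaf{O}/\sheaf{I}_p$. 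For the same reason your parenthetical saturation argument (``its saturation has the same $\alpha_d$ and $p_\ell$ at least as large, hence equals $E_1$'') does not close: the saturation of the pair-maximizer $\sheaf{I}_p$ is $\sheaf{O}\neq\sheaf{I}_p$, and nothing in pair-maximality excludes this. This is not cosmetic: if $E_1$ is not saturated, then $E/E_1$ is not pure (in the example, $E/\sheaf{I}_p\cong \sheaf{O}_p\oplus\sheaf{O}(-1)$), so its factors cannot be $\ell$-semistable in the sense of Definition~2.1 and your induction cannot start. Note that for $\ell=d$ your argument is the classical one and is correct; the failure occurs exactly for $\ell<d$, which is the case this paper is about.

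The repair is small but must be made explicitly. Define $E_1$ as a subsheaf realizing the maximum of $(p_\ell,\alpha_d)$ that is in addition maximal with respect to inclusion, or equivalently saturated: the saturation of any pair-maximizer is again a pair-maximizer (saturation preserves $\alpha_d$ and weakly increases $p_\ell$), and if $F_1,F_2$ are two \emph{saturated} pair-maximizers, the seesaw forces $\alpha_d\bigl((F_1+F_2)/F_1\bigr)=0$, whence $(F_1+F_2)/F_1=0$ by purity of $E/F_1$, so $F_1=F_2$. Relatedly, your boundedness step should be phrased for saturated subsheaves: Grothendieck's lemma \cite[Lem.~1.7.9]{huybrechts2010geometry} bounds the family of \emph{saturated} subsheaves of slope bounded below, whereas arbitrary subsheaves of maximal slope form an unbounded family (again all $\sheaf{I}_Z\subset\sheaf{O}$ in the example above); since saturating only increases $p_\ell$, the supremum over all subsheaves is still attained on this bounded family. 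With these two corrections, the rest of your argument, including the reduction of uniqueness of the filtration to Hom-vanishing plus the uniqueness and saturatedness of $E_1$, goes through.
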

 
We also have a relative version of the Harder-Narasimhan filtration, which will provide a key ingredient in proving Theorem~\ref{thm:GiesekerRestriction}.

\begin{lemma}\label{lemma:relativekHN}
Let $S$ be an integral scheme of finite type over $k$ and $\sheaf{F}$ an $S$-flat family of $\ell$-semistable sheaves on $X$. Then there exists a dense open subset $U \subset S$ and a filtration
\begin{align*}
    0 = \sheaf{F}_0 \subset \sheaf{F}_1 \subset \ldots \subset \sheaf{F}_m = \sheaf{F}
\end{align*}
such that
\begin{enumerate}[nolistsep]
    \item its factors $\sheaf{F}_i/\sheaf{F}_{i-1}$ are flat over U,
    \item for each closed point $s \in U$, the restriction $0 = \sheaf{F}_{0,s} \subset \sheaf{F}_{1,s} \subset \ldots \subset \sheaf{F}_{m,s} = \sheaf{F}_s$ is the $\ell$-Harder-Narasimhan filtration of $\sheaf{F}_s$.
\end{enumerate}
\end{lemma}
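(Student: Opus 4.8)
The plan is to construct the filtration first over the generic point of $S$ and then spread it out over a dense open neighborhood, following the strategy of \cite[Thm.~2.3.2]{huybrechts2010geometry}. Write $\xi$ for the generic point of $S$ and consider the generic fiber $\sheaf{F}_\xi$, a pure sheaf on $X_{\kappa(\xi)} \coloneqq X \times_k \kappa(\xi)$. It carries an $\ell$-Harder--Narasimhan filtration $0 = \sheaf{F}_{0,\xi} \subset \sheaf{F}_{1,\xi} \subset \cdots \subset \sheaf{F}_{m,\xi} = \sheaf{F}_\xi$, whose successive subsheaves are, step by step, the maximal $\ell$-destabilizing subsheaves; being characterized intrinsically over the base field, this filtration is already defined over $\kappa(\xi)$ rather than merely over its algebraic closure.

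Next I would spread this out. Since $\kappa(\xi) = \varinjlim_{\xi \in U} \Gamma(U,\sheaf{O}_S)$, and both coherent sheaves on $X_{\kappa(\xi)}$ and the morphisms between them are defined over some finite level, there is a dense open $U \subset S$ together with subsheaves $\sheaf{F}_i \subset \sheaf{F}|_{X \times U}$ whose restriction to $\xi$ recovers $\sheaf{F}_{i,\xi}$. By generic flatness, after shrinking $U$ I may assume that every $\sheaf{F}_i$ and every quotient $\sheaf{F}_i/\sheaf{F}_{i-1}$ is flat over $U$, which is assertion (1). As each $\sheaf{F}/\sheaf{F}_i$ is then a successive extension of flat factors and hence itself flat over $U$, the formation of each $\sheaf{F}_i$ commutes with base change, so that for every $s \in U$ the map $\sheaf{F}_{i,s} \to \sheaf{F}_s$ is injective and the $\sheaf{F}_{i,s}$ form a genuine filtration of $\sheaf{F}_s$.

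Finally I would verify (2). Since $S$ is integral and each factor is flat over $U$, the Hilbert polynomial of $\sheaf{F}_{i,s}/\sheaf{F}_{i-1,s}$ is independent of $s \in U$ and coincides with that of the $i$-th factor over $\xi$; consequently the strict inequalities $p_{\ell}(\sheaf{F}_{1,s}) > p_{\ell}(\sheaf{F}_{2,s}/\sheaf{F}_{1,s}) > \cdots > p_{\ell}(\sheaf{F}_s/\sheaf{F}_{m-1,s})$ holding at $\xi$ persist at every $s \in U$. To obtain fiberwise $\ell$-semistability of the factors, I apply Lemma~\ref{lemma:openness} to the flat family $\sheaf{F}_i/\sheaf{F}_{i-1}$: the locus where its fiber is $\ell$-semistable is open and contains $\xi$, hence is dense, and intersecting these finitely many dense opens lets me shrink $U$ so that each $\sheaf{F}_{i,s}/\sheaf{F}_{i-1,s}$ is $\ell$-semistable. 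By the uniqueness of the $\ell$-Harder--Narasimhan filtration, the fiberwise filtration $(\sheaf{F}_{i,s})_i$ is then forced to be exactly the $\ell$-Harder--Narasimhan filtration of $\sheaf{F}_s$, giving (2). The main obstacle is precisely this last point: ensuring that the spread-out filtration restricts fiberwise not to some arbitrary filtration with semistable subquotients but to the canonical one. This is where the openness of $\ell$-semistability (Lemma~\ref{lemma:openness}) and the constancy of the truncated Hilbert polynomials in the flat family must be used in tandem, together with the compatibility of the construction with base change secured in the previous step.
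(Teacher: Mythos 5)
Your proof is correct and follows essentially the same route as the paper: the paper's proof is just a citation of \cite[Thm.~2.3.2]{huybrechts2010geometry}, whose argument is precisely the one you reconstruct --- take the $\ell$-Harder--Narasimhan filtration of the generic fibre, spread it out over a dense open subset with flat factors, and then combine constancy of the fibrewise Hilbert polynomials, openness of $\ell$-semistability (Lemma~\ref{lemma:openness}), and uniqueness of the $\ell$-Harder--Narasimhan filtration to identify the restricted filtration at each closed point of $U$. The points you single out as delicate (compatibility with base change via flatness of the quotients, and forcing the fibrewise filtration to be the canonical one) are exactly the ones that matter, and you handle them correctly.
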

\begin{proof}
The proof is very similar to that in \cite[Thm.~2.3.2]{huybrechts2010geometry}.
\end{proof}

If $E$ is already $\ell$-semistable, then $E$ has a so-called Jordan-H\"older filtration
\begin{align*}
    0 = E_0 \subset E_1 \subset \ldots \subset E_m = E
\end{align*}
such that its factors $E_i/E_{i-1}$ are $\ell$-stable with $p_\ell(E_i/E_{i-1})=p_\ell(E)$. This filtration is not unique in general, but nevertheless its graded sheaf $\gr_\ell(E) = \oplus_i{E_i/E_{i-1}}$ is so in codimension $\ell$, i.e. outside a closed subset of codimension $\geq \ell+1$ in $\Supp(E)$. 

As in the case of Gieseker-semistability, $E$ has a unique extended socle, that we describe precisely below. The following is adapted from \cite[Sect.~1.5]{huybrechts2010geometry} and treats the case of $\ell$-stability. A similar discussion for slope-stability can be found in \cite[Sect.~6]{gangopadhyaymehta}. We define the \textit{$\ell$-socle} $\Soc(E)$ of $E$ as the saturation of the sum of all $\ell$-stable subsheaves $F \subset E$ with $p_{\ell}(F) = p_{\ell}(E)$. 

\begin{definition}\label{def:extendedSocle}
The \textit{extended $\ell$-socle} of an $\ell$-semistable sheaf $E$ is the maximal subsheaf $F \subset E$ with $p_{\ell}(F)=p_{\ell}(E)$ such that every graded factor of $\gr_\ell(F)$ is isomorphic with a graded factor of $\gr_\ell(\Soc(E))$ in codimension $\ell$.
\end{definition}

\begin{lemma}\label{lem:algSocle}
Let $E$ be an $\ell$-stable sheaf on $X$ and $k \subset K$ the algebraic closure. Then $E_K = E \otimes_k K$ equals its extended $\ell$-socle.
\end{lemma}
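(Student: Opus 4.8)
The plan is to prove the stronger statement that the $\ell$-socle itself satisfies $\Soc(E_K) = E_K$; since the extended $\ell$-socle (Definition~\ref{def:extendedSocle}) sits between $\Soc(E_K)$ and $E_K$, this immediately gives the claim. The argument has three moving parts: first upgrade $\ell$-stability of $E$ over $k$ to $\ell$-semistability of $E_K$ over $K$, then descend the socle of $E_K$ back to a subsheaf of $E$, and finally use $\ell$-stability to force that descended subsheaf to be all of $E$. For the first part I would invoke geometric semistability: if $E_K$ failed to be $\ell$-semistable, its maximal $\ell$-destabilizing subsheaf would be the unique first step $\sheaf{F}_1 \subset E_K$ of the $\ell$-Harder--Narasimhan filtration, hence canonically attached to $E_K$ and invariant under every automorphism of $K$ over $k$; descending $\sheaf{F}_1$ would contradict the $\ell$-semistability of $E$ (which is immediate from $\ell$-stability). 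This is the exact analogue of the Gieseker case and may be taken over from \cite{huybrechts2010geometry} with only notational changes.

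With $E_K$ now $\ell$-semistable, its $\ell$-socle $\Soc(E_K)$ is defined and nonzero, since the first step of any Jordan--H\"older filtration is a nonzero $\ell$-stable subsheaf $F \subset E_K$ with $p_\ell(F) = p_\ell(E_K)$. The socle is an intrinsic construction out of $E_K$: it is the saturation of the sum of all $\ell$-stable subsheaves of maximal reduced $\ell$-truncation. It is therefore preserved by the action of $\mathrm{Aut}(K/k)$ on $X_K$ and on $E_K = E \otimes_k K$, and by descent it arises as $\Soc(E_K) = (S_0)_K$ for a subsheaf $S_0 \subseteq E$ defined over $k$. Because the Hilbert polynomial is invariant under the flat base change $k \subseteq K$ and saturation commutes with this base change, $S_0$ is a nonzero saturated subsheaf of $E$ with $p_\ell(S_0) = p_\ell(E)$.

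Finally I would invoke $\ell$-stability of $E$ (Definition~\ref{ssDef}). If $S_0$ were a proper subsheaf, then $0 < \alpha_d(S_0) < \alpha_d(E)$ would force the strict inequality $p_\ell(S_0) < p_\ell(E)$, contradicting $p_\ell(S_0) = p_\ell(E)$. Hence $\alpha_d(S_0) = \alpha_d(E)$, and since $E/S_0$ is pure of dimension $d$ or zero with $\alpha_d(E/S_0) = 0$, it must vanish; thus $S_0 = E$ and $\Soc(E_K) = E_K$, which yields the assertion.

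I expect the one genuinely delicate point to be the descent of the socle in the second step. When $k$ is perfect this is plain Galois descent along $\bar k / k$. For imperfect $k$ the extension $\bar k / k$ is not separable, and I would handle it by choosing a finite extension $k'/k$ over which $\Soc(E_K)$ is already defined, passing to a finite Galois closure and replacing the socle by the sum of its conjugates---this sum still has reduced $\ell$-truncation equal to $p_\ell(E)$, because in an $\ell$-semistable sheaf the sum of subsheaves of maximal $p_\ell$ again has maximal $p_\ell$---and then treating the purely inseparable part separately. This is bookkeeping rather than a conceptual obstruction, and it follows the pattern of the corresponding results in \cite{huybrechts2010geometry}.
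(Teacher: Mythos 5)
Your reduction to the stronger claim $\Soc(E_K) = E_K$ is exactly where the argument breaks down: that claim is false in general, and its failure is the entire reason the \emph{extended} $\ell$-socle exists as a notion. The trouble is the purely inseparable part of $\bar{k}/k$, which you defer to the end as ``bookkeeping.'' Over a purely inseparable extension there are no conjugates to sum: $\mathrm{Aut}(K/k)$ fixes the purely inseparable closure of $k$ pointwise, so invariance under automorphisms only descends the socle to $k^{1/p^{\infty}}$, and your proposed fix does nothing for the remaining descent to $k$. Moreover the socle genuinely fails to descend there: in characteristic $p$ an $\ell$-stable sheaf $E$ can have $\End(E)$ a nontrivial purely inseparable field extension of $k$, in which case $\End(E_K) = \End(E) \otimes_k K$ contains nilpotents; such an $E_K$ is a nonsplit iterated extension of a single $\ell$-stable sheaf by itself (in codimension $\ell$), so $\Soc(E_K)$ is a \emph{proper} subsheaf of $E_K$, while the extended $\ell$-socle is all of $E_K$. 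Your argument is correct when $k$ is perfect (in particular in characteristic zero), but the paper cannot restrict to that case: Section~\ref{section:RestrictionTheorems} works in arbitrary characteristic, and Lemma~\ref{lem:geomSt}, which rests on the present lemma, is applied in Lemma~\ref{lemma:flatFam} to $E|_{D_\xi}$ over the function field $k(\Pi_a)$, which is imperfect when $\mathrm{char}(k) = p > 0$.

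The proof the paper points to, \cite[Lem.~6.2]{gangopadhyaymehta} (see also \cite[Sect.~1.5]{huybrechts2010geometry}), descends the extended $\ell$-socle $F \subset E_K$ rather than $\Soc(E_K)$, and the maximality in Definition~\ref{def:extendedSocle} is used precisely to handle inseparable descent: it forces $\Hom(F, E_K/F) = 0$, since a nonzero map would yield a strictly larger subsheaf with the same $p_\ell$ all of whose Jordan--H\"older factors agree, in codimension $\ell$, with factors of $\gr_\ell(\Soc(E_K))$. This Hom-vanishing is exactly the criterion needed for descent along a purely inseparable extension: for any $k$-derivation $D$, the assignment $s \mapsto D(s) \bmod F$ is $\sheaf{O}$-linear on $F$, so it vanishes and $F$ is $D$-invariant. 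The socle fails this criterion in the examples above, because $\Hom(S,S) \neq 0$ for a nonsplit self-extension of a stable sheaf $S$. Once $F$ is descended to a nonzero subsheaf $F_0 \subset E$ with $p_\ell(F_0) = p_\ell(E)$, your final step (stability of $E$ forces $F_0 = E$, hence $F = E_K$) goes through unchanged; it is the middle of your proof, not the ends, that has to be replaced.
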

\begin{proof}
The proof is identical to that of \cite[Lem.~6.2]{gangopadhyaymehta}.
\end{proof}

\begin{lemma}\label{lem:extendedSocle}
Let $E$ be a coherent sheaf on $X$. If $E$ is $\ell$-simple, $\ell$-semistable and equals its extended $\ell$-socle, then $E$ is $\ell$-stable.
\end{lemma}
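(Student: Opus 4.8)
The plan is to argue by contradiction. Suppose $E$ is $\ell$-semistable and coincides with its extended $\ell$-socle, yet is \emph{not} $\ell$-stable; then I will exhibit a non-scalar endomorphism of $E$ in codimension $\ell$, contradicting $\ell$-simplicity. The model case to keep in mind is a non-split self-extension $0 \to V \to E \to V \to 0$ of an $\ell$-stable sheaf $V$, where the composite of the quotient $E \twoheadrightarrow V$ with the inclusion $V \hookrightarrow E$ is a non-zero, non-invertible endomorphism; the extended-socle hypothesis is exactly what lets me build such a composite in general. First I would take the $\ell$-Jordan--H\"older filtration $0 = E_0 \subset \cdots \subset E_m = E$, which now has $m \geq 2$ $\ell$-stable factors of reduced truncation $p_\ell(E)$. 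Writing $\bar V := E/E_{m-1}$ for the top factor, I record the canonical surjection $\pi : E \twoheadrightarrow \bar V$ together with the inequality $\alpha_d(\bar V) < \alpha_d(E)$, which holds because $\alpha_d$ is additive along the filtration and at least one lower factor contributes positively.

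Next I would feed in the hypothesis that $E$ equals its extended $\ell$-socle: the factor $\bar V$ of $\gr_\ell(E)$ must then be isomorphic, in codimension $\ell$, to a graded factor of $\gr_\ell(\Soc(E))$. Since $\Soc(E)$ is a sum of $\ell$-stable subsheaves of $E$ all of reduced truncation $p_\ell(E)$, it is $\ell$-polystable in codimension $\ell$, so each of its graded factors is realised, in codimension $\ell$, by an honest $\ell$-stable subsheaf of $E$. Concretely this produces a closed subset $Y \subset X$ with $\codim(\Supp(E) \cap Y, \Supp(E)) \geq \ell + 1$, an $\ell$-stable subsheaf $\iota : V \hookrightarrow E$, and an isomorphism $g : \bar V|_{X \setminus Y} \xrightarrow{\sim} V|_{X \setminus Y}$, all living over the single open set $X \setminus Y$.

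Then I would set $\phi := \iota \circ g \circ \pi$, an endomorphism of $E|_{X \setminus Y}$. It is non-zero, being the composite of a surjection, an isomorphism, and an injection; and it is not invertible, since its image is contained in $V|_{X \setminus Y}$ while $\alpha_d(V) = \alpha_d(\bar V) < \alpha_d(E)$, the first equality holding because an isomorphism in codimension $\ell$ preserves the leading Hilbert coefficient. Hence $\phi$ is not a scalar multiple of the identity, contradicting $\End(E|_{X \setminus Y}) \cong k$ coming from $\ell$-simplicity. This contradiction forces $E$ to be $\ell$-stable.

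The step I expect to be the main obstacle is the second one: turning the abstract graded factor $\bar V$ of $\gr_\ell(E)$ into a genuine $\ell$-stable \emph{subsheaf} $V \subseteq E$, so that $\pi$ and $\iota$ can be composed as endomorphisms over one common complement $X \setminus Y$. This requires the polystability of $\Soc(E)$ in codimension $\ell$ and a careful check that every comparison implicit in the definition of the extended socle---together with the saturation used to define $\Soc(E)$ and the inequality $\alpha_d(V) < \alpha_d(E)$---survives restriction to the complement of a single closed subset of codimension $\geq \ell + 1$. Once this bookkeeping is in place, the rest of the argument is formal.
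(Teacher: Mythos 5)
Your proof is correct and takes essentially the same route as the paper: the paper likewise starts from an $\ell$-stable quotient $E \to F$ with $p_\ell(F)=p_\ell(E)$ (your top Jordan--H\"older factor $\bar V$), uses the extended-socle hypothesis to realize it as a subsheaf of $E$ over an open set whose complement has codimension $\geq \ell+1$, forms the composite endomorphism there, and contradicts $\ell$-simplicity. Your write-up just makes explicit two points the paper leaves implicit, namely the polystability-in-codimension-$\ell$ of $\Soc(E)$ used to produce the honest subsheaf $V$, and the $\alpha_d$-comparison showing the composite is non-invertible and hence non-scalar.
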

\begin{proof}
Suppose that $E$ is not $\ell$-stable, so there exists an $\ell$-stable quotient $E \to F$ with $p_\ell(F) = p_\ell(E)$. As $E$ equals its extended $\ell$-socle, there is an open subset $U \subset \Supp(E)$ with $\codim(\Supp(E) \setminus U, \Supp(E)) \geq \ell+1$ such that $F|_U \subset E|_U$. This gives a non-trivial composition $E|_U \to F|_U \to E|_U$. But $E$ is $\ell$-simple, so $F|_U \cong E|_U$, which yields a contradiction.
\end{proof}

\begin{lemma}\label{lem:geomSt}
If $E$ is $\ell$-simple, then $E$ is $\ell$-stable if only if $E$ is geometrically $\ell$-stable.
\end{lemma}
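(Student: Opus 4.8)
The plan is to prove both implications of the equivalence, using the structural results about the extended $\ell$-socle that were just established.

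\textbf{The forward direction} ($\ell$-stable $\Rightarrow$ geometrically $\ell$-stable). Let $k \subset K$ be the algebraic closure and set $E_K = E \otimes_k K$ over $X_K$. First I would observe that $\ell$-semistability descends from the algebraic closure, so $E_K$ is $\ell$-semistable because $E$ is; this follows from the fact that any destabilizing subsheaf over $X_K$ is defined over a finite extension, and one can take Galois-conjugates or sums to descend it. Next, since $E$ is $\ell$-simple and $\ell$-stable (stable implies simple implies $\ell$-simple by the earlier remark, and $\ell$-stable is $\ell$-semistable), Lemma~\ref{lem:algSocle} tells us that $E_K$ equals its own extended $\ell$-socle. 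I also need that $\ell$-simplicity is preserved under base change to the algebraic closure: here I would invoke the characterization from Lemma~\ref{lemma:simpleDepth}, since the relevant $\Hom$-isomorphism and the depth condition~\eqref{eq:depthProp} are stable under the flat base change $\Spec K \to \Spec k$. Having established that $E_K$ is $\ell$-simple, $\ell$-semistable, and equals its extended $\ell$-socle, Lemma~\ref{lem:extendedSocle} applies directly to conclude that $E_K$ is $\ell$-stable. This gives geometric $\ell$-stability.

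\textbf{The reverse direction} ($E$ geometrically $\ell$-stable $\Rightarrow$ $E$ $\ell$-stable) is the easy implication and essentially formal: if $E_K$ is $\ell$-stable over $X_K$, then for any destabilizing subsheaf $F \subset E$ with $0 < \alpha_d(F) < \alpha_d(E)$ and $p_\ell(F) \geq p_\ell(E)$, the base change $F_K \subset E_K$ would violate $\ell$-stability of $E_K$, since Hilbert polynomials and the invariants $\alpha_{d-j}$ are unchanged under the flat base change to $K$. Hence no such $F$ exists, and $E$ is $\ell$-stable. Note that here I do not even need the $\ell$-simplicity hypothesis.

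\textbf{The main obstacle} I expect to be the careful verification that the hypotheses of Lemma~\ref{lem:extendedSocle} genuinely transfer to $E_K$, in particular that $\ell$-simplicity is preserved under base field extension to the algebraic closure. The subtlety is that $\ell$-simplicity is not merely the condition $\End(E) \cong k$, but the stronger condition on endomorphisms away from every closed subset of codimension $\geq \ell+1$; one must check that the depth property~\eqref{eq:depthProp} and the associated local-cohomology vanishing behave well under the non-finite extension $k \subset K$. I would handle this by appealing to flat base change for the relevant $\Exts$-sheaves and $\Ext$-groups, reducing to the case of a finite extension where all cohomological invariants are preserved, and then passing to the limit. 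Once this transfer is secured, the two lemmas combine cleanly and the proof is short.
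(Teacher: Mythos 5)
Your proposal is correct and takes essentially the same route as the paper, whose entire proof is the one-line remark that the statement follows from Lemma~\ref{lem:algSocle} and Lemma~\ref{lem:extendedSocle}; your writeup simply makes explicit the base-change steps (ascent of $\ell$-semistability and $\ell$-simplicity to $E_K$, and the easy descent direction) that the paper treats as immediate.
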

\begin{proof}
This follows immediately from Lemma~\ref{lem:algSocle} and Lemma~\ref{lem:extendedSocle}. 
\end{proof}

\subsection*{The Hilbert polynomial}

Below we relate the coefficients of the Hilbert polynomial of a sheaf $E$ to those of its restriction $E|_D$ to a divisor. For $a > 0$, let $\Pi_a \coloneqq |\sheaf{O}_X(a)|$ be the linear system of hypersurfaces of degree $a$ on $X$. For a pure sheaf $E$ of dimension $d$ on $X$, consider the short exact sequence
\begin{align*}
    0 \to E(-a) \to E \to E|_D \to 0,  
\end{align*}
for a divisor $D \in \Pi_a$ avoiding the associated points of $E$. By the additivity of the Hilbert polynomial in short exact sequences we obtain
\begin{align}\label{eq:alphaComputations}
    \alpha_{d-j}(E|_D) ={}& \alpha_{d-j}(E) - \alpha_{d-j}(E(-a)) \notag \\
    ={}& \sum_{l=1}^{j} \frac{(-1)^{l-1}a^l}{l!}\alpha_{d-j+l}(E)
\end{align}
for $1 \leq j \leq d$. 

\begin{lemma}\label{cor:destabilizing}
Let $E$ and $F$ be pure sheaves of dimension $d$ on $X$ and $D \in \Pi_a$ a divisor such that $E|_D$ and $F|_D$ are still pure. If $\ell < d$, then $p_{\ell}(E|_D) \geq p_{\ell}(F|_D)$ (resp.~$>$) if and only if $p_{\ell}(E) \geq p_{\ell}(F)$ (resp.~$>$).
\end{lemma}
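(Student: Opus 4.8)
The plan is to reduce the comparison of reduced $\ell$-truncated Hilbert polynomials to a comparison of their normalized coefficient vectors, and then to show that passing from $E$ to $E|_D$ transforms these vectors by one fixed unipotent lower-triangular substitution depending only on $a$, so that the lexicographic order is preserved.

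First I would record the relevant coefficients. Write $r_i(E) \coloneqq \alpha_{d-i}(E)/\alpha_d(E)$ for $0 \le i \le \ell$ (so $r_0(E) = 1$); up to the fixed positive factorial weights these are the coefficients of $p_\ell(E)$. Since $E|_D$ is pure of dimension $d-1$, its reduced $\ell$-truncation has coefficient vector $s_i(E|_D) \coloneqq \alpha_{(d-1)-i}(E|_D)/\alpha_{d-1}(E|_D)$ for $1 \le i \le \ell$; this is exactly where the hypothesis $\ell < d$ enters, guaranteeing $(d-1)-\ell \ge 0$ so that all these coefficients exist. Applying \eqref{eq:alphaComputations} with $j = i+1$, together with $\alpha_{d-1}(E|_D) = a\,\alpha_d(E)$, a direct substitution and the reindexing $m = i+1-l$ give
\[
 s_i(E|_D) = \sum_{m=0}^{i} \frac{(-1)^{i-m}a^{i-m}}{(i+1-m)!}\, r_m(E) = r_i(E) + \sum_{m=0}^{i-1} \frac{(-1)^{i-m}a^{i-m}}{(i+1-m)!}\, r_m(E).
\]

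The crucial point is the shape of this relation: $s_i(E|_D)$ equals $r_i(E)$ plus a correction involving only the lower-index quantities $r_0(E),\ldots,r_{i-1}(E)$, with coefficients depending only on $a$ and the indices, never on the sheaf. In other words, the map $(r_1,\ldots,r_\ell)\mapsto(s_1,\ldots,s_\ell)$ is one and the same unipotent lower-triangular affine substitution for both $E$ and $F$. I would then finish with the following elementary observation about such substitutions. Suppose the vectors for $E$ and $F$ first differ at index $i$, i.e.\ $r_j(E)=r_j(F)$ for $j<i$ and $r_i(E)\ne r_i(F)$ (the case of equal vectors being immediate). Then inductively $s_j(E|_D)=s_j(F|_D)$ for $j<i$, since each $s_j$ is built only from the agreeing $r_0,\ldots,r_j$; and at index $i$ the correction terms cancel in the difference, leaving
\[
 s_i(E|_D) - s_i(F|_D) = r_i(E) - r_i(F).
\]
Hence the coefficient vectors of $E$ and $F$ first differ at the same position, and with the same sign, as those of $E|_D$ and $F|_D$.

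Consequently the lexicographic comparison of $p_\ell(E)$ and $p_\ell(F)$ agrees with that of $p_\ell(E|_D)$ and $p_\ell(F|_D)$, yielding both the weak ($\ge$) and the strict ($>$) equivalences simultaneously. I expect the only delicate point to be the bookkeeping in the displayed identity: checking that the diagonal term is exactly $r_i(E)$ with coefficient $1$, so that the substitution is genuinely unipotent, and confirming that $\alpha_d(E)$ and $\alpha_{d-1}(E|_D)$ are positive so that all normalizations are legitimate — both of which follow from purity together with $a>0$.
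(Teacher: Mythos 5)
Your proof is correct and follows essentially the same route as the paper: the paper's (one-line) proof says the inequalities ``follow in an iterative manner from \eqref{eq:alphaComputations}'', and your unipotent lower-triangular substitution $(r_1,\ldots,r_\ell)\mapsto(s_1,\ldots,s_\ell)$ with the first-difference argument is precisely that iteration made explicit. The bookkeeping in your displayed identity (diagonal coefficient equal to $1$, use of $\alpha_{d-1}(E|_D)=a\,\alpha_d(E)$, and the role of $\ell<d$ in ensuring all coefficients exist) checks out.
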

\begin{proof}
The inequalities follow in an iterative manner from \eqref{eq:alphaComputations}.
\end{proof}

Note that \eqref{eq:alphaComputations} gives us no information about the free coefficient $\alpha_0(E)$, which explains the upper bound $\ell < d$ in the statement above. 

For $1 \leq j \leq d$ and $D \in \Pi_a$, set
\begin{align*}
    \beta_{d-j}(E|_D) \coloneqq \alpha_{d-j}(E|_D) + \frac{a}{2}\alpha_{d-j+1}(E|_D) + \sum_{l=1}^{j-1}\frac{(-1)^{l-1}B_{2l}}{(2l)!}a^{2l}\alpha_{d-j+1+l}(E|_D),
\end{align*}
where $B_{2l}$ denotes the $l$-th Bernoulli number. Recall that the Bernoulli numbers can be defined by the generating power series
\begin{align*}
    Q(x) = \frac{x}{1-e^{-x}} = 1 + \frac{x}{2} + \sum_{l \geq 1}\frac{(-1)^{l-1}B_{2l}}{(2l)!}x^{2l} = 1 + \frac{x}{2} + \frac{x^2}{12} - \frac{x^4}{720} + \ldots
\end{align*}
By using its inverse
\begin{align*}
    Q(x)^{-1} = \frac{1-e^{-x}}{x} =  \sum_{l \geq 1} \frac{(-1)^{l-1}x^{l-1}}{l!},
\end{align*}
one can derive
\begin{align}\label{eq:bernoulliInv}
  \alpha_{d-j}(E|_D) = \sum_{l=1}^{j} \frac{(-1)^{l-1}a^{l-1}}{l!} \beta_{d-j+l-1}(E|_D).
\end{align}
A direct comparison of \eqref{eq:alphaComputations} and \eqref{eq:bernoulliInv} shows that $\beta_{d-j}(E|_D)/a = \alpha_{d-j+1}(E)$ whenever $D$ avoids the associated points of $E$.

\subsection*{Determinant line bundles}

The following discussion will serve only in Section~\ref{sect:moduliSpace}, so the reader may skip this part and come back when needed.

Fix a numerical Grothendieck class $c \in \grothnum{X}$. Let $S$ be a scheme over $k$ and $\sheaf{E}$ an $S$-flat family of coherent sheaves of class $c$ on $X$. Consider the map $\lambda_\sheaf{E} : K(X) \to \Pic(S)$ given by the composition:
    \begin{align}\label{eq:determinant}
        K(X) \xrightarrow{p_2^*} K^0(S \times X) \xrightarrow{\cdot [\sheaf{E}]} K^0(S \times X) \xrightarrow{{p_1}_{!}} K^0(S) \xrightarrow{\det} \Pic(S),
    \end{align}
which associates to a class $u \in K(X)$ its corresponding determinant line bundle $\lambda_\sheaf{E}(u)$ over $S$. One notices that this construction behaves well with respect to base change. We refer the reader to \cite{le1996module, le1992fibre} for an account of the properties of $\lambda_\sheaf{E}$. 

The Grothendieck group $K(X)$ is endowed with a natural quadratic form given by the Euler characteristic $\chi$. If $A \subset K(X)$ is a subset, let $A^\perp \subset K(X)$ be the orthogonal complement of $A$ with respect to $\chi$. Let $h$ denote the class of $\sheaf{O}_X(1)$ in $K(X)$. If $u \in K(X)$ is a class such that $u \in c^\perp \cap \{1,h,\ldots,h^n\}^{\perp \perp}$ then there is a natural determinant line bundle $\lambda(u)$ over the moduli space $M^{Gss}(c)$ of Gieseker-semistable sheaves of class $c$ on $X$ (see \cite[Thm.~8.1.5]{huybrechts2010geometry}). 

\subsection*{$G$-properness and Iitaka $G$-fibration}

In this part we work over the category $\schC$ of schemes of finite type over $\C$. Fix $G$ a connected algebraic group over $\C$.

\begin{definition}\label{def:Gproper}
Let $R$ be a $G$-scheme and $f : R \to S$ a $G$-invariant separated morphism. We say that $f$ is $G$-\textit{proper} if for every commutative diagram
\begin{center}
\begin{tikzcd}[column sep=normal]
\Spec(K) \ar[d,"j"] \ar[r,"g"] &  R \ar[d,"f"] \\  \Spec(A) \ar[ru,dashed,"i"] \ar[r] & S
\end{tikzcd}
\end{center}
where $A$ is a discrete valuation ring over $\C$ of quotient field $K$, there exists a morphism $i : \Spec(A) \to R$ such that $i \circ j$ and $g$ differ by a group element in $G(K)$.
\end{definition}

Let $R$ be a $G$-proper scheme over $\C$ endowed with a $G$-equivariant line bundle $\sheaf{L}$ that is $G$-semiample, i.e. $\sheaf{L}$ is globally generated by $G$-invariant sections. If $G$ is reductive, then one can form the Iitaka $G$-fibration of $(R,\sheaf{L})$ as in \cite[Sect.~5]{pavel2021moduli}, which is given by a Proj scheme
\begin{align*}
\Proj \bigoplus_{k\geq 0} H^0(R,\sheaf{L}^{ke})^G
\end{align*}
for some $e > 0$. In Section~\ref{sect:moduliSpace}, we will use the Iitaka $G$-fibration to construct a moduli space of sheaves.

\section{Restriction theorems for $\ell$-semistability}\label{section:RestrictionTheorems}

In this section we assume that the base field $k$ is algebraically closed of arbitrary characteristic. We will show both restriction theorems by following the methods of Mehta and Ramanathan. For this reason, in some situations the proofs follow a similar pattern. Whenever this happens we only sketch the arguments and refer the reader to \cite{mehta1982semistable, mehta1984restriction} or \cite[Sect.~7]{huybrechts2010geometry} for further details. We also recommend the lecture notes in \cite{gangopadhyaymehta} for a detailed exposition of the slope-(semi)stable case.

We first show Theorem~\ref{thm:GiesekerRestriction}. The basic idea of the proof is to extend a destabilizing quotient $E|_D \to F_D$ over $X$, since then Lemma~\ref{cor:destabilizing} will give a contradiction. We argue by induction on $\ell$. We already know by the Mehta--Ramanathan restriction theorem that the result holds for $\ell = 1$, so it remains to prove the induction step. 

Fix $1 < \ell < n$ and let $E$ be an $\ell$-semistable torsion-free sheaf on $X$. We want to vary the minimal $\ell$-destabilizing quotient of $E|_D$ in a flat family over $\Pi_a$. For this consider the incidence variety $Z_{a} = \{ ([D],x) \in \Pi_a \times X  | x \in D \}$ with its natural projections:
\begin{center}
\begin{tikzcd}[column sep=normal]
Z_{a} \ar[d,"p"] \ar[r,"q"] &  X \\  \Pi_a
\end{tikzcd}
\end{center}

\begin{remark}\label{rmk:PicDec}
If $\sheaf{K}$ denotes the kernel of the evaluation map
\begin{align*}
    H^0(X,\sheaf{O}_X(a)) \otimes \sheaf{O}_X \to \sheaf{O}_X(a), 
\end{align*}
then $q: Z_{a} \to X$ is in fact isomorphic to the projective bundle $\CP(\sheaf{K}^\vee) \to X$. Also, there exists an isomorphism $\Pic(Z_a) \cong \Pic(X) \times \Pic(\Pi_a)$ such that $\sheaf{O}_{\CP(\sheaf{K}^\vee)}(1) \cong p^*(\sheaf{O}_{\Pi_a}(1))$ (see the proof in \cite[Prop.~2.1]{mehta1982semistable}).
\end{remark}

As a matter of notation, we will denote by $[D] \in \Pi_a$, or simply by $D \in \Pi_a$, the closed point of $\Pi_a$ coressponding to an effective divisor $D \subset X$. Also we will often identify the fibre $p^{-1}([D])$ with $D \subset X$ via $q$.

By Lemma~\ref{lemma:relativekHN} there exists a relative $\ell$-Harder-Narasimhan filtration of $q^*E$, which one can use to prove the following result:

\begin{lemma}\label{lemma:relativeHN}
For $a > 0$ there exists a dense open subset $U_a \subset \Pi_a$ and a quotient $q^*E \to \sheaf{F}_a$ such that
\begin{enumerate}[(1),nolistsep]
    \item each divisor $D \in U_a$ is smooth and integral,
    \item $\sheaf{F}_a$ is flat over $U_a$,
    \item for $[D] \in U_a$, $E|_D \to \sheaf{F}_a|_D$ is the minimal $\ell$-destabilizing quotient of $E|_D$,
    \item for $a \gg 0$, $E|_D$ is $(\ell-1)$-semistable.
\end{enumerate}
\end{lemma}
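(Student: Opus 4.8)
The plan is to use the relative $\ell$-Harder--Narasimhan filtration from Lemma~\ref{lemma:relativekHN} applied to the family $q^*E$ over the base $\Pi_a$, combined with the Mehta--Ramanathan restriction theorem in its generic form. First I would fix $a$ and restrict attention to the dense open subset $\Pi_a^\circ \subset \Pi_a$ parametrizing smooth integral divisors $D$; by Bertini this is nonempty for $a$ large (and in fact for all $a>0$ in characteristic zero, with appropriate care in positive characteristic), giving condition $(1)$. On this locus the restrictions $E|_D$ are torsion-free, hence pure of dimension $d-1 = n-1 > \ell$, so their $\ell$-truncated Hilbert polynomials and $\ell$-HN filtrations are defined. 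I would then apply Lemma~\ref{lemma:relativekHN} to the $\Pi_a^\circ$-flat family $q^*E|_{p^{-1}(\Pi_a^\circ)}$ (after checking this is indeed flat with the right fibres, using that $q$ is a projective bundle as in Remark~\ref{rmk:PicDec}), obtaining a dense open $U_a \subset \Pi_a^\circ$ and a filtration whose fibrewise restriction over each closed point is the $\ell$-HN filtration of $E|_D$. Taking $\sheaf{F}_a$ to be the \emph{bottom} quotient of this relative filtration, i.e.\ the term realizing the minimal $\ell$-destabilizing quotient, yields a quotient $q^*E \to \sheaf{F}_a$ that is flat over $U_a$ by part $(1)$ of that lemma, establishing $(2)$, while part $(2)$ of that lemma gives exactly $(3)$.

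The genuinely new content is part $(4)$, and this is where I expect the main difficulty to lie. The assertion is that for $a \gg 0$ the restriction $E|_D$ is $(\ell-1)$-semistable for general $D$. Here I would invoke the induction hypothesis: since $E$ is $\ell$-semistable it is in particular $(\ell-1)$-semistable, so by the inductive form of Theorem~\ref{thm:GiesekerRestriction} (already proved for the smaller truncation level $\ell-1 < n$), the restriction $E|_D$ to a general divisor of large degree remains $(\ell-1)$-semistable. The subtlety is uniformity across the family and the interaction between the open set on which the induction hypothesis holds and the open set $U_a$ produced above: one must arrange a single dense open subset of $\Pi_a$ on which all of $(1)$--$(4)$ hold simultaneously, which is achieved by intersecting the relevant dense opens and shrinking $U_a$ accordingly.

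A point requiring care is the translation of stability data between $E$ and $E|_D$ via Lemma~\ref{cor:destabilizing}: since $\ell < n = d$, the coefficients $\alpha_{d-j}(E|_D)$ for $1 \le j \le \ell$ are governed by those of $E$ through \eqref{eq:alphaComputations}, so the comparison of reduced $\ell$-truncated Hilbert polynomials is compatible with restriction. This ensures that "minimal $\ell$-destabilizing quotient of $E|_D$" is a meaningful fibrewise notion whose Hilbert polynomial is constant over $U_a$, which is precisely what flatness of $\sheaf{F}_a$ requires. I would verify that the HN polygon of $E|_D$ has bounded combinatorial type as $D$ and $a$ vary, so that the relative filtration has constant length $m$ and constant factor Hilbert polynomials over a dense open, allowing the single quotient $\sheaf{F}_a$ to be extracted.

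I expect the main obstacle to be part $(4)$ together with the compatibility of the various dense open loci: one must show not merely that \emph{some} $E|_D$ is $(\ell-1)$-semistable, but that the $(\ell-1)$-semistable locus, the smooth-integral locus, and the locus where the relative $\ell$-HN filtration specializes correctly all share a common dense open subset of $\Pi_a$ for $a \gg 0$. Since each is dense open and $\Pi_a$ is irreducible, their intersection is again dense open, but one must be careful that the threshold "$a \gg 0$" from the induction hypothesis is compatible with the degree needed for the relative filtration to exist; taking $a$ larger than both thresholds resolves this.
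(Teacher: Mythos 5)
Your proposal is correct and takes essentially the same route as the paper: parts (2) and (3) come from the relative $\ell$-Harder--Narasimhan filtration of Lemma~\ref{lemma:relativekHN} applied to $q^*E$ (with $\sheaf{F}_a$ the final quotient), part (4) is exactly the induction hypothesis on $\ell$ applied to $E$ (which, being $\ell$-semistable, is in particular $(\ell-1)$-semistable), and part (1) is Bertini, with the dense opens intersected at the end. The paper's proof is just a terser version of this, so your elaborations (flatness of $q^*E$ over $\Pi_a$, constancy of the HN type over a dense open) are consistent refinements rather than deviations.
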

\begin{proof}
$(2)$ and $(3)$ are direct consequences of Lemma~\ref{lemma:relativekHN}. We may also assume by induction that $E|_D$ is $(\ell-1)$-semistable for general $[D] \in \Pi_a$ of sufficiently large degree. Finally $(1)$ holds by Bertini's Theorem.
\end{proof}

Since $\sheaf{F}_a$ is flat over $U_a$, the Hilbert polynomial of $\sheaf{F}_a|_D$ does not vary with $D \in U_a$. Hence we set $P(a) \coloneqq P(\sheaf{F}_a|_D)$,  $\rk(a)\coloneqq \rk(\sheaf{F}_a|_D)$, $\alpha_{n-j}(a) \coloneqq \alpha_{n-j}(\sheaf{F}_a|_D)$ and so forth.

\begin{remark}\label{rmk:betaCt}
Since $E|_D$ is already $(\ell-1)$-semistable, we have $p_{\ell-1}(a)=p_{\ell-1}(E|_D)$ for $a\gg 0$. Then
\begin{align*}
    \frac{\beta_{n-j}(a)}{a\rk(a)} = \frac{\beta_{n-j}(E|_D)}{a\rk(E|_D)} = \frac{\alpha_{n-j+1}(E)}{\rk(E)}
\end{align*}
is constant for any $1 \leq j \leq \ell$ and $a \gg 0$.
\end{remark}

Let $D = D_1 + D_2$ be a simple normal crossing divisor of degree $a = a_1 + a_2$ with $[D_1] \in U_{a_1}$ and $[D_2] \in U_{a_2}$. Then there exists a smooth (locally closed) curve $C \subset \Pi_a$ passing through $[D]$ such that $C \setminus [D] \subset U_a$ and $Z_C \coloneqq Z_{a} \times_{\Pi_a} C$ is smooth in codimension 2 (see \cite[Lem.~7.2.3]{huybrechts2010geometry}). Restricting $q^*E \to \sheaf{F}_a$ to $Z_{C \setminus [D]}$ gives a quotient $q^*E|_{Z_{C \setminus [D]}} \to \sheaf{F}_a|_{Z_{C \setminus [D]}}$, which induces a morphism of schemes
\begin{align*}
    C \setminus [D] \to \Quot_{Z_{a}/\Pi_a}(q^*E,P(a))
\end{align*}
As $C$ is a smooth curve and the Quot scheme is projective, we can extend the above morphism over $[D]$ and get a quotient $q^*E|_{Z_C} \to \sheaf{F}_C$. 

In what follows $i=1,2$. Let $F \coloneqq (\sheaf{F}_C|_D)/T(\sheaf{F}_C|_D)$, $F_i \coloneqq F|_{D_i}$ and denote by $T_i \coloneqq T(F_i)$ the torsion of $F_i$. For $1 \leq j \leq \ell + 1$, consider the following property $(P_j)$: 
\begin{enumerate}[leftmargin=*]
    \item for $a \gg 0$, $T(\sheaf{F}_C|_D)$ is supported in codimension $\geq j$ in $D$ and
    \begin{align*}
        \frac{\alpha_{n-j}(F)}{\rk(F)}  = \frac{\alpha_{n-j}(a)}{\rk(a)},
    \end{align*}
    \item for $a_i \gg 0$, $T_i$ is supported in codimension $\geq j$ in $D_i$ and
    \begin{align*}
        \frac{\alpha_{n-j}(F_i/T_i)}{\rk(F_i/T_i)}  = \frac{\alpha_{n-j}(a_i)}{\rk(a_i)}.
    \end{align*}
\end{enumerate}

We will show by induction that $(P_j)$ holds true for $1 \leq j \leq \ell + 1$. First note that $\alpha_{n-1}(F) = \alpha_{n-1}(\sheaf{F}_C|_D) = \alpha_{n-1}(a)$ by the flatness of $\sheaf{F}_C$. The second part of $(P_1)$ is also clearly satisfied. We next show the induction step, whose proof we divided in a sequence of lemmas. In what follows, let $j > 1$ and suppose that $(P_1), \ldots, (P_{j-1})$ hold true.

\begin{lemma}\label{lemma:alphaineq}
For $a_1, a_2 \gg 0$ we have
\begin{align}\label{eq:gammaineq}
    \frac{\alpha_{n-j}(a)}{\rk(a)} \geq {}& \frac{\alpha_{n-j}(F_1/T_1)}{\rk(F_1/T_1)} + \frac{\alpha_{n-j}(F_2/T_2)}{\rk(F_2/T_2)} - \notag\\
    {}& \sum_{l=1}^{j-1} \frac{(-1)^{l-1}}{2l!} \left(a_2^l\frac{\alpha_{n-j+l}(F_1/T_1)}{\rk(F_1/T_1)} + a_1^l\frac{\alpha_{n-j+l}(F_2/T_2)}{\rk(F_2/T_2)}\right).
\end{align}
\end{lemma}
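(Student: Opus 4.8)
The plan is to compute $\alpha_{n-j}(F)$ by restricting $F$ to the two components of $D = D_1 + D_2$, and then to estimate $\alpha_{n-j}(a)$ from below using the inductive hypotheses $(P_1),\dots,(P_{j-1})$ to discard torsion contributions with the correct sign. First I would set up two short exact sequences relating $F$ to $F_1$ and $F_2$. Since $D = D_1 + D_2$, tensoring the structure sequence $0 \to \sheaf{O}_X(-D_1) \to \sheaf{O}_X \to \sheaf{O}_{D_1} \to 0$ by the line bundle $\sheaf{O}_X(-D_2)$ identifies the ideal sheaf of $D_2$ inside $D$ with $\sheaf{O}_{D_1}(-a_2)$. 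Multiplying $F$ by this ideal gives a surjection $F_1(-a_2) = F \otimes \sheaf{O}_{D_1}(-a_2) \twoheadrightarrow \Ker(F \to F_2)$; as $F$ is pure of dimension $n-1$, its subsheaf $\Ker(F \to F_2)$ is torsion-free and supported on the integral divisor $D_1$, so this surjection factors through $(F_1/T_1)(-a_2)$ and is an isomorphism (a surjection of torsion-free sheaves of equal generic rank on the integral scheme $D_1$). This yields
\[
0 \to (F_1/T_1)(-a_2) \to F \to F_2 \to 0, \qquad 0 \to (F_2/T_2)(-a_1) \to F \to F_1 \to 0,
\]
the second by the symmetric argument.

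Next I would apply additivity of Hilbert polynomials together with the twisting formula
\[
\alpha_{n-j}(G(-b)) = \sum_{l=0}^{j-1}\frac{(-b)^l}{l!}\,\alpha_{n-j+l}(G) \qquad (\dim G = n-1),
\]
derived exactly as in \eqref{eq:alphaComputations}, to each sequence. Writing $\alpha_{n-j}(F_i) = \alpha_{n-j}(F_i/T_i) + \alpha_{n-j}(T_i)$ and averaging the two resulting expressions for $\alpha_{n-j}(F)$ gives
\begin{align*}
\alpha_{n-j}(F) = \alpha_{n-j}(F_1/T_1) + \alpha_{n-j}(F_2/T_2) + \frac{1}{2}\big(\alpha_{n-j}(T_1) + \alpha_{n-j}(T_2)\big) + \frac{1}{2}\sum_{l=1}^{j-1}\frac{(-1)^l}{l!}\Big(a_2^l\,\alpha_{n-j+l}(F_1/T_1) + a_1^l\,\alpha_{n-j+l}(F_2/T_2)\Big).
\end{align*}

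Finally I would feed in the inductive hypothesis. By $(P_{j-1})$, for $a_1, a_2 \gg 0$ the torsion sheaves $T_1, T_2$ are supported in codimension $\geq j-1$ in $D_i$, hence have dimension $\leq n-j$, so $\alpha_{n-j}(T_i) \geq 0$; likewise $T(\sheaf{F}_C|_D)$ has dimension $\leq n-j$, whence $\alpha_{n-j}(a) = \alpha_{n-j}(\sheaf{F}_C|_D) = \alpha_{n-j}(F) + \alpha_{n-j}(T(\sheaf{F}_C|_D)) \geq \alpha_{n-j}(F)$. Dropping the nonnegative terms $\frac{1}{2}(\alpha_{n-j}(T_1)+\alpha_{n-j}(T_2))$, rewriting $(-1)^l = -(-1)^{l-1}$, and dividing by the common generic rank $\rk(a) = \rk(F_1/T_1) = \rk(F_2/T_2)$ (equal by comparing the leading coefficients $\alpha_{n-1}$) yields \eqref{eq:gammaineq}. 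The main obstacle is the clean identification of the kernel as $(F_1/T_1)(-a_2)$ — i.e. the ideal-sheaf computation $\sheaf{I}_{D_2/D} \cong \sheaf{O}_{D_1}(-a_2)$ combined with the purity/integrality argument that upgrades the natural surjection to an isomorphism — together with the sign bookkeeping that ensures the discarded torsion contributes in the favorable direction; the codimension bounds from $(P_{j-1})$ are precisely what makes $\alpha_{n-j}(T_i) \geq 0$ and $\alpha_{n-j+l}(T_i) = 0$ for $l \geq 1$, so that only the torsion-free parts $F_i/T_i$ survive in the higher coefficients.
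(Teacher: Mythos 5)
Your proof is correct, and it reaches the paper's key intermediate inequality \eqref{eq:linearcombination} by a genuinely different decomposition. The paper works with the Mayer--Vietoris sequence $0 \to F \to F_1 \oplus F_2 \to F|_{D_1 \cap D_2} \to 0$, splits off the torsion of each $F_i$, controls the correction term by restricting the torsion sequences to $D_1 \cap D_2$ (using $\alpha_{n-j}(T_i) \geq \alpha_{n-j}(T_i|_{D_1\cap D_2})$), and computes $\alpha_{n-j}((F_i/T_i)|_{D_1\cap D_2})$ from the twist sequence $0 \to (F_i/T_i)(-D_{3-i}) \to F_i/T_i \to (F_i/T_i)|_{D_1\cap D_2} \to 0$; this yields a chain of inequalities. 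You instead identify the kernels of the restriction maps outright: from $\sheaf{I}_{D_2/D}\cong\sheaf{O}_{D_1}(-a_2)$ and right-exactness of the tensor product, $\Ker(F \to F_2)$ is the image of $F_1(-a_2)\to F$, and purity of $F$ together with integrality of $D_1$ forces this image to be exactly $(F_1/T_1)(-a_2)$ (the image of the torsion part is a lower-dimensional subsheaf of a pure sheaf, hence zero, and the resulting surjection of torsion-free sheaves of equal generic rank on $D_1$ has torsion kernel, hence is an isomorphism). The two resulting exact sequences give an exact identity for $\alpha_{n-j}(F)$ in which every term you later discard is a manifestly non-negative torsion contribution, namely $\frac{1}{2}\left(\alpha_{n-j}(T_1)+\alpha_{n-j}(T_2)\right)$ and $\alpha_{n-j}(T(\sheaf{F}_C|_D))$, whose non-negativity is exactly what $(P_{j-1})$ provides. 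What your route buys is transparency of the sign bookkeeping (an equality up to explicit error terms, rather than accumulated inequalities) and it avoids any discussion of restricting torsion to $D_1\cap D_2$; what it costs is the kernel identification itself, which is the crux and which you justify correctly.

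One caveat: the final division requires $\rk(a)=\rk(F_1/T_1)=\rk(F_2/T_2)$, and your parenthetical justification ``by comparing the leading coefficients $\alpha_{n-1}$'' is not sufficient on its own. The leading-coefficient comparison only gives $\rk(F_1/T_1)\,a_1 + \rk(F_2/T_2)\,a_2 = \rk(a)(a_1+a_2)$, which does not force equality of the ranks (for instance when $a_1=a_2$ there are other integer solutions). One also needs $\rk(F_i/T_i)\geq \rk(a)$, which follows from upper semicontinuity of the fiber dimension of the $C$-flat sheaf $\sheaf{F}_C$ at the generic points of $D_1$ and $D_2$; the paper sidesteps this by citing \cite[Lem.~2.12]{gangopadhyaymehta}. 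This is a minor gap in an otherwise complete and correct argument.
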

\begin{proof}
Consider the short exact sequence
\begin{align*}
    0 \to \sheaf{O}_D \to \sheaf{O}_{D_1} \oplus \sheaf{O}_{D_2} \to \sheaf{O}_{D_1 \cap D_2} \to 0
\end{align*}
corresponding to the normal crossing divisor $D = D_1 + D_2$. After tensoring with the pure sheaf $F$ the sequence
\begin{align}\label{eq:Fexact}
    0 \to F \to F_1 \oplus F_2 \to F|_{D_1 \cap D_2} \to 0
\end{align}
remains exact. We get by additivity
\begin{align*}
    \alpha_{n-j}(F) = \alpha_{n-j}(F_1) + \alpha_{n-j}(F_2) - \alpha_{n-j}(F|_{D_1 \cap D_2}).
\end{align*}
From the short exact sequence
\begin{align}\label{eq:shortTorsion}
    0 \to T_i \to F_i \to F_i/T_i\to 0,
\end{align}
we also have
\begin{align*}
    \alpha_{n-j}(F_i) = \alpha_{n-j}(T_i) + \alpha_{n-j}(F_i/T_i).
\end{align*}
As $F_i/T_i$ has no torsion over $D_1 \cap D_2$, the restriction of \eqref{eq:shortTorsion} to $D_1 \cap D_2$ gives the short exact sequence
\begin{align*}
    0 \to T_i|_{D_1 \cap D_2} \to F|_{D_1 \cap D_2} \to (F_i/T_i)|_{D_1 \cap D_2} \to 0.
\end{align*}
Thus
\begin{align*}
    \alpha_{n-j}(F|_{D_1 \cap D_2}) = \alpha_{n-j}(T_i|_{D_1 \cap D_2}) + \alpha_{n-j}((F_i/T_i)|_{D_1 \cap D_2}).
\end{align*}
Putting together the above identities and using $\alpha_{n-j}(T_i) \geq \alpha_{n-j}(T_i|_{D_1 \cap D_2})$, we get
\begin{align}\label{eq:geq2}
    \alpha_{n-j}(F) \geq{}& \alpha_{n-j}(F_1) + \alpha_{n-j}(F_2/T_2) - \alpha_{n-j}((F_2/T_2)|_{D_1 \cap D_2})\notag\\
    \geq{}& \alpha_{n-j}(F_1/T_1) + \alpha_{n-j}(F_2/T_2) -\alpha_{n-j}((F_2/T_2)|_{D_1 \cap D_2}),
\end{align}
where the last inequality holds since the torsion is supported in codimension $\geq j-1$ by the induction hypothesis.

On the other hand, from the short exact sequence
\begin{align*}
    0 \to (F_2/T_2)(-D_1) \to F_2/T_2 \to (F_2/T_2)|_{D_1 \cap D_2} \to 0,
\end{align*}
it follows by formula \eqref{eq:alphaComputations} that
\begin{align*}
    \alpha_{n-j}((F_2/T_2)|_{D_1 \cap D_2}) = \sum_{l=1}^{j-1} \frac{(-1)^{l-1}a_1^l}{l!}\alpha_{n-j+l}(F_2/T_2).
\end{align*}
From this we can rewrite \eqref{eq:geq2} as
\begin{align*}
    \alpha_{n-j}(F) \geq \alpha_{n-j}(F_1/T_1) + \alpha_{n-j}(F_2/T_2) - \sum_{l=1}^{j-1}  \frac{(-1)^{l-1}a_1^l}{l!}\alpha_{n-j+l}(F_2/T_2).
\end{align*}
By a similar computation, we also have
\begin{align*}
    \alpha_{n-j}(F) \geq \alpha_{n-j}(F_1/T_1) + \alpha_{n-j}(F_2/T_2) - \sum_{l=1}^{j-1}  \frac{(-1)^{l-1}a_2^l}{l!}\alpha_{n-j+l}(F_1/T_1).
\end{align*}
Adding the last two inequalities and dividing by 2 we obtain
\begin{align}\label{eq:linearcombination}
    \alpha_{n-j}(F) \geq {}&\alpha_{n-j}(F_1/T_1) + \alpha_{n-j}(F_2/T_2) - \notag \\
    {}& \sum_{l=1}^{j-1} \frac{(-1)^{l-1}}{2l!} (a_2^l\alpha_{n-j+l}(F_1/T_1) + a_1^l\alpha_{n-j+l}(F_2/T_2)).
\end{align}

By the flatness of $\sheaf{F}_C$ over $C$, we have $\alpha_{n-j}(\sheaf{F}_C|_D) = \alpha_{n-j}(a)$. Moreover, since $\Supp(T(\sheaf{F}_C|_D))$ has codimension $\geq j-1$ in $D$ by $(P_{j - 1})$, we get
\begin{align}\label{eq:betaF}
    \alpha_{n-j}(a) = \alpha_{n-j}(\sheaf{F}_C|_D) \geq \alpha_{n-j}(F).
\end{align}
We also have $\rk(F_i/T_i)=\rk(F)=\rk(a)$ (see \cite[Lem.~2.12]{gangopadhyaymehta}). From these considerations, dividing \eqref{eq:linearcombination} by $\rk(a)$ we finally deduce \eqref{eq:gammaineq}. 
\end{proof}

\begin{lemma}\label{lemma:ineqbetweengamma}
For $a_1, a_2 \gg 0$ we have 
\begin{align*}
    \frac{\beta_{n-j}(a)}{\rk(a)} \geq \frac{\beta_{n-j}(a_1)}{\rk(a_1)} + \frac{\beta_{n-j}(a_2)}{\rk(a_2)}.
\end{align*}
\end{lemma}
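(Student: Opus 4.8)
The plan is to feed the coefficient estimate \eqref{eq:gammaineq} of Lemma~\ref{lemma:alphaineq} into the definition of the $\beta$-coefficients, using the induction hypotheses $(P_1),\ldots,(P_{j-1})$ to control every coefficient in codimension $<j$ and the minimality of the destabilizing quotient to compare $F_i/T_i$ with $\sheaf{F}_{a_i}|_{D_i}$. The guiding principle is that the Bernoulli weights are tuned precisely so that $\beta_{n-j}$ is \emph{additive} under the splitting $D = D_1+D_2$: comparing \eqref{eq:alphaComputations} with \eqref{eq:bernoulliInv} gives $\beta_{n-j}(G)/a = \alpha_{n-j+1}(G')$ whenever $G = G'|_D$ is the restriction of a sheaf $G'$ on $X$ to a divisor avoiding the associated points of $G'$; hence for genuine restrictions $\beta_{n-j}(E|_D) = a\,\alpha_{n-j+1}(E) = \beta_{n-j}(E|_{D_1}) + \beta_{n-j}(E|_{D_2})$, since $a = a_1+a_2$. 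The lemma asserts that this additivity survives, as an inequality, for the limiting quotient.

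I would split the argument in two steps. First I would establish the intermediate inequality
\begin{align*}
  \frac{\beta_{n-j}(a)}{\rk(a)} \geq \frac{\beta_{n-j}(F_1/T_1)}{\rk(F_1/T_1)} + \frac{\beta_{n-j}(F_2/T_2)}{\rk(F_2/T_2)},
\end{align*}
where the two $\beta$'s on the right are formed with the degrees $a_1$ and $a_2$ respectively. Expanding the left-hand side by the definition of $\beta_{n-j}$, the only coefficient not yet determined is the leading one $\alpha_{n-j}(a)/\rk(a)$, which I bound from below by \eqref{eq:gammaineq}; every higher coefficient $\alpha_{n-j+m}(a)/\rk(a)$ with $m\geq 1$ is rewritten, via $(P_{j-m})$ and the normal-crossing sequence \eqref{eq:Fexact} together with the cutting formula \eqref{eq:alphaComputations}, as an \emph{exact} expression in the coefficients of $F_1/T_1$ and $F_2/T_2$. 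Collecting the resulting terms by the monomials $a_1^p a_2^q$ they carry, the cross-terms produced by $a^{2l} = (a_1+a_2)^{2l}$ and by the weights $(-1)^{l-1}/(2\,l!)$ in \eqref{eq:gammaineq} must recombine, through the relation between $Q(x)$ and $Q(x)^{-1}$, into exactly the two one-variable corrections on the right-hand side; after this the single inequality borrowed from Lemma~\ref{lemma:alphaineq} is all that remains.

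In the second step I would pass from $F_i/T_i$ to the minimal destabilizing quotient $\sheaf{F}_{a_i}|_{D_i}$. By $(P_{j-m})$ the coefficients of $F_i/T_i$ and of $\sheaf{F}_{a_i}|_{D_i}$ coincide in every codimension $<j$, so the difference $\beta_{n-j}(F_i/T_i)/\rk(F_i/T_i) - \beta_{n-j}(a_i)/\rk(a_i)$ collapses to the difference of leading coefficients $\alpha_{n-j}(F_i/T_i)/\rk(F_i/T_i) - \alpha_{n-j}(a_i)/\rk(a_i)$. Since $F_i/T_i$ is a quotient of $E|_{D_i}$ and $\sheaf{F}_{a_i}|_{D_i}$ is its minimal $\ell$-destabilizing quotient, the extremal property $p_\ell(F_i/T_i) \geq p_\ell(\sheaf{F}_{a_i}|_{D_i})$ forces this difference to be $\geq 0$. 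Chaining the two steps then yields the stated inequality.

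The main obstacle is the bookkeeping in the first step: one must verify that the Bernoulli-weighted corrections attached to the single degree $a$ split exactly into the corrections for $a_1$ and $a_2$ plus the error terms of \eqref{eq:gammaineq}. Conceptually this is forced by the additivity of $\beta$ explained above, but carrying it out means keeping careful track of which coefficients are genuine equalities supplied by the induction hypotheses and which are the only two honest inequalities, coming from Lemma~\ref{lemma:alphaineq} and from minimality, so that the direction of the final estimate is preserved throughout.
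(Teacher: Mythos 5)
Your proposal is correct and follows essentially the same route as the paper's proof: both feed \eqref{eq:gammaineq} into the $\alpha$-to-$\beta$ conversion \eqref{eq:bernoulliInv}, use the hypotheses $(P_{<j})$ together with the constancy of the lower-order $\beta$-ratios (Remark~\ref{rmk:betaCt}) so that the Bernoulli cross-terms recombine via the binomial formula, and invoke minimality of the destabilizing quotient $\sheaf{F}_{a_i}|_{D_i}$ for the single remaining leading-coefficient inequality. Your only organizational deviation is routing the minimality step through the intermediate quantity $\beta_{n-j}(F_i/T_i)$ as a separate second stage, whereas the paper applies \eqref{eq:betaFi} directly to the leading terms after performing the recombination.
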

\begin{proof}
The basic idea is to use \eqref{eq:bernoulliInv} in order to replace the $\alpha$-terms in \eqref{eq:gammaineq} by $\beta$-terms. This way the LHS of \eqref{eq:gammaineq} can be rewritten as
\begin{align*}
     \frac{\alpha_{n-j}(a)}{\rk(a)} = \frac{\beta_{n-j}(a)}{\rk(a)} + \sum_{l=1}^{j-1} \frac{(-1)^{l}}{(l+1)!}a^{l} \frac{\beta_{n-j+l}(a)}{\rk(a)}.
\end{align*}

We are now concerned with the RHS of \eqref{eq:gammaineq}. A straightforward computation gives
\begin{align*}
&\sum_{l=1}^{j-1} \frac{(-1)^{l-1}a_2^l}{2l!} \frac{\alpha_{n-j+l}(F_1/T_1)}{\rk(F_1/T_1)} =\\
    ={}&\sum_{l=1}^{j-1} \frac{(-1)^{l-1}a_2^l}{2l!}\sum_{s=1}^{j-l} \frac{(-1)^{s-1}a_1^{s-1}}{s!}\frac{\beta_{n-j+l+s-1}(a_1)}{\rk(a_1)} \tag{by \eqref{eq:bernoulliInv} and $(P_{<j}.2$)}\\ ={}&\sum_{p=1}^{j-1}\frac{\beta_{n-j+p}(a_1)}{\rk(a_1)}\sum_{l=1}^p \frac{(-1)^{l-1}a_2^l}{2l!}\frac{(-1)^{p-l} a_1^{p-l}}{(p-l+1)!}\tag{by the change of variables $p=s+l-1$}\\
    ={}& \sum_{p=1}^{j-1}\frac{\beta_{n-j+p}(a_1)}{a_1\rk(a_1)}\frac{(-1)^{p-1}}{2(p+1)!}((a_1 + a_2)^{p+1} - a_1^{p+1} - a_2^{p+1})\tag{by binomial formula}\\
    ={}& \sum_{l=1}^{j-1}\frac{(-1)^{l-1}}{2(l+1)!}\frac{\beta_{n-j+l}(a_1)}{a_1\rk(a_1)}(a^{l+1} - a_1^{l+1} - a_2^{l+1})\tag{by the change of variables $l=p$}.
\end{align*}
In a similar fashion,
\begin{align*}%\label{eq:betaComputations}
    \sum_{l=1}^{j-1} \frac{(-1)^{l-1}a_1^l}{2l!} \frac{\alpha_{n-j+l}(F_2/T_2)}{\rk(F_2/T_2)} = \sum_{l=1}^{j-1}\frac{(-1)^{l-1}}{2(l+1)!}\frac{\beta_{n-j+l}(a_2)}{a_2\rk(a_2)}(a^{l+1} - a_2^{l+1}-a_1^{l+1}).
\end{align*}
Adding the last two identities, for $a_1, a_2 \gg 0$ we obtain by Remark~\ref{rmk:betaCt} that
\begin{align}\label{eq:dif}
     &\sum_{l=1}^{j-1} \frac{(-1)^{l-1}}{2l!} (a_2^l\frac{\alpha_{n-j+l}(F_1/T_1)}{\rk(F_1/T_1)} + a_1^l\frac{\alpha_{n-j+l}(F_2/T_2)}{\rk(F_2/T_2)}=\notag\\
    ={}& \sum_{l=1}^{j-1} \frac{(-1)^{l-1}}{(l+1)!}\left(\frac{\beta_{n-j+l}(a)}{a\rk(a)}a^{l+1} - \frac{\beta_{n-j+l}(a_1)}{a_1\rk(a_1)}a_1^{l+1}-\frac{\beta_{n-j+l}(a_2)}{a_2\rk(a_2)}a_2^{l+1}\right).
\end{align}

As $F_i/T_i$ is a torsion-free quotient of $E|_{D_i}$, it must satisfy
\begin{align}\label{eq:betaFi}
    \frac{\alpha_{n-j}(F_i/T_i)}{\rk(F_i/T_i)}\geq \frac{\alpha_{n-j}(\sheaf{F}_{a_i}|_{D_i})}{\rk(\sheaf{F}_{a_i}|_{D_i})} = \frac{\alpha_{n-j}(a_i)}{\rk(a_i)}.
\end{align}
Hence the RHS of $\eqref{eq:gammaineq}$ satisfies
\begin{align*}
{}&\frac{\alpha_{n-j}(F_1/T_1)}{\rk(F_1/T_1)} + \frac{\alpha_{n-j}(F_2/T_2)}{\rk(F_2/T_2)} -
    \sum_{l=1}^{j-1} \frac{(-1)^{l-1}}{2l!} \left(a_2^l\frac{\alpha_{n-j+l}(F_1/T_1)}{\rk(F_1/T_1)} + a_1^l\frac{\alpha_{n-j+l}(F_2/T_2)}{\rk(F_2/T_2)}\right)
 \\
    \geq{}&\frac{\alpha_{n-j}(a_1)}{\rk(a_1)} + \frac{\alpha_{n-j}(a_2)}{\rk(a_2)} - \tag{by \eqref{eq:dif} and \eqref{eq:betaFi}}\\
    {}& \sum_{l=1}^{j-1} \frac{(-1)^{l-1}}{(l+1)!}\left(\frac{\beta_{n-j+l}(a)}{a\rk(a)}a^{l+1} - \frac{\beta_{n-j+l}(a_1)}{a_1\rk(a_1)}a_1^{l+1}-\frac{\beta_{n-j+l}(a_2)}{a_2\rk(a_2)}a_2^{l+1}\right)\\
    \geq{}&\frac{\beta_{n-j}(a_1)}{\rk(a_1)} + \frac{\beta_{n-j}(a_2)}{\rk(a_2)}+ \sum_{l=1}^{j-1} \frac{(-1)^{l}}{(l+1)!}a^{l} \frac{\beta_{n-j+l}(a)}{\rk(a)}. \tag{by \eqref{eq:bernoulliInv}}
\end{align*}
Comparing both sides of \eqref{eq:gammaineq}, we conclude that for $a_1, a_2 \gg 0$
\begin{align*}
    \frac{\beta_{n-j}(a)}{\rk(a)} \geq \frac{\beta_{n-j}(a_1)}{\rk(a_1)} + \frac{\beta_{n-j}(a_2)}{\rk(a_2)}.
\end{align*}
\end{proof}

\begin{lemma}\label{lemma:gammaConstant}
$\frac{\beta_{n-j}(a)}{a\rk(a)}$ is constant for $a \gg 0$.
\end{lemma}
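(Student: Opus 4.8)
The plan is to read $\mu(a) \coloneqq \beta_{n-j}(a)/(a\,\rk(a))$ as the normalized average of a superadditive sequence and then to promote mere convergence to eventual constancy. First I would dispose of the range $1 \le j \le \ell$: there Remark~\ref{rmk:betaCt} already gives $\mu(a) = \alpha_{n-j+1}(E)/\rk(E)$ independently of $a$, so the entire content is concentrated in the single case $j = \ell+1$ (the argument below being uniform in $j$ in any event).

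Next I would set $g(a) \coloneqq \beta_{n-j}(a)/\rk(a)$ and interpret Lemma~\ref{lemma:ineqbetweengamma} as superadditivity, $g(a_1+a_2) \ge g(a_1)+g(a_2)$, for every splitting $a = a_1+a_2$ with $a_1,a_2 \gg 0$ coming from a simple normal crossing $D = D_1+D_2$. In particular $g(na) \ge n\,g(a)$, so $\mu(a)$ is nondecreasing along multiples and, by a Fekete-type argument, $\mu(a) \to L \coloneqq \sup_a \mu(a)$ with $\mu(a) \le L$ throughout. To guarantee $L < \infty$ I would bound $\mu(a)$ from above through the $\ell$-semistability of $E$: since $\sheaf{F}_a|_D$ is a quotient of $E|_D$ and, by induction, $E|_D$ is $(\ell-1)$-semistable with $p_{\ell-1}(\sheaf{F}_a|_D) = p_{\ell-1}(E|_D)$, only the $\ell$-th truncation can move; Lemma~\ref{cor:destabilizing} then pins the relevant coefficient of $\sheaf{F}_a|_D$ below a fixed bound, while the higher coefficients entering the Bernoulli corrections are already constant by Remark~\ref{rmk:betaCt}.

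The heart of the matter is upgrading $\mu(a) \to L$ to $\mu(a) = L$ for $a \gg 0$, since a superadditive sequence may in principle approach its supremum through infinitely many distinct values. I would do this by showing that the value set $\{\mu(a) : a \gg 0\}$ is finite: it is bounded (above by $L$, below trivially), and it is discrete because $\beta_{n-j}(a)/a$ is, up to the fixed Bernoulli corrections, a normalized Hilbert coefficient of the minimal $\ell$-destabilizing quotients $\sheaf{F}_a|_D$, which form a bounded family by Lemma~\ref{lemma:boundedness} together with Grothendieck's boundedness \cite[Lem.~1.7.9]{huybrechts2010geometry}; such coefficients admit a common bounded denominator. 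A bounded set of rationals with bounded denominators is finite, and a sequence in a finite set that converges to $L$ must equal $L$ for all $a \gg 0$.

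I expect the discreteness to be the genuine obstacle, as it is precisely where convergence alone is insufficient. Should the bounded-denominator argument prove delicate, an alternative is to extract two coprime degrees $a_0, a_1$ at which the supremum $L$ is attained (using that the family of destabilizing quotients stabilizes once $\mu$ is near $L$); then every large $a$ writes as $x a_0 + y a_1$ with $x,y \ge 0$, and superadditivity gives $g(a) \ge x\,g(a_0) + y\,g(a_1) = L a$, which together with $g(a) \le L a$ forces $\mu(a) = L$. Either route yields that $\beta_{n-j}(a)/(a\,\rk(a))$ is constant for $a \gg 0$.
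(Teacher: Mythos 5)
Your skeleton coincides with the paper's: reduce to $j=\ell+1$ via Remark~\ref{rmk:betaCt}, read Lemma~\ref{lemma:ineqbetweengamma} as superadditivity of $g(a)=\beta_{n-j}(a)/\rk(a)$, bound $\mu(a)=g(a)/a$ from above using the destabilizing property, and then force eventual constancy from discreteness of the value set. The first two steps are fine, and your endgame would work \emph{if} discreteness were available. The genuine gap is the discreteness step itself, which is the heart of the paper's proof, and your justification for it fails on two counts. First, the sheaves $\sheaf{F}_a|_D$ do not form a bounded family as $a$ varies: they live on divisors of unbounded degree and their Hilbert polynomials $P(a)$ grow with $a$, so neither Lemma~\ref{lemma:boundedness} (which fixes the Hilbert polynomial) nor Grothendieck's lemma (which governs quotients of a fixed sheaf, hence applies for each fixed $a$ and $D$ separately) gives anything uniform in $a$. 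Second, and more decisively, a common bounded denominator for the coefficients $\alpha_{n-j}(\sheaf{F}_a|_D)$ is automatic anyway (Hilbert polynomials are integer-valued, so these denominators divide a constant depending only on $n$), but it is not the statement you need: the quantity in the lemma is $\beta_{n-1-\ell}(a)/(a\rk(a))$, and the division by $a$ makes the a priori denominators grow linearly in $a$; a set of the form $\{k/(aC) : a>0,\ k \in \Z\}$ is dense in $\mathbb{R}$, not discrete. What must actually be proved is that $\beta_{n-1-\ell}(a)$ (equivalently $\alpha_{n-1-\ell}(a)$) is divisible by $a$ up to a denominator bounded independently of $a$. This is where the paper brings in an idea absent from your proposal: by Remark~\ref{rmk:PicDec} and Manin's projective-bundle formula, $K(Z_a)$ is generated over $K(X)$ by the class of $p^*\sheaf{O}_{\Pi_a}(1)$, which restricts trivially to the fibers of $p$; hence the class of $\sheaf{F}_a|_D$ in $K(D)$ is an integer combination of restrictions $G_{i,p}|_D$ of locally free sheaves on $X$, and formula \eqref{eq:alphaComputations} exhibits each $\alpha_{n-j}(G_{i,p}|_D)$ as $a$ times a rational number with bounded denominator, giving $\alpha_{n-j}(a)/(a\rk(a)) \in \frac{\Z}{(\ell+1)!\rk(E)!}$. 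Nothing in your argument supplies this divisibility by $a$.

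Your fallback route is circular: to ``extract two coprime degrees at which the supremum $L$ is attained'' you must first know the supremum is attained, and attainment is precisely what discreteness provides --- a bounded superadditive sequence can converge to its supremum through infinitely many distinct values, the very scenario you identify as the obstacle. The parenthetical claim that ``the family of destabilizing quotients stabilizes once $\mu$ is near $L$'' is asserted without proof and is not obviously easier than the lemma itself. For completeness: once discreteness is in hand, both your conclusion (finite value set plus convergence) and the paper's (maximum attained at some $b$, maximum over degrees coprime to $b$ attained at some $b'$, then Lemma~\ref{lemma:ineqbetweengamma} applied to $a=cb+c'b'$ forces equality) close the argument; the difference there is cosmetic.
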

\begin{proof}
We already know the result holds true for $1 \leq j \leq \ell$ by Remark~\ref{rmk:betaCt}, so it only remains to prove it when $j = \ell + 1$.

First, we show that $\{ \frac{\beta_{n-1-\ell}(a)}{a\rk(a)} : a > 0\}$ is a discrete subset of $\Q$. Recall that $q: Z_{a} \to X$ is isomorphic to the projective bundle $\CP(\sheaf{K}^\vee) \to X$ and, via this isomorphism, we have $\sheaf{O}_{\CP(\sheaf{K}^\vee)}(1) \cong p^*(\sheaf{O}_{\Pi_a}(1))$ (see Remark~\ref{rmk:PicDec}). Denote by $\xi$ the class of $\sheaf{O}_{\Pi_a}(1)$ in the Grothendieck group $K(\Pi_a)$ and set $r \coloneqq \rk(\sheaf{K})$. We have the following isomorphism of $K(X)$-algebras (see \cite[Thm.~4.5]{manin69})
\begin{align*}
    K(Z_{a}) \cong K(\CP(\sheaf{K}^\vee)) \cong K(X)[T]/(\sum_{i=0}^{r} (-1)^i cl(\Lambda^{r-i}(\sheaf{K}^\vee))T^i),
\end{align*}
where $cl(-)$ denotes the corresponding Grothendieck class. Moreover 
\begin{align*}
    T \text{ mod }(\sum_{i=0}^{r} (-1)^i cl(\Lambda^{r-i}(\sheaf{K}^\vee))T^i) = cl(p^*\xi) \in K(Z_{a}).
\end{align*}
Using this isomorphism we can write
 \begin{align}\label{eq:grothendieckclasses}
     cl(\sheaf{F}_a) &= \sum_{i=0}^{r-1} cl(G_i)cl(p^*\xi)^i
 \end{align}
for $G_i$ coherent sheaves on $X$. As $X$ is smooth and projective, any $G_i$ has a finite free resolution. Thus, there is a finite sum
\begin{align*}
    cl(G_i) = \sum_p n_{i,p} cl(G_{i,p})
\end{align*}
in $K(X)$ with $n_{i,p} \in \Z$ and $G_{i,p}$ locally free sheaves on $X$.
Restricting \eqref{eq:grothendieckclasses} to a fiber over $[D] \in U_a$, we obtain 
\begin{align*}
    i^*cl(\sheaf{F}_a) = \sum_{i=0}^{r-1} \sum_p n_{i,p} cl(G_{i,p}|_D)
\end{align*}
in $K(Z_{[D]}) \cong K(D)$, where $i:Z_{[D]}\to Z$ is the natural immersion. For $1 \leq j \leq \ell + 1$, we get by additivity
\begin{align*}
    \frac{\alpha_{n-j}(a)}{a\rk(a)} &= \sum_{i=0}^{r-1} \sum_p n_{i,p} \frac{\alpha_{n-j}(G_{i,p}|_D)}{a\rk(a)}\\
    &= \sum_{i=0}^{r-1} \sum_p n_{i,p} \sum_{l=1}^{j}  \frac{(-1)^{l-1}a^{l-1}}{l!}\frac{\alpha_{n-j+l}(G_{i,p})}{\rk(a)} \in \frac{\Z}{(\ell+1)!\rk(E)!}.
\end{align*}
from which we obtain that $\{ \frac{\beta_{n-1-\ell}(a)}{a\rk(a)} : a > 0\}$ is discrete in $\Q$.

Second, let us show that the function $a \mapsto \frac{\beta_{n-1-\ell}(a)}{a\rk(a)}$ is bounded from above. By Remark~\ref{rmk:betaCt} we have
\begin{align}\label{eq:betaE}
    \frac{\beta_{n-j}(a)}{a\rk(a)}= \frac{\alpha_{n-j+1}(E)}{\rk(E)}
\end{align}
for $1 \leq j \leq \ell$. Then
\begin{align*}
   \frac{\alpha_{n-1-\ell}(E|_D)}{a\rk(E|_D)}
   ={}& \sum_{l=1}^{\ell+1}  \frac{(-1)^{l-1}a^l}{l!}\frac{\alpha_{n-1-\ell+l}(E)}{a\rk(E|_D)} \tag{by \eqref{eq:alphaComputations}}\\
   ={}& \frac{\alpha_{n-\ell}(E)}{\rk(E|_D)} +  \sum_{l=2}^{\ell+1}  \frac{(-1)^{l-1}a^{l-1}}{l!}\frac{\beta_{n-1-\ell+l}(a)}{a\rk(a)} \tag{by \eqref{eq:betaE}}.
\end{align*}
On the other hand, as $E|_D \to \sheaf{F}_a|_D$ is an $\ell$-destabilizing quotient, we obtain
\begin{align*}
    \frac{\alpha_{n-1-\ell}(E|_D)}{a\rk(E|_D)} \geq{}& \frac{\alpha_{n-1-\ell}(a)}{a\rk(a)}\\ 
    \geq{}& \frac{\beta_{n-1-\ell}(a)}{a\rk(a)} + \sum_{l=2}^{\ell+1} \frac{(-1)^{l-1}a^{l-1}}{l!} \frac{\beta_{n-1-\ell+l}(a)}{a\rk(a)} \tag{by \eqref{eq:bernoulliInv}}.
\end{align*}
Then
\begin{align*}
    \frac{\beta_{n-1-\ell}(a)}{a\rk(a)} \leq \frac{\alpha_{n-\ell}(E)}{\rk(E)}.
\end{align*}

Finally, we prove that $\frac{\beta_{n-1-\ell}(a)}{a\rk(a)}$ is constant for $a \gg 0$. Since the map is bounded from above and discrete, its maximum is attained at an integer $b > 0$. Consider the set
\begin{align*}
    B = \{ \frac{\beta_{n-1-\ell}(b')}{b'\rk(b')} : b' \text{ and }b \text{ are relatively prime}\}.
\end{align*}
There exists $b' \in \N$ such that $\frac{\beta_{n-1-\ell}(b')}{b'\rk(b')}$ is the maximum of $B$. Suppose that $a = cb + c'b'$ for some integers $c,c' > 0$. By Lemma~\eqref{lemma:ineqbetweengamma} there is an integer $c_0 > 0$ such that for $c,c' \geq c_0$
\begin{align*}
    \frac{\beta_{n-1-\ell}(a)}{a\rk(a)} \geq{}&  \frac{c}{a}\frac{\beta_{n-1-\ell}(b)}{\rk(b)} + \frac{c'}{a}\frac{\beta_{n-1-\ell}(b')}{\rk(b')}\\
    \geq{}&\frac{cb}{a}\frac{\beta_{n-1-\ell}(b)}{b\rk(b)} + \frac{c'b'}{a}\frac{\beta_{n-1-\ell}(b)}{b'\rk(b')}\\
    \geq{}& \frac{\beta_{n-1-\ell}(b')}{b'\rk(b')}.
\end{align*}
But $a$ and $b'$ are coprime, hence we have equality above. In particular
\begin{align*}
    \frac{\beta_{n-1-\ell}(b)}{b\rk(b)} = \frac{\beta_{n-1-\ell}(b')}{b'\rk(b')},
\end{align*}
which further gives
\begin{align*}
    \frac{\beta_{n-1-\ell}(a)}{a\rk(a)} = \frac{\beta_{n-1-\ell}(b)}{b\rk(b)}.
\end{align*}
As $b$ and $b'$ are coprime, for $a \gg 0$ there are always integers $c, c' \geq c_0$ such that $a = cb + c'b'$, which concludes the proof. 
\end{proof}

\begin{cor}
$(P_{j})$ holds true.
\end{cor}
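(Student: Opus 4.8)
The plan is to read off $(P_j)$ from Lemma~\ref{lemma:gammaConstant} by forcing every inequality used in the proofs of Lemmas~\ref{lemma:alphaineq} and~\ref{lemma:ineqbetweengamma} to become an equality. Fix $a \gg 0$ with a decomposition $a = a_1 + a_2$, $a_1, a_2 \gg 0$, as in the construction preceding Lemma~\ref{lemma:alphaineq}. By Lemma~\ref{lemma:gammaConstant} there is a constant $c$ with $\beta_{n-j}(a')/(a'\rk(a')) = c$ for all $a' \gg 0$, and therefore
\begin{align*}
\frac{\beta_{n-j}(a)}{\rk(a)} = c\,a = c\,a_1 + c\,a_2 = \frac{\beta_{n-j}(a_1)}{\rk(a_1)} + \frac{\beta_{n-j}(a_2)}{\rk(a_2)}.
\end{align*}
In other words, the inequality of Lemma~\ref{lemma:ineqbetweengamma} holds with equality for this decomposition.

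Next I would propagate this equality backwards. The inequality of Lemma~\ref{lemma:ineqbetweengamma} was obtained from \eqref{eq:gammaineq} by subtracting from both sides the common sum $\sum_{l=1}^{j-1}\frac{(-1)^{l}}{(l+1)!}a^{l}\frac{\beta_{n-j+l}(a)}{\rk(a)}$ (via \eqref{eq:bernoulliInv}), the only genuine inequality entering the lower estimate of its right-hand side being \eqref{eq:betaFi}. Hence equality above forces both \eqref{eq:gammaineq} and \eqref{eq:betaFi} (for $i=1,2$) to be equalities. Inside the proof of Lemma~\ref{lemma:alphaineq}, the estimate \eqref{eq:gammaineq} was in turn assembled from \eqref{eq:betaF}, giving $\alpha_{n-j}(a) \geq \alpha_{n-j}(F)$, and from \eqref{eq:linearcombination}, after dividing by $\rk(a) = \rk(F) = \rk(F_i/T_i)$; so equality in \eqref{eq:gammaineq} also forces equality in both \eqref{eq:betaF} and \eqref{eq:linearcombination}.

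It remains to collect the three consequences. Equality in \eqref{eq:betaF} gives $\alpha_{n-j}(T(\sheaf{F}_C|_D)) = 0$; since $(P_{j-1})$ already places the support of $T(\sheaf{F}_C|_D)$ in codimension $\geq j-1$ in $D$, this vanishing is equivalent to codimension $\geq j$, while $\alpha_{n-j}(F) = \alpha_{n-j}(a)$ together with $\rk(F) = \rk(a)$ yields $\alpha_{n-j}(F)/\rk(F) = \alpha_{n-j}(a)/\rk(a)$; this is exactly $(P_j).1$. Recalling that \eqref{eq:linearcombination} is the average of \eqref{eq:geq2} and its mirror image, equality of the average forces equality in each summand, and in particular forces $\alpha_{n-j}(T_1) = 0$ (from \eqref{eq:geq2}) and $\alpha_{n-j}(T_2) = 0$ (from the mirror inequality), so each $T_i$ has codimension $\geq j$ in $D_i$. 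Finally, equality in \eqref{eq:betaFi} gives $\alpha_{n-j}(F_i/T_i)/\rk(F_i/T_i) = \alpha_{n-j}(a_i)/\rk(a_i)$ for $i = 1,2$, which together with the torsion statement is $(P_j).2$.

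The step I expect to be most delicate is the averaging in \eqref{eq:linearcombination}: since \eqref{eq:geq2} and its mirror rest on different torsion inputs, namely $\alpha_{n-j}(T_1) \geq 0$ in one and $\alpha_{n-j}(T_2) \geq 0$ in the other, alongside the two comparisons with restrictions to $D_1 \cap D_2$, one must verify that equality of the averaged inequality really forces equality in each of them, and hence the separate vanishing of $\alpha_{n-j}(T_1)$ and $\alpha_{n-j}(T_2)$. This is precisely where the $(P_{j-1})$ bound is indispensable: it guarantees $\dim T_i \leq n-j$, so that $\alpha_{n-j}(T_i) \geq 0$ vanishes exactly when the support codimension increases from $j-1$ to $\geq j$.
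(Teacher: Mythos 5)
Your proof is correct and follows essentially the same route as the paper: the paper likewise combines Lemma~\ref{lemma:gammaConstant} with Lemma~\ref{lemma:ineqbetweengamma} to get $\frac{\beta_{n-j}(a)}{\rk(a)} = \frac{\beta_{n-j}(a_1)}{\rk(a_1)} + \frac{\beta_{n-j}(a_2)}{\rk(a_2)}$ and then concludes that all inequalities used so far are equalities, which gives $(P_j)$. The only difference is that the paper leaves the backward propagation (equality in \eqref{eq:betaFi}, \eqref{eq:betaF}, \eqref{eq:geq2} and its mirror, hence $\alpha_{n-j}(T(\sheaf{F}_C|_D)) = \alpha_{n-j}(T_i) = 0$) implicit, whereas you spell it out, correctly noting that the induction hypothesis $(P_{j-1})$ is what makes the vanishing of these $\alpha_{n-j}$-terms equivalent to the codimension-$\geq j$ statements.
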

\begin{proof}
By Lemma~\ref{lemma:gammaConstant} and Lemma~\ref{lemma:ineqbetweengamma} we get
\begin{align*}
    \frac{\beta_{n-j}(a)}{\rk(a)} = \frac{\beta_{n-j}(a_1)}{\rk(a_1)} + \frac{\beta_{n-j}(a_2)}{\rk(a_2)}
\end{align*}
for $a_1,a_2 \gg 0$. It follows that all inequalities employed so far in the proof are in fact equalities, which proves $(P_j)$.
\end{proof}

\begin{cor}
$\rk(a)$ is constant for $a \gg 0$. 
\end{cor}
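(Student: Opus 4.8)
The plan is to regard $a \mapsto \rk(a)$ as a bounded, integer-valued function on the large integers, to establish a sub-additivity estimate for it, and then to deduce constancy by an elementary argument. Boundedness is immediate: for $[D] \in U_a$ the sheaf $\sheaf{F}_a|_D$ is a nonzero torsion-free quotient of $E|_D$, so $1 \le \rk(a) \le \rk(E)$.

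The key step is the inequality $\rk(a) \le \rk(a_i)$ for $i=1,2$ whenever $a = a_1 + a_2$ with $a_1,a_2 \gg 0$ and $D = D_1 + D_2$ is the simple normal crossing divisor fixed above. Here I would use that $(P_j)$ now holds for all $1 \le j \le \ell+1$. Combined with $\rk(F_i/T_i) = \rk(F) = \rk(a)$ (as in the proof of Lemma~\ref{lemma:alphaineq}), the identities $\alpha_{n-j}(F_i/T_i)/\rk(F_i/T_i) = \alpha_{n-j}(a_i)/\rk(a_i)$ for $1 \le j \le \ell+1$ give $p_\ell(F_i/T_i) = p_\ell(\sheaf{F}_{a_i}|_{D_i})$. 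Since $\sheaf{F}_{a_i}|_{D_i}$ is the minimal $\ell$-destabilizing quotient of $E|_{D_i}$ by Lemma~\ref{lemma:relativeHN}(3), its reduced $\ell$-truncation is the minimal one among all quotients of $E|_{D_i}$. As $F_i/T_i$ is a torsion-free quotient attaining this minimum, it must itself be $\ell$-semistable (otherwise its minimal destabilizing quotient would be a further quotient of $E|_{D_i}$ of strictly smaller $p_\ell$, contradicting minimality); then the vanishing of $\Hom$ from the lower Harder--Narasimhan term of $E|_{D_i}$ into $F_i/T_i$ forces the surjection $E|_{D_i} \to F_i/T_i$ to factor through $\sheaf{F}_{a_i}|_{D_i}$. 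Being a quotient of $\sheaf{F}_{a_i}|_{D_i}$ then gives $\rk(a) = \rk(F_i/T_i) \le \rk(a_i)$.

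Finally I would conclude combinatorially. As $\rk$ is integer-valued and bounded below, its minimum over $\{a \gg 0\}$ is attained, say $\rho = \rk(b)$. For every $a$ with $a - b \gg 0$, applying the inequality above to the decomposition $a = b + (a-b)$ yields $\rk(a) \le \rk(b) = \rho$, while $\rk(a) \ge \rho$ by minimality; hence $\rk(a) = \rho$ for all $a \gg 0$.

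I expect the main obstacle to be the middle step, and precisely the passage from the numerical identity $p_\ell(F_i/T_i) = p_\ell(\sheaf{F}_{a_i}|_{D_i})$ to an honest factorization of sheaves $E|_{D_i} \to \sheaf{F}_{a_i}|_{D_i} \to F_i/T_i$. This upgrade is what makes $(P_{\ell+1})$ indispensable, as the full agreement of reduced $\ell$-truncations is needed to force $F_i/T_i$ to be $\ell$-semistable and then to invoke the standard $\Hom$-vanishing between Harder--Narasimhan factors of distinct reduced truncated Hilbert polynomials.
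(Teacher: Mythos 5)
Your proposal is correct and follows essentially the same route as the paper: the identities from $(P_j)$ for $1 \le j \le \ell+1$ give $p_\ell(F_i/T_i) = p_\ell(\sheaf{F}_{a_i}|_{D_i})$, minimality of the $\ell$-destabilizing quotient yields $\rk(a) = \rk(F_i/T_i) \le \rk(a_i)$, and the same minimum-attainment argument on the integer-valued, bounded function $a \mapsto \rk(a)$ concludes. The only difference is that you make explicit the step the paper leaves implicit, namely that the numerical equality forces $F_i/T_i$ to be $\ell$-semistable and the surjection $E|_{D_i} \to F_i/T_i$ to factor through $\sheaf{F}_{a_i}|_{D_i}$ via $\Hom$-vanishing between Harder--Narasimhan factors of distinct reduced truncated Hilbert polynomials.
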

\begin{proof}
For $a_i \gg 0$, since $(P_j)$ is true for $1 \leq j \leq \ell + 1$, and so $p_{\ell}(F_i/T_i)= p_{\ell}(a_i) = p_{\ell}(\sheaf{F}_{a_i}|_{D_i})$. Moreover, as $\sheaf{F}_{a_i}|_{D_i}$ is the minimal $\ell$-destabilizing quotient of $E|_{D_i}$, we have $\rk(F_i/T_i) = \rk(a) \leq \rk(a_i)$. Then there is an integer $a_0 > 0$ such that $\rk(a) \leq \min\{\rk(a_1),\rk(a_2)\}$ for all $a_1, a_2 \geq a_0$. Let $R \coloneqq \{ \rk(a) : a \geq a_0 \}$. Since $R$ is discrete and bounded, there exists $b \geq a_0$ such that $\rk(b)$ is the minimum of $R$. Then, for $a \geq b + a_0$ we have $\rk(a) \leq \min\{\rk(b), \rk(a-b)\} = \rk(b)$. This implies that $\rk(a) = \rk(b)$ and consequently, $\rk(a)$ is constant for $a \gg 0$.
\end{proof}

Denote by $\sheaf{L}_a \coloneqq \Lambda^{r(a)}(\sheaf{F}_a)^{\vee\vee}$ the determinant of $\sheaf{F}_a$. By Remark~\ref{rmk:PicDec} $\Pic(Z_{a}) \cong \Pic(X) \times \Pic(\Pi_a)$, thus $\sheaf{L}_a \cong p^*M_a \otimes q^*L_a$ for some $L_a \in \Pic(X)$ and $M_a \in \Pic(\Pi_a)$. In particular $\det(\sheaf{F}_a|_D) \cong L_a|_D$ for $[D] \in U_a$.

\begin{lemma}\label{lemma:sameLineB}
There is a line bundle $L \in \Pic(X)$ such that $L \cong L_a$ for $a \gg 0$.
\end{lemma}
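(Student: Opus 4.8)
\section*{Proof proposal}

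The plan is to show that the degeneration $D = D_1 + D_2$ forces a compatibility $L_a|_{D_i} \cong L_{a_i}|_{D_i}$ of determinants, and then to upgrade this to an isomorphism $L_a \cong L_{a_i}$ on all of $X$ by the Grothendieck--Lefschetz theorem. Note first that the hypotheses $1 < \ell < n$ force $n \geq 3$, so that a general divisor $D_i \in U_{a_i}$ is a smooth integral variety of dimension $n - 1 \geq 2$; this is exactly what is needed for Lefschetz. Once $L_a \cong L_{a_1}$ is known for every splitting $a = a_1 + a_2$ with $a_1, a_2 \gg 0$, constancy follows by a standard bootstrapping: given two large integers $a, a'$, I would pick a common large summand $a_1$ and write $a = a_1 + a_2$ and $a' = a_1 + a_2'$, obtaining $L_a \cong L_{a_1} \cong L_{a'}$.

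First I would compare the determinants over the degenerate divisor $D$. Since $\det \sheaf{F}_C$ restricts to $\sheaf{L}_a \cong p^*M_a \otimes q^*L_a$ over the points of $C \cap U_a$, and the central fibre $D \subset Z_C$ is the principal divisor cut out by the pullback of a point of the smooth curve $C$ (so that both its normal bundle and $p^*M_a|_D$ are trivial), restriction to $D$ gives $\det(\sheaf{F}_C|_D) \cong L_a|_D$. By the corollaries to $(P_{\ell+1})$ the torsion $T(\sheaf{F}_C|_D)$ and each $T_i$ are supported in codimension $\geq \ell + 1 \geq 2$, so passing to torsion-free quotients does not alter the determinant: $\det(\sheaf{F}_C|_D) \cong \det(F)$ and $\det(F)|_{D_i} \cong \det(F_i) \cong \det(F_i/T_i)$. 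Restricting the first isomorphism to $D_i$ therefore yields $L_a|_{D_i} \cong \det(F_i/T_i)$.

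The heart of the argument is then to identify $F_i/T_i$ with the minimal $\ell$-destabilizing quotient $\sheaf{F}_{a_i}|_{D_i}$ of $E|_{D_i}$, which gives $\det(F_i/T_i) \cong L_{a_i}|_{D_i}$ and hence $L_a|_{D_i} \cong L_{a_i}|_{D_i}$. Here I would use that $F_i/T_i$ is a torsion-free quotient of $E|_{D_i}$ with $p_{\ell}(F_i/T_i) = p_{\ell}(a_i)$ and $\rk(F_i/T_i) = \rk(a_i)$, by the corollaries. Since $p_{\ell}(a_i)$ is the minimal reduced $\ell$-truncated Hilbert polynomial among all quotients of $E|_{D_i}$, every quotient of $F_i/T_i$ has $p_{\ell} \geq p_{\ell}(F_i/T_i)$, so $F_i/T_i$ is itself $\ell$-semistable. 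The penultimate term of the $\ell$-Harder--Narasimhan filtration of $E|_{D_i}$, all of whose factors have strictly larger $p_{\ell}$, must map to zero in the $\ell$-semistable sheaf $F_i/T_i$ by the usual Hom-vanishing between $\ell$-semistable sheaves of decreasing $p_{\ell}$; hence the kernel of $E|_{D_i} \to \sheaf{F}_{a_i}|_{D_i}$ is contained in that of $E|_{D_i} \to F_i/T_i$. This produces a surjection $\sheaf{F}_{a_i}|_{D_i} \to F_i/T_i$ of pure sheaves of equal rank, whose kernel is a torsion subsheaf of a pure sheaf and therefore vanishes, giving the desired isomorphism.

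Finally, for $a_i \gg 0$ the divisor $D_i$ is a smooth ample divisor in the smooth projective variety $X$ of dimension $n \geq 3$, so by the Grothendieck--Lefschetz theorem (see \cite[XII]{sga2}) the restriction $\Pic(X) \to \Pic(D_i)$ is injective. Applying this to the line bundle $L_a \otimes L_{a_i}^{-1}$, which restricts trivially to $D_i$, yields $L_a \cong L_{a_i}$, and the bootstrapping of the first paragraph concludes the proof. I expect the main obstacle to be the sheaf-theoretic identification of $F_i/T_i$ with the minimal destabilizing quotient, as it is what converts the numerical data accumulated in $(P_1),\ldots,(P_{\ell+1})$ into an honest isomorphism of sheaves, and hence of their determinants.
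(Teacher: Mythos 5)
Your proposal follows the same route as the paper, whose own proof has exactly two ingredients: the identification $F_i/T_i \cong \sheaf{F}_{a_i}|_{D_i}$, and a citation of \cite[Lem.~7.2.7]{huybrechts2010geometry} for the determinant bookkeeping. Your treatment of the identification (minimality of $p_{\ell}(a_i)$ among all quotients, hence $\ell$-semistability of $F_i/T_i$; Hom-vanishing along the $\ell$-Harder--Narasimhan filtration; a surjection of pure sheaves of equal rank is an isomorphism) is correct and is a fleshed-out version of the paper's one-line argument. The Lefschetz descent and the bootstrapping over splittings $a = a_1 + a_2$ are also sound; you are right that $n \geq 3$ is automatic here since the lemma sits inside the induction step $1 < \ell < n$ (the route in \cite[Lem.~7.2.7]{huybrechts2010geometry}, via $\Pic(Z_{a_1}) \cong \Pic(X) \times \Pic(\Pi_{a_1})$ as in Remark~\ref{rmk:PicDec}, also covers $n=2$, but that is not needed for this paper).

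The genuine gap is in your first step, the specialization of determinants. You claim $\det(\sheaf{F}_C)|_D \cong L_a|_D$ because the central fibre is principal, so its normal bundle and $p^*M_a|_D$ are trivial. But $\det(\sheaf{F}_C)$ and $(p^*M_a \otimes q^*L_a)|_{Z_C}$ are only known to agree on $Z_{C \setminus [D]}$, where $\sheaf{F}_C$ coincides with $\sheaf{F}_a$; since $Z_C$ is normal and the central fibre is \emph{reducible}, these two line bundles may differ by $\sheaf{O}_{Z_C}(\nu_1 Y_1 + \nu_2 Y_2)$ with $\nu_1, \nu_2 \in \Z$ arbitrary, where $Y_1 \cong D_1$, $Y_2 \cong D_2$ are the components of $Z_{[D]}$. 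Triviality of the normal bundle of the whole fibre only handles the diagonal case $\nu_1 = \nu_2$: indeed $\sheaf{O}_{Z_C}(Y_1+Y_2)|_D$ is trivial, but $\sheaf{O}_{Z_C}(Y_2)|_{Y_1} \cong \sheaf{O}_{D_1}(D_1 \cap D_2) \cong \sheaf{O}_X(a_2)|_{D_1}$ is very ample. So your degeneration actually yields only $L_a|_{D_1} \cong L_{a_1}|_{D_1} \otimes \sheaf{O}_X(\nu a_2)|_{D_1}$ for some $\nu \in \Z$, and after Lefschetz only $L_a \cong L_{a_1} \otimes \sheaf{O}_X(\nu a_2)$. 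Killing $\nu$ requires an extra input --- precisely the wrinkle that the proof of \cite[Lem.~7.2.7]{huybrechts2010geometry} (to which the paper defers) handles by a degree comparison, and it is available here: writing $\deg(L_a) \coloneqq c_1(L_a)\cdot c_1(\sheaf{O}_X(1))^{n-1} = \deg(L_a|_D)/a = \deg(\det(\sheaf{F}_a|_D))/a$, one checks that $\deg(L_a) = \rk(a)\bigl(\tfrac{\beta_{n-2}(a)}{a\rk(a)} - \alpha_{n-1}(\sheaf{O}_X)\bigr)$, which is constant for $a \gg 0$ by Remark~\ref{rmk:betaCt} (case $j=2$, valid since $\ell \geq 2$ here) and the constancy of $\rk(a)$; as $\deg \sheaf{O}_X(a_2) > 0$, comparing degrees forces $\nu = 0$. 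With this correction inserted, your argument closes.
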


\begin{proof}
We have proved that $p_{\ell}(F_i/T_i)= p_{\ell}(a_i) = p_{\ell}(\sheaf{F}_{a_i}|_{D_i})$. Then $F_i/T_i \cong \sheaf{F}_{a_i}|_{D_i}$ since $\sheaf{F}_{a_i}|_{D_i}$ is the (unique) minimal $\ell$-destabilizing quotient of $E|_{D_i}$ and has the same rank as $F_i/T_i$. In particular they have the same determinant line bundle. This is enough to prove the result, which now follows exactly the same as in \cite[Lem.~7.2.7]{huybrechts2010geometry}. 
\end{proof}

By using Lemma~\ref{lemma:sameLineB}, one can extend the minimal destabilizing quotient of $E|_D$ over $X$. Precisely, we prove the following result:

\begin{prop}\label{prop:extension}
There is a torsion-free quotient $E \to F$ on $X$ that extends $E|_D \to \sheaf{F}_a|_D$ for $[D] \in U_a$ and $a \gg 0$.
\end{prop}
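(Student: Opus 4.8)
The plan is to complete the Mehta--Ramanathan argument exactly as in \cite[Thm.~7.2.8]{huybrechts2010geometry}, using the constancy of the determinant from Lemma~\ref{lemma:sameLineB} to descend the fibrewise destabilising data to a single quotient on $X$. Write $r \coloneqq \rk(a)$, which is constant for $a \gg 0$ by the corollary above, and recall that $\det(\sheaf{F}_a|_D) \cong L|_D$ for a fixed $L \in \Pic(X)$ and all $a \gg 0$. For $[D] \in U_a$ the quotient $E|_D \to \sheaf{F}_a|_D$ induces, upon taking $\Lambda^r$ and passing to the torsion-free image, a nonzero Plücker homomorphism $\psi_D : \Lambda^r E|_D \to \det(\sheaf{F}_a|_D) \cong L|_D$, i.e.\ a section of $\sheafHom(\Lambda^r E, L)|_D$. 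The goal is to assemble the $\psi_D$ into a single global datum on $X$ and then to recover from it a torsion-free quotient $E \to F$ restricting to $E|_D \to \sheaf{F}_a|_D$ for general $D$.

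First I would globalise over the incidence variety. The universal quotient $q^*E \to \sheaf{F}_a$ on $Z_{U_a}$ induces a homomorphism $q^*\Lambda^r E \to \sheaf{L}_a \cong p^*M_a \otimes q^*L$, that is, a section $\sigma_a$ of $q^*\sheafHom(\Lambda^r E, L) \otimes p^*M_a$ over $Z_{U_a}$. Since $\Pic(\Pi_a) \cong \Z$ we may write $M_a \cong \sheaf{O}_{\Pi_a}(m)$, and because $p^*M_a$ is trivial on the fibres of $p$, restricting $\sigma_a$ to $p^{-1}([D]) \cong D$ recovers $\psi_D$ up to a nonzero scalar. Thus the single section $\sigma_a$ simultaneously encodes all the fibrewise homomorphisms $\psi_D$.

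Next I would descend $\sigma_a$ to $X$ along $q$. By Remark~\ref{rmk:PicDec}, under the identification $q : Z_a \cong \CP(\sheaf{K}^\vee) \to X$ one has $p^*\sheaf{O}_{\Pi_a}(1) \cong \sheaf{O}_{\CP(\sheaf{K}^\vee)}(1)$, so the projection formula gives $q_*\bigl(q^*\sheafHom(\Lambda^r E, L) \otimes p^*M_a\bigr) \cong \sheafHom(\Lambda^r E, L) \otimes \textup{Sym}^m(\sheaf{K}^\vee)$. Pushing $\sigma_a$ forward therefore produces a nonzero homomorphism $\Lambda^r E \to L \otimes \textup{Sym}^m(\sheaf{K}^\vee)$ on $X$ whose evaluation over $[D] \in U_a$ returns $\psi_D$. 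By the Grothendieck--Plücker description of the relative Quot scheme this data corresponds to a rank-$r$ quotient $E|_V \to F_V$ over a dense open $V \subset X$; taking the saturation of its kernel inside the torsion-free sheaf $E$ then yields a saturated subsheaf $G \subset E$ of corank $r$, and $F \coloneqq E/G$ is a torsion-free quotient of rank $r$ with $F|_D \cong \sheaf{F}_a|_D$ for general $[D] \in U_a$.

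The main obstacle is precisely this last descent step. The open locus $U_a$ may have codimension-one complement in $\Pi_a$, so $Z_a \setminus Z_{U_a}$ is a divisor in $Z_a$ and the fibrewise quotients cannot simply be glued across it; one must instead push the determinant section through the projective bundle $q$ and control its extension over the degeneracy locus, checking that the descended homomorphism is fibrewise decomposable (so that it genuinely defines a quotient) and that $F$ is torsion-free. It is exactly the constancy of the determinant $L$ established in Lemma~\ref{lemma:sameLineB} that places every $\psi_D$ in sections of the single reflexive sheaf $\sheafHom(\Lambda^r E, L)$ on $X$, making this descent possible; without it the target line bundles $L_a|_D$ would vary with $D$ and no global homomorphism could be formed.
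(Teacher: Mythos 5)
Your setup (the Pl\"ucker homomorphism $\psi_D$, and the observation that Lemma~\ref{lemma:sameLineB} is exactly what places all of them inside the single sheaf $\sheafHom(\Lambda^r E, L)$) agrees with the paper, but your core mechanism for producing a quotient on $X$ --- globalising to a section $\sigma_a$ over the incidence variety and pushing forward along $q$ --- has a genuine gap. The pushforward $q_*\bigl(q^*\sheafHom(\Lambda^r E, L) \otimes p^*M_a\bigr) \cong \sheafHom(\Lambda^r E, L) \otimes \textup{Sym}^m(\sheaf{K}^\vee)$ (which in any case requires $m \geq 0$; for $m<0$ it vanishes) neither loses nor gains information: a section of this sheaf on $X$ \emph{is} the same data as $\sigma_a$, namely the whole varying family $\{\psi_D\}_D$, packaged as a degree-$m$ polynomial on each fibre of $q$ with values in the fibres of $\sheafHom(\Lambda^r E,L)$. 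It is not a homomorphism $\Lambda^r E \to L$, and there is no ``Grothendieck--Pl\"ucker'' correspondence between maps into the higher-rank bundle $L \otimes \textup{Sym}^m(\sheaf{K}^\vee)$ and rank-$r$ quotients of $E$: decomposability and Pl\"ucker arguments need a map into a line bundle. Your descent would work precisely when $M_a \cong \sheaf{O}_{\Pi_a}$ (i.e.\ $m=0$), for then $H^0(Z_a, q^*\sheafHom(\Lambda^r E,L)) \cong H^0(X, \sheafHom(\Lambda^r E,L))$ and $\sigma_a = q^*\sigma$ for a single $\sigma$ restricting to $\psi_D$ on every $D$; but you do not prove that $M_a$ is trivial, and nothing in the paper provides this. (Incidentally, the obstacle you flag --- extending $\sigma_a$ across $Z_a \setminus Z_{U_a}$ --- is not the real difficulty: the quotient $q^*E \to \sheaf{F}_a$ of Lemma~\ref{lemma:relativeHN} is defined on all of $Z_a$, so $\sigma_a$ extends naturally; and in any case you only state what ``one must'' do there without doing it.)

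What is missing is the actual extension mechanism, which cannot be purely formal: for small $a$ the maps $\psi_D$ genuinely need not extend to $X$, so any correct proof must use $a \gg 0$ through cohomology. The paper fixes a single $D \in U_a$ and extends $\sigma_D \colon \Lambda^r (E|_D) \to L|_D$ to $\sigma\colon \Lambda^r E \to L$ by a Serre duality argument: the obstruction lies in $\Ext^1(\Lambda^r E, L(-D)) \cong H^1(X, \sheafHom(\Lambda^r E, L)(-a))$, which vanishes for $a \gg 0$ because $(\Lambda^r E)^\vee \otimes L$ is reflexive; the bound is uniform in $D$ since it depends only on $E$ and the fixed line bundle $L$ provided by Lemma~\ref{lemma:sameLineB}. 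One then checks that the induced section of $\CP(\Lambda^r E)$ over an open subset $X' \subset X$ with complement of codimension $\geq 2$ factors through $\Grass(E,r)$, obtains a quotient $E|_{X'} \to F'$, and extends it to $X$ via the kernel construction; the concluding saturation lemma is also what justifies the isomorphism $F|_D \cong \sheaf{F}_a|_D$, which you assert without proof. If you replace your pushforward step by this cohomological extension from a fixed divisor, the remainder of your outline goes through.
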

\begin{proof}[Sketch of proof]
For a more detailed exposition of the argument below we refer the reader to \cite[p.~201]{huybrechts2010geometry}. Let $[D] \in U_a$ and consider the destabilizing quotient $E|_D \to \sheaf{F}_a|_D$. Clearly, there is an open subset $D' \subset D$ such that $E|_{D'}$ and $\sheaf{F}_a|_{D'}$ are both locally free sheaves and $\codim(D \setminus D',D) \geq 2$. If $r \coloneqq \rk(a)$, then by taking the $r$-th exterior power of $E|_{D} \to \sheaf{F}_a|_{D}$ we obtain a morphism $\sigma_D : \Lambda^r(E|_D) \to L|_D$, which is surjective over $D'$. Note that here we used the crucial fact that the determinant line bundles $L_a$ are isomorphic for $a \gg 0$ by Lemma~\ref{lemma:sameLineB}. Now, $\sigma_D$ induces a map
\begin{align*}
    D' \to \Grass(E,r) \subset \CP(\Lambda^r E),
\end{align*}
where the last inclusion is given by the Plucker embedding. By a standard argument using Serre duality, for $a \gg 0$ one can uniquely extend $\sigma_D$ to a morphism $\sigma: \Lambda^r E \to L$, which is surjective over an open subset $X' \subset X$ such that $\codim(X \setminus X',X) \geq 2$. This induces a map $X' \to \CP(\Lambda^r E)$ which actually factorizes through $\Grass(E,r)$. So we get a quotient $E|_{X'} \to F'$ that extends $E|_{D'} \to \sheaf{F}_a|_{D'}$. Let $G'$ be its kernel and take $G \coloneqq \iota^{-1}(i_*G')$, where $i: X' \to X$ is the inclusion and $\iota: E \to E^{\vee \vee}$ is the canonical injection of $E$ into its double dual. Then we have a short exact sequence
\begin{align*}
    0 \to G \to E \to F \to 0
\end{align*}
on $X$ such that $F|_{X'} = F'$. Restricting it to $D$, we get a short exact sequence
\begin{align*}
     0 \to G|_D \to E|_D \to F|_D \to 0.
\end{align*}
On the other side, if $G_a$ denotes the kernel of $E|_{D} \to \sheaf{F}_a|_{D}$, then we have another short exact sequence
\begin{align*}
    0 \to G_a \to E|_D \to \sheaf{F}_a|_{D} \to 0.
\end{align*}
Note that, by construction, $G|_{D'}$ is isomorphic to $G_a|_{D'}$. Since both $G|_D$ and $G_a$ are saturated in $E|_D$ and have the same double dual, it follows by the lemma below that they are in fact isomorphic over $D$. Consequently, $F|_D$ is isomorphic to $\sheaf{F}_a|_D$.
\end{proof}

\begin{lemma}
Let $M$ be a torsion-free sheaf and $N \subset M$ a subsheaf. Let $\iota : M \to M^{\vee \vee}$ be the canonical injection. Then the saturation of $N$ in $M$ is equal to $ \iota^{-1}(N^{\vee \vee})$.
\end{lemma}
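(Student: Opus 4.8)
The plan is to realise all the sheaves in play as subsheaves of a single ambient object — the generic fibre of $M$ — and then to compare the two sides of the claimed equality at the codimension-one points of $X$, exploiting that $X$ is smooth (hence normal), so that each local ring $\sheaf{O}_{X,\p}$ with $\htt(\p)=1$ is a discrete valuation ring.

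First I would fix the ambient object and translate both sides into intersections inside it. Since $M$ is torsion-free, $\iota$ is injective and I may view $M \subseteq M^{\vee\vee}$, both sitting inside the constant sheaf $M_\eta \coloneqq M \otimes_{\sheaf{O}_X} \mathcal{K}_X$ at the generic point $\eta$. Writing $V \coloneqq N_\eta \subseteq M_\eta$ for the generic fibre of $N$, the saturation of $N$ in $M$ is precisely $\overline{N} = M \cap V$: this subsheaf contains $N$, its quotient $M/(M\cap V)$ injects into the torsion-free sheaf $M_\eta/V$, and any strictly larger subsheaf of $M$ would enlarge the generic fibre. On the other side, the functorial map $N^{\vee\vee} \to M^{\vee\vee}$ is injective, because its composition with $N \to N^{\vee\vee}$ is the injection $N \hookrightarrow M \hookrightarrow M^{\vee\vee}$; a nonzero kernel would be a torsion-free subsheaf of $N^{\vee\vee}$ mapping into the torsion sheaf $N^{\vee\vee}/N$, which is impossible. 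I may therefore regard $N^{\vee\vee} \subseteq M^{\vee\vee}$, so that $\iota^{-1}(N^{\vee\vee}) = M \cap N^{\vee\vee}$, the intersection being taken in $M^{\vee\vee} \subseteq M_\eta$.

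Next I would invoke the reflexive-hull description valid on the normal variety $X$: for any torsion-free sheaf $F$ one has $F^{\vee\vee} = \bigcap_{\htt(\p)=1} F_\p$ inside $F_\eta$, since over the DVR $\sheaf{O}_{X,\p}$ a finitely generated torsion-free module is free, hence already reflexive, giving $(F^{\vee\vee})_\p = F_\p$. Applied to $N$ this yields $N^{\vee\vee} = \bigcap_{\htt(\p)=1} N_\p \subseteq V$, and one inclusion follows immediately: $\iota^{-1}(N^{\vee\vee}) = M \cap N^{\vee\vee} \subseteq M \cap V = \overline{N}$. For the reverse inclusion $\overline{N} \subseteq \iota^{-1}(N^{\vee\vee})$, since $\overline{N} \subseteq M$ it suffices to show $\overline{N} \subseteq N^{\vee\vee}$, that is, $\overline{N}_\p \subseteq N_\p$ for every codimension-one point $\p$. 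Here I would use that $N$ is saturated in $M$: saturation alters a subsheaf only on loci of codimension $\geq 2$, so $\overline{N}_\p = N_\p$ whenever $\htt(\p)=1$, and then $\overline{N} = \bigcap_{\htt(\p)=1}\overline{N}_\p \subseteq \bigcap_{\htt(\p)=1} N_\p = N^{\vee\vee}$. Combining the two inclusions gives $\overline{N} = \iota^{-1}(N^{\vee\vee})$.

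The main obstacle is exactly this comparison at codimension-one points. Passing to $N^{\vee\vee}$ automatically recovers the saturation of $N$ along loci of codimension $\geq 2$ — that is what reflexivity buys — so the only nonformal input is the equality $\overline{N}_\p = N_\p$ at divisorial points; everything else is the intersection bookkeeping recorded above. This divisorial comparison is guaranteed precisely when $N$ is saturated in $M$, which is the situation in the intended application, where $N$ is a saturated subsheaf of $E|_D$. I would therefore organise the write-up around that single point, after which the two subsheaves agree at every codimension-one point and are both saturated, so the reflexive-hull intersection formula forces them to coincide.
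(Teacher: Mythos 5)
Your proposal takes a genuinely different route from the paper's, and --- more importantly --- it correctly isolates a hypothesis that the statement as written is missing. The paper argues by a diagram chase: it first claims that $M^{\vee\vee}/N^{\vee\vee}$ is torsion-free in order to conclude $S \subset N' \coloneqq \iota^{-1}(N^{\vee\vee})$ for the saturation $S$ of $N$, and then applies the snake lemma to a map of short exact sequences whose bottom row asserts $S^{\vee\vee} \cong N'^{\vee\vee}$. You instead work inside the generic fibre $M_\eta$ and compare stalks at height-one primes, using $F^{\vee\vee} = \bigcap_{\htt(\p)=1} F_\p$ (valid since $X$ is smooth, hence normal). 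Your analysis shows exactly where the unconditional statement breaks: everything reduces to the equality $\overline{N}_\p = N_\p$ at height-one primes, and this can fail. Concretely, take $M = \sheaf{O}_X$ and $N = \sheaf{O}_X(-D)$ for an effective divisor $D$: the saturation of $N$ in $M$ is all of $\sheaf{O}_X$ (the quotient $\sheaf{O}_D$ is torsion), whereas $\iota^{-1}(N^{\vee\vee}) = \sheaf{O}_X(-D)$, both sheaves being already reflexive. The same example shows that the paper's claim that $M^{\vee\vee}/N^{\vee\vee}$ is torsion-free is false in general: here $M^{\vee\vee}/N^{\vee\vee} = \sheaf{O}_D$, and reflexivity of $N^{\vee\vee}$ only rules out torsion supported in codimension $\geq 2$, not along divisors. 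So the lemma needs the additional hypothesis that $N$ is saturated at codimension-one points (for instance, that $N$ is saturated), which is precisely your ``divisorial comparison''; this hypothesis does hold in the paper's application, where both $G|_D$ and $G_a$ are saturated subsheaves of $E|_D$.

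One correction to your write-up: the justification ``saturation alters a subsheaf only on loci of codimension $\geq 2$'' is false as a general claim --- the example above alters $\sheaf{O}_X(-D)$ along the divisor $D$ --- and if it were true, the lemma would hold unconditionally, which it does not. Your closing paragraph has the right logic: the equality $\overline{N}_\p = N_\p$ at height-one primes is not a general fact but is exactly the needed hypothesis, equivalent to $M/N$ having no torsion in codimension one, and it should be stated as such. With that hypothesis in place your argument --- $\overline{N} = M \cap N_\eta$, injectivity of $N^{\vee\vee} \to M^{\vee\vee}$, the identification $\iota^{-1}(N^{\vee\vee}) = M \cap \bigcap_{\htt(\p)=1} N_\p$, and the comparison of the two intersections --- is complete and correct, and it repairs the paper's proof rather than reproducing it. Note also that once $N$ is assumed saturated, there is an even shorter argument, with no DVR machinery: $\bigl(M \cap N^{\vee\vee}\bigr)/N$ is contained in $N^{\vee\vee}/N$, hence supported in codimension $\geq 2$, hence is a torsion subsheaf of the torsion-free sheaf $M/N$, forcing $M \cap N^{\vee\vee} = N = \overline{N}$.
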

\begin{proof}
Denote by $S$ the saturation of $N$ in $M$ and set $N' \coloneqq \iota^{-1}(N^{\vee \vee})$. Then we have an injection $M/N' \to M^{\vee\vee}/N^{\vee\vee}$ such that $M^{\vee\vee}/N^{\vee\vee}$ is torsion-free, and so $M/N'$ is also torsion-free. Hence $S \subset N'$. We get a diagram as follows
\begin{align*}
    \xymatrix{0\ar[r] &  S \ar[r] \ar[d]^\alpha &  N' \ar[r] \ar[d]^\beta &  N'/S \ar[r] \ar[d] &  0 \\ 
             0 \ar[r] &  S^{\vee \vee} \ar[r] &  N'^{\vee \vee} \ar[r] &  0 \ar[r] &0}
\end{align*}
where $\alpha$ and $\beta$ are the canonical maps. By applying Snake's Lemma, we obtain an injection $N'/S \to \Coker(\alpha)$. But $N'/S \subset E/S$ is torsion-free and $\Coker(\alpha)$ is supported in positive codimension, thus $N'/S = 0$.
\end{proof}

We can complete now the proof of Theorem~\ref{thm:GiesekerRestriction}. Suppose the theorem were false. Then $E|_D$ is not $\ell$-semistable and admits an $\ell$-destabilizing quotient $E|_D \to \sheaf{F}_a|_D$ for $D \in \Pi_a$ general and $a \gg 0$. By Proposition~\ref{prop:extension} we can extend this quotient over $X$ for $D \in U_a$ of sufficiently large degree, which yields a contradiction by Lemma~\ref{cor:destabilizing}. 

\begin{cor}\label{cor:Giesekerss}
If $E$ is a Gieseker-semistable (i.e.~$n$-semistable) sheaf on $X$, then the restriction $E|_D$ is still Gieseker-semistable for a general divisor $D \in \Pi_a$ of sufficiently large degree.
\end{cor}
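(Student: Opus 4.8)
The goal is to deduce Corollary~\ref{cor:Giesekerss} from Theorem~\ref{thm:GiesekerRestriction}. The plan is to reduce Gieseker-semistability of the restriction to $\ell$-semistability for an appropriate value of $\ell$. Recall that Gieseker-semistability of a torsion-free sheaf $E$ on $X$ (of dimension $n$) is exactly $n$-semistability, while the restriction $E|_D$ lives on the divisor $D$, which has dimension $n-1$. Thus Gieseker-semistability of $E|_D$ is $(n-1)$-semistability of $E|_D$ \emph{as a sheaf on $D$}. The key observation is that $(n-1) = \dim(D) < n = \dim(X)$, so Theorem~\ref{thm:GiesekerRestriction} applies with $\ell = n-1$.

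\begin{proof}
Since $E$ is Gieseker-semistable on $X$, it is $n$-semistable, hence in particular $\ell$-semistable for $\ell = n-1 < n = \dim(X)$, because the $\ell$-semistability condition compares fewer coefficients and is therefore implied by $n$-semistability. By Theorem~\ref{thm:GiesekerRestriction} applied with $\ell = n-1$, the restriction $E|_D$ to a general divisor $D \in \Pi_a$ of sufficiently large degree is again $(n-1)$-semistable as a sheaf on $X$. By Bertini's Theorem we may further assume that $D$ is smooth and integral, so that $D$ is itself a smooth projective variety of dimension $n-1$, and $E|_D$ is a torsion-free sheaf on $D$. For a general such $D$ we may assume it avoids the associated points of $E$, so that $\alpha_n(E|_D) = \alpha_n(E) \cdot a > 0$ and the Hilbert polynomial of $E|_D$ on $D$ has degree $n-1 = \dim(D)$. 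Now $(n-1)$-semistability of $E|_D$ regarded as a sheaf on the $(n-1)$-dimensional variety $D$ is precisely Gieseker-semistability of $E|_D$ on $D$, by the identification of $\ell$-semistability with Gieseker-semistability when $\ell$ equals the dimension of the ambient variety (here $\ell = n-1 = \dim(D)$). Hence $E|_D$ is Gieseker-semistable for a general divisor $D \in \Pi_a$ of sufficiently large degree.
\end{proof}

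The only point requiring care is the bookkeeping of dimensions: the truncation level $\ell = n-1$ is defined relative to $X$ in the hypothesis of Theorem~\ref{thm:GiesekerRestriction}, whereas the conclusion about $E|_D$ must be reinterpreted on the divisor $D$, where $n-1$ now equals the full dimension. I expect this reindexing of the truncation level against the ambient dimension to be the only conceptual step; everything else is a direct appeal to Theorem~\ref{thm:GiesekerRestriction} together with Bertini and the definitional identification of $d$-semistability with Gieseker-semistability for a $d$-dimensional sheaf.
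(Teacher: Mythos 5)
Your proof is correct and is exactly the argument the paper intends: the corollary is stated as an immediate consequence of Theorem~\ref{thm:GiesekerRestriction} (the paper gives no separate proof), using that $n$-semistability implies $(n-1)$-semistability, applying the theorem with $\ell = n-1$, and then observing that $(n-1)$-semistability of the $(n-1)$-dimensional sheaf $E|_D$ is by definition Gieseker-semistability. Your careful handling of the dimension bookkeeping (truncation level versus dimension of the support) is precisely the one conceptual point, and you got it right.
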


Clearly the result of Corollary~\ref{cor:Giesekerss} does not hold in general for Gieseker-stability, as we have already pointed out in the introduction.

\subsection*{The case of $\ell$-stability}
We next show Theorem~\ref{thm:restrictionStable}. Fix $\ell < n$ and let $E$ be an $\ell$-stable torsion-free sheaf on $X$. Recall that any $\ell$-stable sheaf is in particular $\ell$-simple. Therefore a necessary condition for the theorem to hold is that the restriction $E|_D$ to a general divisor of sufficiently large degree remains $\ell$-simple. We show below that this is indeed the case whenever $\ell < n$.

\begin{lemma}\label{lemma:restrictionSimple}
Under the same assumptions as above, the restriction $E|_D$ to a general divisor $D \in \Pi_a$ remains $\ell$-simple and torsion-free for $a \gg 0$.
\end{lemma}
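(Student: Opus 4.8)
The plan is to prove the two assertions separately: torsion-freeness by a reflexive-hull argument, and $\ell$-simplicity by spreading an endomorphism of the generic restriction over the incidence variety and pushing it down along $q$.

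For torsion-freeness, recall that since $E$ is torsion-free on the smooth variety $X$, its non-locally-free locus $\Sigma$ has $\codim(\Sigma,X)\ge 2$ and the reflexive hull sits in an exact sequence $0\to E\to E^{\vee\vee}\to Q\to 0$ with $Q$ supported in codimension $\ge 2$. For a general $D\in\Pi_a$, Bertini gives $D$ smooth and integral, and one may further assume $D$ avoids the associated points of $E$, $E^{\vee\vee}$ and $Q$ and meets $\Sigma$ and $\Supp(Q)$ properly. Then $E^{\vee\vee}|_D$ is reflexive, hence torsion-free, and $\Tor_1(Q,\sheaf{O}_D)=0$ makes $E|_D\hookrightarrow E^{\vee\vee}|_D$ injective; being a subsheaf of a torsion-free sheaf, $E|_D$ is torsion-free. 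This already holds for general $D$ in every degree $a>0$.

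For $\ell$-simplicity it suffices, by openness of $\ell$-simplicity in flat families of torsion-free sheaves (which follows from Lemma~\ref{lemma:simpleDepth} together with the openness of simpleness in Lemma~\ref{lemma:openness}, exactly as in the proof of Lemma~\ref{lemma:openSimple}), to show that the restriction to the \emph{generic} divisor is $\ell$-simple; the good locus then spreads to a dense open set of $\Pi_a$. Concretely, let $\eta$ be the generic point of $\Pi_a$, set $K=k(\Pi_a)$, and let $\sheaf{D}_\eta\subset X_K$ be the generic divisor. I will show $E_K|_{\sheaf{D}_\eta}$ is $\ell$-simple, and then conclude for general closed $D$ with $a\gg 0$ by the openness just recalled, intersecting with the torsion-free locus above.

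The heart is the following spreading argument. Fix a closed $Y\subset\sheaf{D}_\eta$ with $\codim(Y,\sheaf{D}_\eta)\ge\ell+1$ and an endomorphism $\phi$ of $E_K|_{\sheaf{D}_\eta}$ over $\sheaf{D}_\eta\setminus Y$; the goal is to force $\phi$ to be scalar. Spreading out over a dense open $V\subset\Pi_a$ containing $\eta$, the pair $(Y,\phi)$ extends to a closed $\sheaf{Y}\subset Z_V:=p^{-1}(V)$ with $\sheaf{Y}_\eta=Y$ and to $\Phi\in\End(q^*E|_{Z_V\setminus\sheaf{Y}})=\Gamma(Z_V\setminus\sheaf{Y},q^*\sheafEnd(E))$, where $\codim(\sheaf{Y},Z_V)\ge\ell+1$. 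Now I invoke the projective-bundle structure $q:Z_a\cong\CP(\sheaf{K}^\vee)\to X$ of Remark~\ref{rmk:PicDec}, whose fibres are $\CP^{N-1}$ with $N=\dim\Pi_a$. Over the set of $x\in X$ whose fibre $q^{-1}(x)$ meets $\sheaf{Y}$ in codimension $\ge 2$, Hartogs' extension on $\CP^{N-1}$ gives $\Gamma(q^{-1}(x)\setminus\sheaf{Y},q^*\sheafEnd(E))=\sheafEnd(E)_x$, so $\Phi$ descends through $q$ to a genuine endomorphism $\psi\in\End(E|_{X\setminus B})$, where $B=\{x:\dim(q^{-1}(x)\cap\sheaf{Y})\ge N-2\}$ is the locus of bad fibres. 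Once $\codim(B,X)\ge\ell+1$ is secured, the $\ell$-simplicity of $E$ forces $\psi\in k$, hence $\Phi=q^*\psi$ and $\phi$ are scalar; taking $Y=\varnothing$ recovers ordinary simplicity of the generic restriction as a special case.

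The main obstacle is precisely this codimension bookkeeping. The upper-semicontinuity estimate for the jump locus of $q|_{\sheaf{Y}}$ gives $\dim B\le\dim\sheaf{Y}-(N-2)=n-\ell$, i.e.\ only $\codim(B,X)\ge\ell$, one short of what the $\ell$-simplicity of $E$ requires. Closing this gap is where the hypothesis $\ell<n$ and the genericity of $\sheaf{D}_\eta$ must enter: one refines the fibre-dimension analysis of $\sheaf{Y}$ (using that it is the spread of a subset of the generic divisor, so its fibres cannot concentrate in codimension $\le\ell$ without violating $\codim(\sheaf{Y},Z_V)\ge\ell+1$), or, where the non-locally-free locus $\Sigma$ interferes, one replaces $E_K|_{\sheaf{D}_\eta}$ by its modification from Lemma~\ref{lemma:simpleDepth} on $\sheaf{D}_\eta$ to reduce $\ell$-simplicity to simpleness before descending. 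Granting the improved bound $\codim(B,X)\ge\ell+1$, the argument above yields $\ell$-simplicity of the generic restriction, and the lemma follows.
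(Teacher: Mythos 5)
Your torsion-freeness argument is fine (it is essentially the paper's appeal to \cite[Lem.~1.1.13]{huybrechts2010geometry}), but the $\ell$-simplicity part has a genuine gap, and you name it yourself: the whole argument is conditional on ``granting the improved bound'' $\codim(B,X)\geq\ell+1$, while your jump-locus estimate only gives $\codim(B,X)\geq\ell$. This bound cannot be recovered by dimension bookkeeping: it is actually \emph{false} for an arbitrary closed $\sheaf{Y}\subset Z_a$ of codimension $\ell+1$ meeting the generic divisor in codimension $\geq\ell+1$. For instance, with $\ell\leq n-2$, take $B\subset X$ irreducible of codimension $\ell$, fix a morphism $\sigma:B\to X$ with $\sigma(x)\neq x$, and let $\sheaf{Y}=\{([D],x): x\in B,\ x\in D,\ \sigma(x)\in D\}$. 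Then $\sheaf{Y}$ dominates $\Pi_a$ with generic fibre of dimension $n-\ell-2$ (so it is a legitimate spread of a codimension-$(\ell+1)$ subset of the generic divisor), yet every fibre $q^{-1}(x)\cap\sheaf{Y}$ with $x\in B$ has dimension $N-2$, so your bad locus contains $B$ and has codimension exactly $\ell$. Hence the refinement you hope for must use that $Y$ is the non-scalar locus of an endomorphism, i.e.\ a genuinely new argument, not finer bookkeeping. There are two further problems upstream: the fibres of $q|_{Z_V}$ over points of $X$ are only \emph{open} subsets of $\CP^{N-1}$ whose complement can be divisorial (as $\Pi_a\setminus V$ may contain a divisor), so the fibrewise Hartogs extension is not available as stated; and the ``openness of $\ell$-simplicity in flat families'' you invoke is not what Lemma~\ref{lemma:openSimple} provides --- that lemma goes from closed fibres to the generic fibre, and reversing it would require spreading out the depth property $(\star)$ of the modification to nearby fibres, which you do not address.

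The missing idea, which is the paper's route, bypasses the incidence variety entirely. Replace $E$ by the sheaf $G$ of Lemma~\ref{lemma:simpleDepth}, which agrees with $E$ in codimension $\ell$ (so it suffices to prove the claim for $G$) and satisfies $(\star)$; since $\ell<n$, every closed point $x$ has $\dim(\sheaf{O}_{X,x})=n\geq\ell+1$, so $\depth_x(G)\geq 2$ everywhere, and Auslander--Buchsbaum gives $\hd_x(G)\leq n-2$, i.e.\ $\Exts^{n-1}(G,G)=0$. Applying $\Hom(G,-)$ to $0\to G(-D)\to G\to G|_D\to 0$ and using Serre duality, the local-to-global spectral sequence and Serre vanishing, one finds
\begin{align*}
\Ext^1(G,G(-D))\cong H^0\bigl(X,\Exts^{n-1}(G,G)\otimes\omega_X(D)\bigr)=0
\end{align*}
for $a\gg 0$, so $\End(G)\to\End(G|_D)$ is surjective and $G|_D$ is simple. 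For general $D$ the restriction $G|_D$ is moreover torsion-free and still satisfies $(\star)$ (again \cite[Lem.~1.1.13]{huybrechts2010geometry}), so the local cohomology argument of Lemma~\ref{lemma:simpleDepth}(3) shows $\End(G|_{D\setminus Y})=\End(G|_D)=k$ for every $Y\subset D$ of codimension $\geq\ell+1$; that is, $G|_D$, hence $E|_D$, is $\ell$-simple. In short, the hypothesis $\ell<n$ enters through a single $\Ext$-vanishing on $X$, which is exactly the step your descent along $q$ was trying, and failing, to substitute.
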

\begin{proof}
By Lemma~\ref{lemma:simpleDepth} there exists a sheaf $G$ on $X$ which is isomorphic with $E$ in codimension $\ell$ and moreover satisfies \eqref{eq:depthProp}. In particular $G$ is also $\ell$-simple. In this case it is enough to show that the lemma holds for $G$.

As $\depth_x(G) \geq 2$ for any closed point $x \in X$, we have by Auslander-Buchsbaum formula
\begin{align*}
  \hd_x(E) = \dim(X) - \depth_x(E) \leq n-2,
\end{align*}
which further gives $\Exts^{n-1}(G,G)=0$.

Applying the functor $\Hom(G,-)$ to the short exact sequence
\begin{align*}
    0 \to G(-D) \to G \to G|_D \to 0,
\end{align*}
we obtain a long exact sequence
\begin{align}\label{eq:extG}
    \End(G) \to \End(G|_D) \to \Ext^{1}(G,G(-D)) \to \ldots.
\end{align}
By using Serre duality and the local-to-global Ext spectral sequence, one obtains
\begin{align*}
    \Ext^{1}(G,G(-D)) \cong H^0(X,\Exts^{n-1}(G,G) \otimes \omega_X(D)) = 0
\end{align*}
for $a \gg 0$, where $\omega_X$ denotes the dualizing line bundle on $X$. As $G$ is $\ell$-simple, it is in particular simple. It follows by \eqref{eq:extG} that $G|_D$ is also simple for $a \gg 0$. Moreover, for $[D] \in \Pi_a$ general we may further assume by \cite[Lem.~1.1.13]{huybrechts2010geometry} that $G|_D$ is a torsion-free sheaf satisfying \eqref{eq:depthProp}.

Let $Y \subset D$ be a closed subset such that  $\codim(Y,D) \geq \ell + 1$. As $G|_D$ satisfies \eqref{eq:depthProp}, we have by Lemma~\ref{lemma:simpleDepth} that
\begin{align*}
  \Hom(G|_D,G|_D) \to \Hom(G|_{D \setminus Y},G|_{D \setminus Y}) 
\end{align*}
is an isomorphism, which proves that $G|_D$ is also $\ell$-simple.
\end{proof}

We proceed with the proof of Theorem~\ref{thm:restrictionStable}. One main difference compared to the case of $\ell$-semistability lies in the fact that the Harder-Narasimhan filtration of an $\ell$-stable sheaf is trivial. We will use instead the so-called extended $\ell$-socle (see Definition~\ref{def:extendedSocle}) to construct a flat family of destabilizing quotients. This approach is due to Mehta and Ramanathan \cite{mehta1984restriction}. 

In what follows we suppose that the theorem is false, i.e. for any integer $a_0 > 0$ there exists $a \geq a_0$ such that $E|_D$ is $\ell$-unstable for general $D \in \Pi_a$. 

\begin{lemma}\label{lemma:flatFam}
Let $a > 0$ be an integer such that $E|_D$ is $\ell$-unstable and $\ell$-simple for general $[D] \in \Pi_a$. Then there exists a dense open subset $W_a \subset \Pi_a$ and a quotient $q^*E \to \sheaf{G}_a$ such that
\begin{enumerate}[(1),nolistsep]
  \item each divisor $D \in W_a$ is smooth and integral,
  \item $\sheaf{G}_a$ is flat over $W_a$,
  \item for $[D] \in W_a$, $E|_D \to \sheaf{G}_a|_D$ is $\ell$-destabilizing, i.e. $p_\ell(E|_D)=p_\ell(\sheaf{G}_a|_D)$,
  \item for $a \gg 0$, $E|_D$ is $\ell$-semistable.
\end{enumerate}
\end{lemma}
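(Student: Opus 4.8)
The plan is to mimic the construction in Lemma~\ref{lemma:relativeHN}, replacing the role of the minimal $\ell$-destabilizing quotient of the Harder--Narasimhan filtration (which is trivial for an $\ell$-stable sheaf) by a destabilizing quotient arising from the extended $\ell$-socle. First I would record the hypotheses we are allowed to use: by Lemma~\ref{lemma:restrictionSimple}, for $a \gg 0$ the restriction $E|_D$ is torsion-free and $\ell$-simple for general $[D] \in \Pi_a$, which immediately gives item $(4)$ via the induction hypothesis of Theorem~\ref{thm:restrictionStable} applied at level $\ell$ together with Lemma~\ref{lemma:simpleDepth} (one shows $E|_D$ is $\ell$-semistable for $a \gg 0$, since otherwise a genuine destabilizing quotient would lower the truncated polynomial strictly and extend over $X$ by Proposition~\ref{prop:extension}, contradicting $\ell$-stability of $E$). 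Item $(1)$ is again Bertini's theorem. So the substance of the lemma is the simultaneous construction of the flat family $\sheaf{G}_a$ in $(2)$ together with the destabilizing property $(3)$.

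For the construction I would proceed as follows. By assumption $E|_D$ is $\ell$-unstable but (for $a \gg 0$) $\ell$-semistable and $\ell$-simple for general $[D]$; since it is $\ell$-semistable and not $\ell$-stable, the strict inclusion of its extended $\ell$-socle $F \subsetneq E|_D$ produces a proper quotient $E|_D \to (E|_D)/F$ with $p_\ell((E|_D)/F) = p_\ell(E|_D)$, i.e.~a genuine $\ell$-destabilizing quotient. The key point is that this quotient can be chosen in families: by Lemma~\ref{lem:algSocle} the extended $\ell$-socle is defined over the algebraically closed base field and is stable under the construction, and by Lemma~\ref{lemma:relativekHN} (the relative Jordan--H\"older / extended-socle package, which runs in parallel with the relative Harder--Narasimhan filtration) one obtains, after shrinking to a dense open $W_a \subset \Pi_a$ over which everything is flat, a quotient $q^*E \to \sheaf{G}_a$ whose fibrewise restriction $E|_D \to \sheaf{G}_a|_D$ realizes the extended-$\ell$-socle destabilizing quotient. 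Flatness over $W_a$ is exactly the constancy of Hilbert polynomials in the family, which holds on a dense open by generic flatness after discarding the locus where the fibre dimension jumps; the relative $\isom$ or $\Quot$ scheme argument already used in Lemma~\ref{lemma:openness} lets me cut out the good locus $W_a$ as the complement of a closed set.

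The main obstacle will be showing that the extended-$\ell$-socle quotient varies in a genuinely flat family over a dense open subset, rather than merely existing fibre by fibre. The difficulty is that the Jordan--H\"older filtration, and hence the socle, is not unique and is only well-defined in codimension $\ell$; so to get a \emph{flat} family one must spread out a single quotient chosen at the generic point and then check that its specializations still give the socle-type destabilizing quotient on a dense open. Concretely, I would take the generic point $\xi$ of $\Pi_a$, use that $E|_{D_\xi}$ is $\ell$-semistable and $\ell$-simple (Lemma~\ref{lemma:openSimple}), pick a destabilizing quotient coming from the extended $\ell$-socle over the generic fibre, spread it out to a quotient $q^*E \to \sheaf{G}_a$ over some dense open $W_a$ via the relative $\Quot$-scheme, and then invoke openness (Lemma~\ref{lemma:openness}) together with the semicontinuity of the relative Hilbert polynomial to guarantee that $(3)$ persists after possibly shrinking $W_a$. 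Once flatness and the fibrewise destabilizing property are in hand, items $(1)$ and $(4)$ follow formally, completing the proof.
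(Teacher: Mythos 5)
Your overall architecture in the final paragraph (pass to the generic point $\xi$ of $\Pi_a$, take the quotient by the extended $\ell$-socle there, spread it out to a flat family over a dense open $W_a$, and get (1) from Bertini and (4) from the semistable restriction theorem) is the same as the paper's, and you correctly identify why one must work at the generic point rather than fibre by fibre. But there is a genuine gap at the very first step of that construction: you never establish that $E|_{D_\xi}$ is $\ell$-\emph{unstable}, and without this the extended $\ell$-socle of $E|_{D_\xi}$ may be all of $E|_{D_\xi}$, in which case there is no destabilizing quotient at the generic fibre to spread out. The hypothesis of the lemma gives instability only at general \emph{closed} points of $\Pi_a$, where the residue field is $k=\bar{k}$; the residue field at $\xi$ is the function field of $\Pi_a$, which is not algebraically closed, and over such a field $\ell$-stability and geometric $\ell$-stability genuinely differ. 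The paper closes exactly this gap by a two-step transfer: (i) since geometric $\ell$-stability is open in flat families (Lemma~\ref{lemma:openness}) and the general closed fibres are geometrically $\ell$-unstable, the generic fibre $E|_{D_\xi}$ is geometrically $\ell$-unstable; (ii) since $E|_{D_\xi}$ is $\ell$-simple by Lemma~\ref{lemma:openSimple}, Lemma~\ref{lem:geomSt} upgrades geometric $\ell$-instability to honest $\ell$-instability over $k(\xi)$, so that by Lemma~\ref{lem:extendedSocle} the extended $\ell$-socle is a proper subsheaf. You quote Lemma~\ref{lemma:openSimple} --- precisely the ingredient needed for step (ii) --- but never use it for this purpose; you invoke openness only in the direction of spreading property (3) from $\xi$ to an open set, which is in fact unnecessary, since flatness over the irreducible base already forces the Hilbert polynomials of the fibres to be constant.

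Two secondary issues. First, your middle paragraph appeals to Lemma~\ref{lemma:relativekHN} as a ``relative Jordan--H\"older / extended-socle package'' and to Lemma~\ref{lem:algSocle} as saying the socle is ``defined over the base field''; neither is what those lemmas say. Lemma~\ref{lemma:relativekHN} is the relative Harder--Narasimhan filtration, which is trivial here because the fibres are $\ell$-semistable for $a \gg 0$, and Lemma~\ref{lem:algSocle} states that an $\ell$-stable sheaf equals its extended socle after base change to the algebraic closure (it enters only as an ingredient in the proof of Lemma~\ref{lem:geomSt}). Since your last paragraph effectively retracts this route, these misattributions are not fatal, but the missing instability transfer above is. Second, item (4) needs no induction or re-run of the extension argument: it is an immediate application of Theorem~\ref{thm:GiesekerRestriction}, since an $\ell$-stable sheaf is in particular $\ell$-semistable.
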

\begin{proof}
Since $k$ is algebraically closed, $E|_D$ is geometrically $\ell$-unstable. Recall that the property of being geometrically $\ell$-stable is open in flat families, cf.~Lemma~\ref{lemma:openness}. Thus, if $D_\xi \in \Pi_a$ denotes the generic point of $\Pi_a$, then $E|_{D_\xi} \coloneqq q^*E_{D_\xi}$ must be geometrically $\ell$-unstable on $Z_{D_\xi}$. On the other side, from Lemma~\ref{lemma:openSimple} we obtain that $E|_{D_\xi}$ is also $\ell$-simple. Hence, according to Lemma~\ref{lem:geomSt}, $E|_{D_\xi}$ is $\ell$-unstable.

Now consider the quotient $G_\xi$ of $E|_{D_\xi}$ by its $\ell$-extended socle (see Definition~\ref{def:extendedSocle}). Extend $G_\xi$ to a quotient $q^*E \to \sheaf{G}_a$ over $Z_a$. By generic flatness there is an open subset $W_a \subset \Pi_a$ such that $\sheaf{G}_a$ is flat over $W_a$ and $E|_D \to \sheaf{G}_a|_D$ is $\ell$-destabilizing for all $[D] \in W_a$. Moreover, by Bertini's Theorem and shrinking $W_a$ if necessary, we may assume that each divisor $D \in W_a$ is smooth and integral. Also note that (4) holds true by Theorem~\ref{thm:GiesekerRestriction}.
\end{proof}

\begin{lemma}
For $a \gg 0$ the restriction $E|_D$ to a general divisor $D \in \Pi_a$ is $\ell$-unstable.
\end{lemma}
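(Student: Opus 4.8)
The plan is to reduce the statement to a \emph{rank-constancy} assertion for the extended $\ell$-socle. For $a \gg 0$ the general restriction $E|_D$ is $\ell$-semistable by Theorem~\ref{thm:GiesekerRestriction} and $\ell$-simple by Lemma~\ref{lemma:restrictionSimple}; for such restrictions, Lemma~\ref{lem:algSocle} and Lemma~\ref{lem:extendedSocle} together show that $E|_D$ coincides with its extended $\ell$-socle \emph{if and only if} it is $\ell$-stable. Writing $\Soc(E|_D) \subset E|_D$ for the extended $\ell$-socle and $s(a) \coloneqq \rk(\Soc(E|_D))$, one always has $0 < s(a) \le \rk(E)$, with $s(a) = \rk(E)$ exactly on the $\ell$-stable (good) locus and $s(a) < \rk(E)$ on the strictly $\ell$-semistable (bad) locus. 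Thus the lemma follows at once from the claim that $s(a)$ is constant for $a \gg 0$: the standing hypothesis guarantees $s(a) < \rk(E)$ for infinitely many $a$, so by constancy $s(a) < \rk(E)$ for all large $a$, i.e.~$E|_D$ is $\ell$-unstable for all large $a$. Working with the socle \emph{subsheaf} rather than the quotient $\sheaf{G}_a$ of Lemma~\ref{lemma:flatFam} is what keeps the rank positive and avoids a degeneracy at the good fibres.

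To prove constancy of $s(a)$ I would transport, essentially verbatim, the numerical machinery of Lemmas~\ref{lemma:alphaineq}--\ref{lemma:gammaConstant}. Spreading the generic extended socle into a relative Quot scheme gives, over a dense open $W_a \subset \Pi_a$, a flat family whose fibre is $\Soc(E|_D)$; degenerating a general divisor of degree $a = a_1 + a_2$ to a simple normal crossing divisor $D = D_1 + D_2$ and extending over the corresponding curve $C$ as before produces the comparison sheaves $F_i/T_i$ on $D_i$. Because the extended socle satisfies $p_\ell(\Soc(E|_D)) = p_\ell(E)$ with \emph{equality}, the invariants $\beta_{n-j}(a)/(a\,\rk(a))$ are pinned to $\alpha_{n-j+1}(E)/\rk(E)$ for $j \le \ell$ exactly as in Remark~\ref{rmk:betaCt}, so only the level $j = \ell+1$ carries new information. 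The boundedness-plus-discreteness argument of Lemma~\ref{lemma:gammaConstant}, followed by its rank corollary, then forces $\beta_{n-1-\ell}(a)/(a\,\rk(a))$ and hence $s(a)$ to stabilize for $a \gg 0$.

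The main obstacle is that the extended $\ell$-socle is canonical only \emph{in codimension} $\ell$ (its graded object $\gr_\ell$ is unique only outside codimension $\ge \ell+1$), so it is not literally a subsheaf varying flatly across the jump from the $\ell$-stable fibres ($s = \rk(E)$) to the strictly $\ell$-semistable ones. The delicate point is therefore to set up the socle family over $W_a$ and to establish the $(P_j)$-type compatibility $F_i/T_i \cong \sheaf{G}_{a_i}|_{D_i}$ on the reducible fibre, which is what makes the additive inequality of Lemma~\ref{lemma:ineqbetweengamma} an \emph{equality} and pins down the rank. I would control this using generic $\ell$-simplicity: by Lemma~\ref{lemma:openSimple} and Lemma~\ref{lem:geomSt} the restriction over the generic point of $\Pi_a$ is $\ell$-simple, so by Lemma~\ref{lem:extendedSocle} its socle is intrinsically determined, while the properness of $\ell$-semistability (Lemma~\ref{lemma:properness}) lets me extend the socle across the special fibre $[D_1+D_2]$ without loss of rank. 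Once the socle family and its $(P_j)$-compatibility are in place, the rank-constancy argument runs as above and the lemma follows.
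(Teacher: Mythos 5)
Your reduction of the lemma to constancy of the socle rank $s(a)$ is formally fine (given $\ell$-semistability and $\ell$-simplicity of the general restriction, $s(a)=\rk(E)$ exactly when $E|_D$ is $\ell$-stable), but the proof of constancy you sketch contains a genuine gap, in fact two. First, a non sequitur: for the socle family the destabilization is with \emph{equality}, $p_\ell(\sheaf{G}_a|_D)=p_\ell(E|_D)$, and this already pins \emph{all} reduced invariants $\beta_{n-j}(a)/(a\rk(a))$ for $1\le j\le \ell+1$ to $\alpha_{n-j+1}(E)/\rk(E)$ --- including the level $j=\ell+1$, which was the only unpinned one in the semistable case. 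So the conclusion of Lemma~\ref{lemma:gammaConstant} holds trivially for your family, and it gives you nothing: these are rank-normalized quantities, and their constancy carries no information whatsoever about $\rk(a)$, hence none about $s(a)$. In the semistable case the rank was controlled not by the $\beta$-computation but by the universal property of the minimal HN quotient --- every torsion-free quotient with the same $p_\ell$ factors through it, which is what yields \eqref{eq:betaFi} and the inequality $\rk(a)\le\rk(a_i)$ in the rank corollary. Second, and relatedly, the extended $\ell$-socle has no such extremal property: a subsheaf of $E|_{D_i}$ with $p_\ell$ equal to $p_\ell(E|_{D_i})$ need not lie in the extended socle (for instance any full-rank subsheaf whose cokernel is supported in codimension $\ge\ell+1$), so the compatibility $F_i/T_i\cong\sheaf{G}_{a_i}|_{D_i}$ that you correctly flag as the delicate point has no available proof. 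Lemma~\ref{lemma:openSimple}, Lemma~\ref{lem:geomSt} and Lemma~\ref{lemma:properness} do not supply it: the first two identify stability with geometric stability, and Langton's argument extends \emph{semistable} sheaves over a DVR; none of them makes socles specialize to socles or bounds the rank of a limit by the rank of the special fibre's socle.

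The paper's proof shows that neither rank constancy nor any identification of the limit is needed; it is a contradiction argument pairing two degrees against each other. Assuming the lemma fails, one picks $a_1$ with $E|_{D_1}$ $\ell$-stable for general $D_1\in\Pi_{a_1}$, and, by the standing hypothesis, some $a>a_1$ with $E|_D$ $\ell$-unstable and $\ell$-simple for general $D\in\Pi_a$; then one degenerates the quotient family $\sheaf{G}_a$ of Lemma~\ref{lemma:flatFam} across $D=D_1+D_2$. Because the fibres of $\sheaf{G}_a$ destabilize with equality, all invariants through level $\ell$ are pinned to those of $E$, while $\ell$-semistability of $E|_{D_i}$ (Theorem~\ref{thm:GiesekerRestriction}) gives $p_\ell(G_i/T(G_i))\ge p_\ell(E|_{D_i})$; matching the two forces $p_\ell(G_1/T(G_1))=p_\ell(E|_{D_1})$. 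Thus $G_1/T(G_1)$ is \emph{some} torsion-free quotient of the $\ell$-stable sheaf $E|_{D_1}$ with equal reduced truncated Hilbert polynomial and rank $\rk(\sheaf{G}_a)<\rk(E)$, a contradiction. The only comparison object on $D_1$ is $E|_{D_1}$ itself, used through its stability; this is exactly the canonical, universally available substitute for the socle compatibility your approach cannot establish.
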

\begin{proof}[Sketch of proof]
We argue by contradiction. Let $a_1 > 0$ be an integer such that $E|_{D_1}$ is $\ell$-stable for every divisor $D_1$ in some dense open subset $W_{a_1} \subset \Pi_{a_1}$. There exists $a \geq a_1$ such that $E|_D$ is $\ell$-unstable and $\ell$-simple (cf.~Lemma~\ref{lemma:restrictionSimple}) for general $[D] \in \Pi_a$. Then we have a flat quotient $q^*E \to \sheaf{G}_a$ over $W_a \subset \Pi_a$ as given by Lemma~\ref{lemma:flatFam}.

Set $a_2 \coloneqq a - a_1$. By Theorem~\ref{thm:GiesekerRestriction}, there exists a non-empty open subset $W_{a_2} \subset \Pi_{a_2}$ such that $E|_{D_2}$ is $\ell$-semistable for every $D_2 \in W_{a_2}$. Consider a normal crossing divisor $D = D_1 + D_2$ with $D_1 \in W_{a_1}$ and $D_2 \in W_{a_2}$. Similarly to the discussion below Remark~\ref{rmk:betaCt}, there exists a smooth curve $C \subset \Pi_a$ passing through $[D]$ such that $C \setminus [D] \subset W_a$, and there is a flat quotient $q^*E \to \sheaf{G}_C$ over $Z_C$ obtained by restricting $q^*E \to \sheaf{G}_a$. 

Set $G \coloneqq (\sheaf{G}_C|_D)/T(\sheaf{G}_C|_D)$ and $G_i \coloneqq G|_{D_i}$ for $i=1,2$. As $E|_{D_i}$ is $\ell$-semistable by assumption, we have
\begin{align*}
  p_\ell(G_i/T(G_i)) \geq p_\ell(E|_{D_i}).
\end{align*}
Then, by similar arguments to that used in the $\ell$-semistable case, one shows that in fact we have equality above. In particular
\begin{align*}
  p_\ell(G_1/T(G_1)) = p_\ell(E|_{D_1}),
\end{align*}
which further implies that $G_1/T(G_1) \cong E|_{D_1}$ as $E|_{D_1}$ is $\ell$-stable. However, we also have $\rk(G_1/T(G_1))= \rk(\sheaf{G}_a) < \rk(E)$ since $\sheaf{G}_a$ is a family of (proper) destabilizing quotients. This yields a contradiction. 
\end{proof}

We have shown above how to construct a flat family $q^*E \to \sheaf{G}_a$ of (proper) destabilizing quotients for every $a \gg 0$. As before, there are line bundles $L_a$ on $X$ such that $\det(\sheaf{G}_a|_D) = L_a|_D$ for $[D] \in W_a$.

\begin{lemma}
There is an infinite subset $N \subset \N$ and a line bundle $L \in \Pic(X)$ such that $L \cong L_a$ for every $a \in N$. 
\end{lemma}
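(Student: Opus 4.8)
The plan is to adapt the determinant argument of the semistable case (Lemma~\ref{lemma:sameLineB}, which itself follows \cite[Lem.~7.2.7]{huybrechts2010geometry}) and of Mehta--Ramanathan \cite{mehta1984restriction}. The key structural point is that here the extended-socle quotients $\sheaf{G}_a$ need not be compatible across all degrees, so one can only synchronise their determinants along an infinite subsequence rather than for all $a\gg 0$. I would begin by normalising the rank: each $\sheaf{G}_a|_D$ is a proper destabilizing quotient, so $\rk(\sheaf{G}_a|_D)\in\{1,\dots,\rk(E)-1\}$, and by the pigeonhole principle there is an infinite subset $N_0\subset\N$ along which $\rk(\sheaf{G}_a|_D)=r$ is constant. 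Since $p_{\ell}(\sheaf{G}_a|_D)=p_{\ell}(E|_D)$ with $\ell\geq 1$ the slopes agree, and because degrees with respect to $\sheaf{O}_X(1)$ of a sheaf on a divisor $D\in\Pi_a$ are $a$ times the corresponding degrees on $X$ (the same normalisation underlying Remark~\ref{rmk:betaCt}), one computes that $\deg_X(L_a)=r\deg_X(E)/\rk(E)$ is independent of $a$.

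Next I would bound the determinants. The quotient $E|_D\to\sheaf{G}_a|_D$ is itself $\ell$-semistable: any further quotient of $\sheaf{G}_a|_D$ is a quotient of the $\ell$-semistable sheaf $E|_D$ (Lemma~\ref{lemma:flatFam}(4)), hence has reduced $\ell$-truncation $\geq p_{\ell}(E|_D)=p_{\ell}(\sheaf{G}_a|_D)$. Exactly as in \cite[Lem.~7.2.7]{huybrechts2010geometry}, the determinants of such destabilizing quotients --- read off over $X$ through the splitting $\Pic(Z_a)\cong\Pic(X)\times\Pic(\Pi_a)$ of Remark~\ref{rmk:PicDec} --- form a bounded family in $\Pic(X)$. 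Consequently the numerical classes $c_1(L_a)$ lie in a finite subset of the finitely generated group $\mathrm{NS}(X)$ (their degree, i.e. the intersection with $c_1(\sheaf{O}_X(1))^{n-1}$, being the constant found above), so after refining $N_0$ to an infinite subset $N_1$ I may assume $c_1(L_a)=c$ is a fixed class.

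It remains to eliminate the $\Pic^0(X)$-ambiguity, which is the main obstacle. Along $N_1$ all the $L_a$ lie in a single torsor $\Pic^{c}(X)$ under $\Pic^0(X)$, and a priori this is a positive-dimensional family, so boundedness alone does not yet force a repeated value. Here I would invoke the splitting construction: writing a general divisor of degree $a=a_1+a_2$ as a normal crossing divisor $D=D_1+D_2$ with $[D_i]\in W_{a_i}$, the uniqueness of the extended-socle quotient on each integral $D_i$ (Definition~\ref{def:extendedSocle}) identifies $\det$ of the restricted quotient with $L_{a_i}|_{D_i}$ in a compatible way, yielding a multiplicativity relation among the $L_a$. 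Combined with the boundedness of the previous paragraph, this relation confines $\{L_a:a\in N_1\}$ to a finite subset of $\Pic^{c}(X)$, after which a final pigeonhole step produces an infinite subset $N\subset N_1$ and a line bundle $L$ with $L\cong L_a$ for all $a\in N$. I expect the compatibility of determinants under this splitting to be the delicate step: it is the exact analogue of the matching of minimal destabilizing quotients in Lemma~\ref{lemma:sameLineB}, but the absence of a unique destabilizing quotient across different degrees is precisely what limits the conclusion to an infinite, rather than a cofinite, subset.
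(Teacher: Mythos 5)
The paper's own ``proof'' here is a pure citation to the slope-stable arguments of \cite[Sect.~7]{huybrechts2010geometry} and \cite[Sect.~8]{gangopadhyaymehta}, so the question is whether your sketch is a correct adaptation of that argument; it is not, and the failure occurs exactly at the step you yourself flag as delicate. When you split $D=D_1+D_2$ and form the flat limit $\sheaf{G}_C|_D$, the restriction $G_i/T(G_i)$ is merely \emph{some} torsion-free quotient of $E|_{D_i}$ with $p_\ell(G_i/T(G_i))=p_\ell(E|_{D_i})$. The uniqueness of the extended $\ell$-socle (Definition~\ref{def:extendedSocle}) concerns one distinguished destabilizing quotient of $E|_{D_i}$; nothing forces the limit quotient to coincide with it, and in general it does not: already for $E|_{D_i}\cong g_1\oplus g_2$ with $g_1\not\cong g_2$ stable of equal reduced truncated polynomial, the projections onto $g_1$ and onto $g_2$ are destabilizing quotients with non-isomorphic determinants. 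This is precisely the difference with Lemma~\ref{lemma:sameLineB}, where the limit is matched with the \emph{minimal} Harder--Narasimhan quotient, whose uniqueness together with equality of ranks and truncated polynomials pins it down. Since your ``multiplicativity relation'' rests on this unproven identification, and since you give no mechanism by which boundedness plus such a relation would confine $\{L_a\}$ to a \emph{finite} subset of the positive-dimensional torsor $\Pic^{c}(X)$, the final pigeonhole step has nothing to stand on. (The boundedness claim of your second paragraph is also unsupported: \cite[Lem.~7.2.7]{huybrechts2010geometry} proves an isomorphism $L_a\cong L_{a'}$ of determinants of the unique minimal destabilizing quotients in the semistable setting; it does not assert boundedness of any family of determinants across degrees.)

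The argument the paper points to (\cite{mehta1984restriction}, \cite[Sect.~7]{huybrechts2010geometry}, \cite[Sect.~8]{gangopadhyaymehta}) replaces uniqueness by \emph{finiteness}. For a fixed $D_1$, the kernel of any destabilizing quotient of the $\ell$-semistable sheaf $E|_{D_1}$ is again $\ell$-semistable with the same reduced truncated polynomial, so concatenating Jordan--H\"older filtrations of kernel and quotient gives a Jordan--H\"older filtration of $E|_{D_1}$; by uniqueness of $\gr_\ell$ in codimension $\ell$, the factors of the quotient form a sub-collection of those of $E|_{D_1}$ up to modifications in codimension $\geq 2$, which do not affect determinants. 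Hence $\det(G_1/T(G_1))$, and therefore $L_a|_{D_1}$, ranges over the \emph{finite} set of products of determinants of Jordan--H\"older factors of $E|_{D_1}$ as $a=a_1+a_2$ varies. One then deduces finiteness of the set $\{L_a\}$ itself --- for $\dim X\geq 3$ this is immediate from Grothendieck--Lefschetz injectivity of $\Pic(X)\to\Pic(D_1)$, while surfaces require a separate cohomological argument using that all these bundles have the same degree --- and concludes by the pigeonhole principle, which is also exactly why one only obtains an infinite subset $N$ rather than all $a\gg 0$. Your rank and degree normalizations (steps one and two) are correct and form part of this argument, but the finiteness of possible determinants on a fixed divisor via Jordan--H\"older factors is the missing key idea.
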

\begin{proof}
The result follows as in the slope-stable case. We refer the reader to \cite[Sect.~7]{huybrechts2010geometry} or \cite[Sect.~8]{gangopadhyaymehta} for a complete proof.
\end{proof}

Now a similar argument as done for Proposition~\ref{prop:extension} shows that, for a sufficiently large integer $a \in N$, $E|_D \to \sheaf{G}_a|_D$ extends to an $\ell$-destabilizing quotient $E \to G_a$ on $X$. This will give a contradiction and complete the proof of Theorem~\ref{thm:restrictionStable}. 

\begin{comment}
Let $Z_C^{reg}$ be the regular locus of $Z_C$. Consider the determinant line bundle $\det(\sheaf{G}_C|_{Z_C^{reg}})$, and extend it to a $C$-flat line bundle bundle $\sheaf{A}$ on $Z_C$. As $G_i/T(G_i)$ and $\sheaf{G}_{a_i}$ have the same determinant bundle, one may show that $\sheaf{A}|_{D_i}$ is isomorphic to $L_{a_i}|_{D_i}$ outside a closed subset $Y_i$ of codimension $2$ in $D_i$.

Note that $\sheaf{A}|_{D'} \cong \sheaf{L}_a|_{D'}$ for every $[D'] \in C \setminus [D]$, hence
\begin{align*}
  h^0(D',\sheaf{L}_a^\vee|_{Z_C} \otimes \sheaf{A}) \geq 1.
\end{align*}
By the semicontinuity property, we get a non-zero morphism $L_a|_D \to \sheaf{A}|_D$. By restriction it gives us a non-zero morphism
\begin{align*}
  L_a|_{D_i \setminus Y_i} \to L_{a_i}|_{D_i \setminus Y_i}.
\end{align*}
But these determinant line bundles have the same degree given by
\begin{align*}
  \deg(L_a) = \frac{\rk(a)\mu(a)}{a} = \frac{\rk(a_i)\mu(a_i)}{a_i}= \deg(L_{a_i})
\end{align*}
for $a_i \gg 0$. Therefore we obtain an isomorphism $L_a|_{D_i} \to L_{a_i}|_{D_i}$.
\end{comment}
\section{Application: the moduli space of $(n-1)$-semistable sheaves}\label{sect:moduliSpace}

In this section we work over the field of complex numbers $\C$. Choose a numerical Grothendieck class $c \in \grothnum{X}$ of dimension $n \coloneqq \dim(X) > 1$, which will fix the topological type of sheaves. We will use the restriction theorems to construct a moduli space of $(n-1)$-semistable sheaves of class $c$ on $X$. Afterwards, we will describe precisely its geometric points. 

\subsection*{The construction}

Since the family of $(n-1)$-semistable sheaves of class $c$ on $X$ is bounded, cf. Lemma~\ref{lemma:boundedness}, for sufficiently large $m > 0$ every such semistable sheaf $E$ is $m$-regular (in the sense of Castelnuovo--Mumford) and has Hilbert polynomial $P \coloneqq P(c)$. In particular $E(m)$ is globally generated with $P(m) = h^0(E(m))$. Thus the evaluation map induces a quotient
\begin{align*}
  H^0(E(m)) \otimes \sheaf{O}_X(-m) \to E \to 0.
\end{align*}
Let $V \coloneqq \C^{P(m)}$ and consider the subset $\Rss \subset \Quot(V \otimes \sheaf{O}_X(-m),P)$ of quotients $[q: V \otimes \sheaf{O}_X(-m) \to E]$ on $X$ such that
\begin{enumerate}[nosep]
  \item $E$ is $(n-1)$-semistable of class $c$,
  \item $q$ induces an isomorphism $V \to H^0(E(m))$.
\end{enumerate}
Then $\Rss$ is a locally closed subscheme that parametrizes $(n-1)$-semistable sheaves of class $c$ on $X$. However, note that any semistable sheaf is uniquely determined in $\Rss$ up to a base change of $V$. In this respect, $\Rss$ is endowed with a natural $G \coloneqq \SL{V}$ action. 

For the following result, see Definition~\ref{def:Gproper} regarding $G$-properness.

\begin{lemma}
$\Rss$ is a $G$-proper scheme over $\C$. 
\end{lemma}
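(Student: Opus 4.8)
The plan is to verify the valuative lifting property of Definition~\ref{def:Gproper} for the structural morphism $f\colon \Rss \to \Spec(\C)$. Since $\Rss$ is locally closed in the separated Quot scheme, $f$ is separated, and $G$-invariance is automatic because the target is a point; so only the lifting criterion is at issue. Unwinding the definitions, a $K$-point $g$ is a quotient $q_K\colon V\otimes\sheaf{O}_{X_K}(-m)\to E_K$ on $X_K := X\times_\C K$, where $E_K$ is an $(n-1)$-semistable sheaf of class $c$ and the induced frame $\phi_K\colon V\otimes K \xrightarrow{\sim} H^0(E_K(m))$ is an isomorphism. I must produce an $A$-point $i$ of $\Rss$ whose generic fibre agrees with $g$ after acting by some element of $\SL{V}(K)$.

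First I would extend the family. As the Quot scheme is proper over $\C$, the $K$-point extends to an $A$-flat family $\sheaf{E}$ on $X_A := X\times_\C A$ with $\sheaf{E}_K = E_K$. Applying the properness result of Lemma~\ref{lemma:properness} (Langton's theorem adapted to $\ell=n-1$) produces a subsheaf $F\subset\sheaf{E}$ with $F_K = E_K$ and $F_k$ $(n-1)$-semistable; by flatness $F_k$ again has class $c$. Fixing $m$ uniformly via the boundedness of Lemma~\ref{lemma:boundedness}, both fibres are $m$-regular, so cohomology and base change make $p_*F(m)$ locally free of rank $P(m)$ on $\Spec(A)$, hence free since $A$ is a discrete valuation ring. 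A choice of $A$-basis is a frame $\beta\colon V\otimes A \xrightarrow{\sim} p_*F(m)$, and the evaluation $p^*p_*F(m)\otimes\sheaf{O}_{X_A}(-m)\to F$ composed with $\beta$ defines an $A$-point $i$ of $\Rss$; the two conditions cutting out $\Rss$ hold on both fibres by construction.

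The main obstacle is the final frame matching. The generic fibre of $i$ and the point $g$ have the same underlying sheaf $E_K$, hence differ by a unique $\psi\in\GL{V}(K)$. The center $\mathbb{G}_m\subset\GL{V}$ acts trivially on the Quot scheme, since rescaling a surjection leaves its kernel unchanged; thus it suffices to realise $\psi$ modulo scalars by an element of $\SL{V}(K)$, which amounts to $\det(\psi)\in(K^*)^{P(m)}$. The valuation of $\det(\psi)$ records the index of $p_*F(m)$ relative to the lattice $\phi_K(V\otimes A)$, and this index is shifted by the elementary modifications in Langton's procedure, so it need not be divisible by $P(m)$. I expect this to be the delicate point: the discrepancy is exactly what obstructs ordinary properness and motivates the weaker notion of $G$-properness. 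I would absorb the unit part of $\det(\psi)$ by adjusting the basis $\beta$ and, if the valuation fails to be divisible by $P(m)$, pass to a finite ramified extension $A'\subset K'$ of $A$ (harmless when the criterion is later used to deduce properness of the Iitaka $G$-quotient, where the valuative test is permitted after finite covers); over $A'$ one then has $\det(\psi)\in(K'^*)^{P(m)}$, so $\psi$ lies in $\mathbb{G}_m\cdot\SL{V}(K')$ and the required group element exists. The genuinely deep ingredient throughout is Langton's semistable reduction (Lemma~\ref{lemma:properness}); everything else is cohomological bookkeeping.
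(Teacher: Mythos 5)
Your construction of the $A$-point is exactly the paper's: extend the $K$-point inside the projective Quot scheme, apply Langton-type semistable reduction (Lemma~\ref{lemma:properness}) to replace the flat limit by a subsheaf $F$ whose special fibre is $(n-1)$-semistable, observe that $F_k$ again has class $c$, hence is $m$-regular by boundedness (Lemma~\ref{lemma:boundedness}), so that cohomology and base change make $p_*F(m)$ free over $A$, and choose a frame to obtain $i\in\Rss(A)$. Up to the final matching step, your proposal is the paper's two-sentence proof written out in full detail.

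That final step is where you and the paper part ways, and your caution is justified: it exposes a point the paper elides. The paper concludes from $F_K\cong E_K$ that $i\circ j$ and $g$ differ by an element of $G(K)$; but an isomorphism of the underlying sheaves only yields a matching element $\psi\in\GL{V}(K)$. Since the stabilizer in $\GL{V}(K)$ of a point of $\Rss(K)$ with simple underlying sheaf consists exactly of the scalars, improving $\psi$ to an element of $\SL{V}(K)$ is possible if and only if $\det(\psi)\in(K^*)^{P(m)}$, i.e.\ (after absorbing units into the frame $\beta$) if and only if the lattice index $[\phi_K(V\otimes A):p_*F(m)]$ is divisible by $P(m)$ --- precisely the obstruction you isolate. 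Nothing in Langton's procedure controls this residue, and it can indeed be nonzero: already for $c$ of rank one, every semistable extension of $E_K\cong\sheaf{O}_{X_K}$ is a rescaling of $\sheaf{O}_{X_A}$ (separatedness of the Picard scheme), rescaling shifts the index by multiples of $P(m)$, so a frame $\phi_K$ whose determinant has valuation one admits no matching in $\SL{V}(K)$ over the given $A$. Thus the verbatim statement of Definition~\ref{def:Gproper} with $G=\SL{V}$ is genuinely problematic, and neither the paper's sentence nor your argument proves it as literally stated: your ramified base change $A\subset A'$ (with ramification index a multiple of $P(m)$) is mathematically sound but proves a variant of the definition in which the DVR may be replaced by a finite extension, while the paper's argument proves the variant in which the matching group is $\GL{V}(K)$. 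Both variants do suffice for the intended application, as you anticipate: the centre $\mathbb{G}_m\subset\GL{V}$ acts trivially on $\Rss$ and acts on $\sheaf{L}=\lambda_{\sheaf{F}}(w\cdot h)$ with weight $\chi(c\cdot w\cdot h)=0$, so every $\SL{V}$-invariant section is $\GL{V}$-invariant, and the closed-image/properness arguments for the Iitaka $G$-fibration tolerate either weakening. In short: no gap in your reasoning relative to the paper --- if anything, you located a gap in the paper's own proof --- but the lemma should be restated (with $\GL{V}(K)$-matching, or allowing finite extensions of $A$) for either argument to be a proof of it.
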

\begin{proof}
Let $A$ be a discrete valuation ring over $\C$ of quotient field $K$. Consider a commutative diagram as follows
\begin{center}
\begin{tikzcd}[column sep=normal]
\Spec(K) \ar[d,"j"] \ar[r,"g"] &  \Rss \ar[d] \\  \Spec(A) \ar[r] & \Spec(\C)
\end{tikzcd}
\end{center}
The map $\Spec(K) \to \Rss$ corresponds to a quotient $[q_K: V \otimes K \to E_K] \in \Rss$, with $E_K$ $\ell$-semistable. By Lemma~\ref{lemma:properness}, we can extend $E_K $ to an $A$-flat family $E$ of $\ell$-semistable sheaves on $X$. This induces a quotient $[q_A : V \otimes A \to E] \in \Rss$, or equivalently an $A$-point $i : \Spec(A) \to \Rss$. As $E \otimes_A K \cong E_K$, $g$ and $i \circ j$ differ by a group element in $G(K)$, which completes the proof. 
\end{proof}

Next we want to define a $G$-semiample line bundle over $\Rss$, i.e. a $G$-equivariant line bundle globally generated by $G$-invariant sections. Let $\sheaf{F}$ denote the universal family of quotients over $\Rss$ with its natural $G$-action induced from $\Rss$. Choose a smooth divisor $D \in \Pi_a$. Since $\sheaf{F}_s(-D) \to \sheaf{F}_s$ is injective for every $s \in \Rss$, we have a short exact sequence
\begin{align}\label{eq:exactSeqFamily}
  0 \to \sheaf{F}(-D) \to \sheaf{F} \to \sheaf{F}|_{\Rss \times D} \to 0
\end{align}
such that $\sheaf{F}|_{\Rss \times D}$ remains $\Rss$-flat, cf. \cite[Thm.~22.5]{matsumura1989commutative}. Furthermore, for $m'$ sufficiently large we may also assume that each fiber of $\sheaf{F}|_{\Rss \times D}$ over $\Rss$ is $m'$-regular. In particular $R^i{p_1}_*(\sheaf{F}|_{\Rss\times D}(m')) = 0$ for $i > 0$ and ${p_1}_*(\sheaf{F}|_{\Rss\times D}(m'))$ is a locally free $G$-equivariant sheaf on $\Rss$ of rank $P'(m') \coloneqq P(c|_D(m'))$. 

Denote by $\mathcal{R}$ the projective frame bundle corresponding to ${p_1}_*(\sheaf{F}|_{\Rss\times D}(m'))$ and let $\pi : \mathcal{R} \to \Rss$ be the natural projection. Then there exists a natural quotient
\begin{align}\label{eq:frameQuotient}
 \sheaf{O}_{\mathcal{R}} \otimes \sheaf{O}_D(-m')^{P'(m')} \to \pi^*(\sheaf{F}|_{\Rss \times D}) \otimes \sheaf{O}_\pi(1) \to 0.
\end{align}
Set $V' \coloneqq \C^{P'(m')}$ and consider the Quot scheme $\Quot_D \coloneqq \Quot(V' \otimes \sheaf{O}_D(-m'),P')$ over $D$. Then the quotient map \eqref{eq:frameQuotient} induces a natural morphism $\Phi : \mathcal{R} \to \Quot_D$ and we have a diagram
\begin{center}
\begin{tikzcd}[column sep=normal]
\mathcal{R} \ar[d,"\pi"] \ar[r,"\Phi"] &  \Quot_D \ar[d,dashed,"\varphi'"]  \\ 
 \Rss \ar[r,dashed] & M_D
\end{tikzcd}
\end{center}
where $M_D$ denotes the Simpson moduli space of Gieseker-semistable sheaves of Hilbert polynomial $P'$ on $D$. Several remarks are in order:
\begin{remark}\label{remark:GITdivisor}
\begin{enumerate}[wide, labelwidth=!,nosep]
\item Let $\sheaf{F}'$ denote the universal family of quotients over $\Quot_D$. For  $l' > 0$, the Grothendieck class $[\sheaf{O}_D(l')] \in K(X)$ gives the following determinant line bundle
\begin{align*}
H_{l'} \coloneqq \lambda_{\sheaf{F}'}([\sheaf{O}_D(l')])
\end{align*}
over $\Quot_D$, see \eqref{eq:determinant}. In fact $H_{l'}$ is very ample for $l' \gg 0$, cf. \cite[Prop.~2.2.5]{huybrechts2010geometry}. Moreover, $\Quot_D$ is endowed with a natural $G' \coloneqq \SL{V'}$ action, which further induces a $G'$-linearization of $H_{l'}$.  Now consider the $G$-stable open subscheme $R_D \subset \Quot_D$ consisting of $m'$-regular Gieseker-semistable quotients $[q: V' \otimes \sheaf{O}_D(-m') \to F]$ such that the induced map $V' \to H^0(F(m'))$ is an isomorphism. By Simpson's construction \cite{simpson1994moduli}, for $l' \gg m' \gg 0$, $R_D$ is the locus of GIT-semistable points of the closure $\overline{R_D} \subset \Quot_D$ with respect to $H_{l'}$. Furthermore, the rational map $\varphi'$ is well-defined over $R_D$ and $R_D \to M_D$ is a good GIT quotient, in the sense of Mumford \cite{mumfordGIT}. 

\item Note that the image of $\Phi$ is contained in the open subscheme $Q \subset \Quot_D$ consisting of quotients $[q:V' \otimes \sheaf{O}_D(-m') \to F]$ such that 
\begin{enumerate}[(i),nosep]
 \item $H^i(F(m'))=0$ for all $i > 0$,
 \item the induced map $V' \to H^0(F(m'))$ is an isomorphism.
\end{enumerate}
Clearly $R_D$ is an open subset of $Q$. 
\item The given $G$-action on $\Rss$ lifts to a natural $G$-action on $\mathcal{R}$. There is also a natural $G'$-action on $\mathcal{R}$ induced by ${p_1}_*(\sheaf{F}|_{\Rss\times D}(m'))$ and which is compatible with its $G$-action. Then $\Phi$ is a $G \times G'$-equivariant morphism. 
\end{enumerate}
\end{remark}

Consider the following Grothendieck class in $K(X)$
\begin{align*}
  w \coloneqq \chi(c(m') \cdot h)[\sheaf{O}_X(l')]-\chi(c(l') \cdot h)[\sheaf{O}_X(m')],
\end{align*}
with $h \coloneqq [O_D] \in K(X)$, which we use to form the determinant line bundle
\begin{align*}
  \sheaf{L} \coloneqq \lambda_{\sheaf{F}}(w \cdot h)
\end{align*}
over $\Rss$, see \eqref{eq:determinant}. Note that $\sheaf{L}$ does not depend on the chosen divisor $D$ in $\Pi_a$, but only on the degree $a$. 

\begin{theorem}\label{thm:semiampleness}
Under the same assumptions as above, for $l' \gg m' \gg 0$ and $a \gg 0$, there exists an integer $\nu > 0$ such that $\sheaf{L}^\nu$ is globally generated by $G$-invariant sections over $\Rss$.  
\end{theorem}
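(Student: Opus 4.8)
The plan is to compare the determinant line bundle $\sheaf{L}$ on $\Rss$ with the Gieseker--Simpson polarization $H_{l'}$ on $\Quot_D$ through the frame bundle $\pi:\mathcal{R}\to\Rss$ and the morphism $\Phi:\mathcal{R}\to\Quot_D$ of Remark~\ref{remark:GITdivisor}, and then to transport a $G'$-invariant GIT section back to a $G$-invariant section on $\Rss$. It suffices to show that for every closed point $s\in\Rss$ there is a $G$-invariant section of some power $\sheaf{L}^{\nu}$ not vanishing at $s$; a single $\nu$ then works, since the corresponding non-vanishing loci are open and cover the quasi-compact scheme $\Rss$.

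First I would record the identity linking the two polarizations. Setting $H_{m'}\coloneqq\lambda_{\sheaf{F}'}([\sheaf{O}_D(m')])$, a direct computation of $\chi$ shows that the weight $w$ was chosen precisely so that $w\cdot h=P'(m')[\sheaf{O}_D(l')]-P'(l')[\sheaf{O}_D(m')]$, whence $\sheaf{L}\cong\lambda_{\sheaf{F}}([\sheaf{O}_D(l')])^{P'(m')}\otimes\lambda_{\sheaf{F}}([\sheaf{O}_D(m')])^{-P'(l')}$ by linearity of the construction \eqref{eq:determinant}. On $\mathcal{R}$ the presentation \eqref{eq:frameQuotient} gives $\Phi^{*}\sheaf{F}'\cong\pi^{*}(\sheaf{F}|_{\Rss\times D})\otimes\sheaf{O}_\pi(1)$, and pushing forward after the twist by $m'$ the frame bundle trivializes $p_{1*}(\Phi^{*}\sheaf{F}'(m'))\cong V'\otimes\sheaf{O}_{\mathcal{R}}$, so that $\Phi^{*}H_{m'}\cong\sheaf{O}_{\mathcal{R}}$ and $\sheaf{O}_\pi(P'(m'))\cong\pi^{*}\lambda_{\sheaf{F}}([\sheaf{O}_D(m')])^{-1}$. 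Feeding this into the analogous computation $\Phi^{*}H_{l'}\cong\pi^{*}\lambda_{\sheaf{F}}([\sheaf{O}_D(l')])\otimes\sheaf{O}_\pi(P'(l'))$ cancels every $\sheaf{O}_\pi$-contribution (up to the usual sign conventions in $\lambda$) and yields the key relation
\[
\Phi^{*}H_{l'}^{\,P'(m')}\cong\pi^{*}\sheaf{L}.
\]
This bookkeeping — matching $\sheaf{L}$ with the frame-bundle pullback of $H_{l'}$ and verifying the exact cancellation of the $\sheaf{O}_\pi$-weights produced by $w$ — is the part I expect to be the main obstacle.

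Next I would bring in the restriction theorem to place the relevant points in the GIT-semistable locus. Fix $s\in\Rss$ with $E\coloneqq\sheaf{F}_s$, which is $(n-1)$-semistable on $X$. By Theorem~\ref{thm:GiesekerRestriction} applied with $\ell=n-1$, for $a\gg0$ a general $D\in\Pi_a$ has $E|_D$ that is $(n-1)$-semistable on $D$; since $\dim D=n-1$ this is exactly Gieseker-semistability on $D$. Using boundedness (Lemma~\ref{lemma:boundedness}) to take $m'$ large enough that $E|_D$ is $m'$-regular with $V'\xrightarrow{\sim}H^{0}(E|_D(m'))$, the morphism $\Phi$ sends the fibre $\pi^{-1}(s)$ into the GIT-semistable locus $R_D\subset\overline{R_D}$ of Remark~\ref{remark:GITdivisor}. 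Hence for a frame $x\in\pi^{-1}(s)$ and some $N>0$ there is a $G'$-invariant section $\sigma\in H^{0}(\overline{R_D},H_{l'}^{N})^{G'}$ with $\sigma(\Phi(x))\neq0$, by the very definition of GIT-semistability with respect to $H_{l'}$; here I use the Simpson identification of GIT-semistability with Gieseker-semistability, which is valid for $l'\gg m'\gg0$.

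Finally I would pull back and descend. Taking $N$ a multiple of $P'(m')$, the displayed identity turns $\Phi^{*}\sigma$ into a section of $\pi^{*}\sheaf{L}^{\nu}$ on $\mathcal{R}$ not vanishing at $x$. Since $\Phi$ is $G\times G'$-equivariant with $G$ acting trivially on $\Quot_D$, the section $\Phi^{*}\sigma$ is both $G$- and $G'$-invariant; as $\pi$ is the projective frame bundle for the $G'$-action and $\sheaf{L}^{\nu}$ is pulled back from $\Rss$, it descends to a section $\bar\sigma\in H^{0}(\Rss,\sheaf{L}^{\nu})^{G}$ with $\bar\sigma(s)\neq0$. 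Running this construction over all $s$, with $D$ and $N$ depending on $s$, and passing to a common power by quasi-compactness produces the required global generation of $\sheaf{L}^{\nu}$ by $G$-invariant sections.
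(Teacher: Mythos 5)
Your proposal is correct and follows essentially the same route as the paper: use Theorem~\ref{thm:GiesekerRestriction} to make $E|_D$ Gieseker-semistable, invoke Simpson's GIT to produce a $G'$-invariant section of a power of $H_{l'}$ non-vanishing at $[E|_D]$, pull it back along the $G\times G'$-equivariant map $\Phi$, and descend along the frame bundle $\pi$, using that $\sheaf{L}$ depends only on $a$ and not on the chosen $D$. The only cosmetic difference is that where the paper cites \cite[Lem.~4.1]{pavel2021moduli} for the $G'$-equivariant isomorphism $H_{l'}^{\nu'}\cong\lambda_{\sheaf{F}'}(w|_D)^{\nu}$ over $Q$ and then uses \eqref{eq:exactSeqFamily} to identify $\sheaf{L}\cong\lambda_{\sheaf{F}|_{\Rss\times D}}(w|_D)$, you establish the equivalent relation $\Phi^{*}H_{l'}^{P'(m')}\cong\pi^{*}\sheaf{L}$ by carrying out that determinant-bundle bookkeeping directly on the frame bundle, which checks out (and your quasi-compactness remark making $\nu$ uniform is a point the paper leaves implicit).
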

\begin{proof}
Let $s \in \Rss$ be a closed point corresponding to an $(n-1)$-semistable quotient sheaf $E$ in $\Rss$. By Theorem~\ref{thm:GiesekerRestriction}, the restriction $E|_D$ to a general divisor $D \in \Pi_a$ is Gieseker-semistable for $a \gg 0$. Then, by Simpson's construction, there exists a $G'$-invariant section $\sigma \in H^0(\overline{R_D},H_{l'}^{\nu'})^{G'}$ non-vanishing at $[E|_D]$ for some $\nu' > 0$. By \cite[Lem.~4.1]{pavel2021moduli}, there exists a $G'$-equivariant isomorphism $H_{l'}^{\nu'} \cong \lambda_{\sheaf{F}'}(w|_D)^\nu$ over $Q$ for some $\nu > 0$. The restriction of $\sigma$ to $Q$ induces a $G'$-invariant section $\sigma_Q$ in $H^0(Q,\lambda_{\sheaf{F'}}(w|_D)^\nu)^{G'}$. Then its pull-back $\Phi^*(\sigma_Q)$ is a global $G \times G'$-invariant section, which descends to a $G$-invariant section in $H^0(\Rss,\lambda_{\sheaf{F}|_{\Rss \times D}}(w|_D)^\nu)^{G}$, as $\pi$ is a good quotient. Applying the determinant map $\lambda_{(-)}(w)$ to the exact sequence \eqref{eq:exactSeqFamily}, we obtain a $G$-equivariant isomorphism
\begin{align*}
  \sheaf{L} = \lambda_{\sheaf{F}}(w \cdot h) \cong \lambda_{\sheaf{F}|_{\Rss \times D}}(w|_D)
\end{align*}
over $\Rss$. From the above considerations, we have constructed a linear map
\begin{align*}
  \Gamma : H^0(\overline{R_D},H_{l'}^{\nu'})^{G'} \to H^0(\Rss, \sheaf{L}^\nu)^G
\end{align*}
such that $\Gamma(\sigma)$ is a $G$-invariant section of $\sheaf{L}^\nu$ non-vanishing at $s$. 
\end{proof}

We have shown above that $\Rss$ is a $G$-proper scheme over $\C$ endowed with a $G$-semiample line bundle $\sheaf{L}$. Since $G$ is reductive, one can form the Iitaka $G$-fibration corresponding to $(\Rss,\sheaf{L})$ as in \cite[Thm.~5.3]{pavel2021moduli}, which will define our moduli space $M$ of $(n-1)$-semistable sheaves of class $c$ on $X$. 

Consider the moduli functor
\begin{align*}
  \mf : \schC^\textnormal{op} \to (\textnormal{Sets})
\end{align*}
that associates to any scheme $S$ of finite type over $\C$ the set of all equivalence classes of $S$-flat families of $(n-1)$-semistable sheaves of class $c$ on $X$. Unfortunately our moduli space $M$ does not (co)represent $\mf$. Nevertheless, we have the following functorial property of $M$:

\begin{theorem}\label{thm:mainThm}
There exists a unique triple $(M, \sheaf{A}, e)$ formed of a projective scheme $M$ over $\C$ endowed with an ample line bundle $\sheaf{A}$ and a natural number $e > 0$ such that there is a natural transformation $\Psi: \mf \to \Hom(-,M)$, that associates to any $\C$-scheme of finite type $S$ and any $S$-flat family $\sheaf{E}$ of $(n-1)$-semistable sheaves of class $c$ on $X$ a classifying morphism $\Psi_\sheaf{E}: S \to M$, satisfying the following properties:
    \begin{enumerate}[(1)]
\item For any $S$-flat family $\sheaf{E}$ of $(n-1)$-semistable sheaves of class $c$ on $X$, the classifying morphism $\Psi_\sheaf{E}$ satisfies
\begin{align*}
    \Psi_\sheaf{E}^*(\sheaf{A}) \cong \lambda_\sheaf{E}(w \cdot h)^e,
\end{align*}
where $\lambda_\sheaf{E}(w \cdot h)$ is the determinant line bundle on $S$ defined by \eqref{eq:determinant}.
\item For any other triple $(M',\sheaf{A}',e')$, with $M'$ a projective scheme over $\C$, $\sheaf{A}'$ an ample line bundle on $M'$ and $e'$ a natural number satisfying property (1), we have $e | e'$ and there exists a unique morphism $\phi: M \to M'$ such that $\phi^*\sheaf{A}' \cong \sheaf{A}^{(e'/e)}$.
\end{enumerate}
\end{theorem}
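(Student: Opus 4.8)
The plan is to read off the triple $(M,\sheaf{A},e)$ from the Iitaka $G$-fibration of the pair $(\Rss,\sheaf{L})$, and then to verify its universal property directly. Recall that $\Rss$ is $G$-proper and that $\sheaf{L}=\lambda_{\sheaf{F}}(w\cdot h)$ is $G$-semiample by Theorem~\ref{thm:semiampleness}. Since $G=\SL{V}$ is reductive, \cite[Thm.~5.3]{pavel2021moduli} furnishes a projective scheme
\[
  M=\Proj \bigoplus_{k\geq 0} H^0(\Rss,\sheaf{L}^{ke})^G
\]
for a suitable $e>0$, together with a $G$-invariant morphism $\pi:\Rss\to M$ and an ample line bundle $\sheaf{A}$ on $M$ with $\pi^*\sheaf{A}\cong\sheaf{L}^e$. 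This fixes the data, and it remains to build $\Psi$ and check properties (1) and (2).

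First I would construct the classifying morphism. Given an $S$-flat family $\sheaf{E}$ of $(n-1)$-semistable sheaves of class $c$, for $m\gg 0$ every fibre is $m$-regular, so $\sheaf{G}\coloneqq{p_1}_*(\sheaf{E}(m))$ is locally free of rank $P(m)$ on $S$ and $\sheaf{E}$ is recovered as the cokernel of the twisted evaluation map. On the frame bundle $\rho:\hat{S}\to S$ of $\sheaf{G}$, the tautological trivialization $V\otimes\sheaf{O}_{\hat S}\cong\rho^*\sheaf{G}$ together with this evaluation defines a $\GL{V}$-equivariant morphism $\hat{S}\to\Rss$. Composing with $\pi$ and using that $\pi$ is $G$-invariant and that scalars act trivially on $\Rss$, the map $\hat{S}\to M$ becomes $\GL{V}$-invariant, hence descends to a morphism $\Psi_{\sheaf{E}}:S\to M$. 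Compatibility of all the constructions with base change shows that $\Psi$ is a natural transformation, independent of the auxiliary $m$.

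To prove (1), I would use that $\lambda$ commutes with base change. Pulling $\sheaf{F}$ back along $\hat{S}\to\Rss$ recovers $\rho^*\sheaf{E}$ under the trivialization, so $\sheaf{L}=\lambda_{\sheaf{F}}(w\cdot h)$ pulls back to $\lambda_{\rho^*\sheaf{E}}(w\cdot h)\cong\rho^*\lambda_{\sheaf{E}}(w\cdot h)$; combined with $\pi^*\sheaf{A}\cong\sheaf{L}^e$, the pullback of $\sheaf{A}$ along $\hat{S}\to M$ is $\rho^*\lambda_{\sheaf{E}}(w\cdot h)^e$. The combination defining $w$ is arranged precisely so that $\chi(c\cdot w\cdot h)=0$, which is the weight of the scalar $\C^*$-action on $\lambda_{\sheaf{E}}(w\cdot h)$; this weight-zero property lets $\lambda_{\sheaf{E}}(w\cdot h)$ descend along $\rho$, yielding $\Psi_{\sheaf{E}}^*\sheaf{A}\cong\lambda_{\sheaf{E}}(w\cdot h)^e$.

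Finally, for (2) the idea is to feed the universal family $\sheaf{F}$ on $\Rss$ into $\Psi$ and into any competitor. By the canonical trivialization built into the definition of $\Rss$, the classifying morphism of $\sheaf{F}$ for $(M,\sheaf{A},e)$ is $\pi$ itself; for any triple $(M',\sheaf{A}',e')$ satisfying (1) one likewise obtains a $G$-invariant morphism $\Psi'_{\sheaf{F}}:\Rss\to M'$ with $(\Psi'_{\sheaf{F}})^*\sheaf{A}'\cong\sheaf{L}^{e'}$. Since $\sheaf{A}'$ is ample and $\Psi'_{\sheaf{F}}$ is $G$-invariant, the section ring $\bigoplus_k H^0(M',\sheaf{A}'^{k})$ embeds $G$-invariantly into $\bigoplus_k H^0(\Rss,\sheaf{L}^{ke'})^G$, and the minimality of $e$ in the Iitaka $G$-fibration forces $e\mid e'$ together with the unique factorization $\phi:M\to M'$ satisfying $\phi^*\sheaf{A}'\cong\sheaf{A}^{e'/e}$; running the argument in both directions gives uniqueness of the triple. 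The hard part will not be this formal step but the descent underlying (1): one must genuinely check the scalar weight $\chi(c\cdot w\cdot h)=0$ and confirm that $\pi$ is the classifying morphism of $\sheaf{F}$, since these are what make the two applications of $\Psi$ mutually inverse on section rings.
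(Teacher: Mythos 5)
Your proposal is correct and follows essentially the same route as the paper: the paper defines $M$ as the Iitaka $G$-fibration of $(\Rss,\sheaf{L})$ via \cite[Thm.~5.3]{pavel2021moduli} and then refers to \cite[Prop.~4.5]{greb2017compact} and \cite[Main Thm.]{pavel2021moduli} for the functorial properties, which are exactly the steps you spell out (frame-bundle descent for $\Psi_{\sheaf{E}}$, the weight-zero condition $\chi(c\cdot w\cdot h)=0$ making the determinant bundle descend, and the universal property of the invariant section ring giving $e \mid e'$ and the factorization $\phi$). Your unpacking of the omitted details is accurate.
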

\begin{proof}
The proof follows exactly as in \cite[Prop.~4.5]{greb2017compact} or \cite[Main Thm.]{pavel2021moduli}. We omit it.
\end{proof}

\subsection*{Geometric points of the moduli space}
We end this section by studying the geometric points of $M$. If $E$ is an $(n-1)$-semistable sheaf on $X$, then there exists a Jordan-H\"older filtration
\begin{align*}
  0 = E_0 \subset E_1 \subset \ldots \subset E_l = E
\end{align*}
such that its factors $E_i/E_{i-1}$ are $(n-1)$-stable with $p_{n-1}(E_i/E_{i-1}) = p_{n-1}(E)$. We denote by $\gr(E) \coloneqq \oplus_i E_i/E_{i-1}$ its corresponding graded torsion-free module. We say that $E$ is $(n-1)$-\textit{polystable} if $E$ is isomorphic to a direct sum of $(n-1)$-stable sheaves.

\begin{lemma}\label{lem:onlypolystable}
Let $E$ be an $(n-1)$-semistable sheaf on $X$ and $\gr(E)$ the graded module corresponding to a Jordan-H\"older filtration of $E$. Then $E$ and $\gr(E)$ give the same point in $M$.
\end{lemma}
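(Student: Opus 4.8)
The plan is to realise $E$ and $\gr(E)$ as the two distinguished fibres of a single flat family over the affine line whose every other fibre is isomorphic to $E$, and then to force the induced classifying morphism to $M$ to be constant, using that $M$ is separated and that the transformation $\Psi$ of Theorem~\ref{thm:mainThm} is compatible with base change. Concretely, I would produce an $\A^1$-flat family $\sheaf{E}$ of $(n-1)$-semistable sheaves of class $c$ on $X$ with $\sheaf{E}_t \cong E$ for every $t \neq 0$ and $\sheaf{E}_0 \cong \gr(E)$; the resulting morphism $\Psi_{\sheaf{E}} : \A^1 \to M$ will then satisfy $[E] = \Psi_{\sheaf{E}}(1) = \Psi_{\sheaf{E}}(0) = [\gr(E)]$.

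To build the family I would use the Rees construction (deformation to the associated graded) applied to the Jordan--H\"older filtration $0 = E_0 \subset E_1 \subset \ldots \subset E_l = E$. Passing to the descending filtration $F^p \coloneqq E_{l-p}$ and setting $\sheaf{E} \coloneqq \sum_{p \geq 0} t^{-p} F^p \subset E \otimes_\C \C[t,t^{-1}]$, viewed as a sheaf on $X \times \A^1$, one checks that $\sheaf{E}$ is torsion-free, hence flat, over $\C[t]$, that $\sheaf{E} \otimes_{\C[t]} \C[t,t^{-1}] \cong E \otimes_\C \C[t,t^{-1}]$ is the constant family over $\mathbb{G}_m = \A^1 \setminus \{0\}$, and that $\sheaf{E}_0 \cong \gr_F = \gr(E)$; this is the standard degeneration, parallel to the Gieseker case in \cite[Sect.~4]{huybrechts2010geometry}. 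Since every factor $E_i/E_{i-1}$ is $(n-1)$-stable with $p_{n-1}(E_i/E_{i-1}) = p_{n-1}(E)$, each fibre of $\sheaf{E}$ is an iterated extension of sheaves sharing the reduced truncated Hilbert polynomial $p_{n-1}(E)$, hence is itself $(n-1)$-semistable of class $c$. Thus $\sheaf{E} \in \mf(\A^1)$ and induces a classifying morphism $\Psi_{\sheaf{E}} : \A^1 \to M$.

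The next step is to observe that $\Psi_{\sheaf{E}}$ is constant on $\mathbb{G}_m$. By the naturality of $\Psi$, restricting $\sheaf{E}$ along a closed point $\iota_t : \Spec \C \to \A^1$ gives $\Psi_{\sheaf{E}} \circ \iota_t = \Psi_{\sheaf{E}_t}$, so $\Psi_{\sheaf{E}}(t)$ depends only on the isomorphism class of the fibre $\sheaf{E}_t$. As $\sheaf{E}|_{X \times \mathbb{G}_m}$ is the constant family with fibre $E$, the morphism $\Psi_{\sheaf{E}}$ sends every closed point of the reduced scheme $\mathbb{G}_m$ to the single $\C$-point $[E] \in M$, and hence, $\mathbb{G}_m$ being reduced, $\Psi_{\sheaf{E}}|_{\mathbb{G}_m}$ is the constant morphism with value $[E]$.

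Finally, since $\A^1$ is reduced and irreducible with $\mathbb{G}_m$ dense, and $M$ is separated (indeed projective, by Theorem~\ref{thm:mainThm}), the morphism $\Psi_{\sheaf{E}}$ and the constant map $[E]$, agreeing on the schematically dense open $\mathbb{G}_m$, must coincide on all of $\A^1$. Evaluating at $t=0$ gives $[\gr(E)] = \Psi_{\sheaf{E}}(0) = [E]$, which is the claim. I expect the main obstacle to be the first step, namely checking that the Rees family $\sheaf{E}$ is genuinely $\C[t]$-flat with the stated generic and special fibres and that $(n-1)$-semistability is preserved fibrewise, so that $\sheaf{E}$ really defines an object of $\mf(\A^1)$; once the family is in place, the separatedness argument is purely formal.
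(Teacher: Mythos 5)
Your proof is correct and takes essentially the same approach as the paper: the paper's own argument is exactly to produce an $\A^1$-flat family of $(n-1)$-semistable sheaves with $\sheaf{E}_0 \cong \gr(E)$ and $\sheaf{E}_t \cong E$ for $t \neq 0$ (a construction it declares ``not difficult'' and omits) and then conclude that the classifying morphism $\Psi_{\sheaf{E}} : \A^1 \to M$ is constant. Your Rees-module construction and the constancy-via-separatedness argument simply supply the details the paper leaves implicit.
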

\begin{proof}
It is not difficult to construct an $\A^1$-flat family $\sheaf{E}$ of $(n-1)$-semistable sheaves on $X$ such that $\sheaf{E}_0 \cong \gr(E)$ and $\sheaf{E}_t \cong E$ for all $t \neq 0$. Then the classifying morphism $\Psi_\sheaf{E} : \A^1 \to M$ corresponding to $\sheaf{E}$ is constant, which shows that $E$ and $\gr(E)$ define the same point in $M$. 
\end{proof}

Thus, it will suffice to analyze the separation of polystable sheaves. If $E$ is $(n-1)$-polystable, we denote by $E^{[n-1]}$ the corresponding torsion-free sheaf satisfying property $(\star)$ given by Lemma~\ref{lemma:simpleDepth}. Note that $E^{[n-1]}$ is also $(n-1)$-polystable as it differs from $E$ only in a finite number of points. We let $l_E$ denote the zero-cycle associated to $E^{[n-1]}/E$.

Next we aim to prove the following result:

\begin{theorem}\label{thm:geometricSep}
Let $E_1$ and $E_2$ be two $(n-1)$-polystable sheaves of class $c$ on $X$. Then $E_1$ and $E_2$ define the same point in $M$ if and only if $E_1^{[n-1]} \cong E_2^{[n-1]}$ and $l_{E_1} = l_{E_2}$.
\end{theorem}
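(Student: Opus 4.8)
The plan is to prove that the assignment $[E]\mapsto (E^{[n-1]},l_E)$ is a complete invariant of the geometric points of $M$ represented by $(n-1)$-polystable sheaves; by Lemma~\ref{lem:onlypolystable} these cover all of $M$, so comparing polystable $E_1,E_2$ suffices. Throughout I would exploit the two ways the construction records information about a sheaf $E$. First, by Theorem~\ref{thm:semiampleness} the $G$-invariant sections of $\sheaf{L}^\nu$ are pulled back, through $\Gamma$, from Simpson sections on the moduli spaces $M_D$ via the restrictions $E|_D$ to general divisors $D\in\Pi_a$ of large degree; since $M_D$ separates $S$-equivalence classes, these sections see exactly the $S$-equivalence class of $E|_D$. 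Second, by Theorem~\ref{thm:mainThm}(1) the classifying morphism satisfies $\Psi_{\sheaf{E}}^{*}\sheaf{A}\cong\lambda_{\sheaf{E}}(w\cdot h)^{e}$, so the ample bundle $\sheaf{A}$ may be computed through the determinant line bundle, which is additive in short exact sequences.

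For the implication $(\Leftarrow)$ assume $E_1^{[n-1]}\cong E_2^{[n-1]}=:F$ and $l_{E_1}=l_{E_2}=:Z$, and set $N\coloneqq\deg Z$. Each $E_i$ sits in an exact sequence $0\to E_i\to F\to Q_i\to 0$ with $Q_i$ a zero-dimensional quotient of $F$ whose cycle is $Z$; since the $(n-1)$-truncated Hilbert polynomial ignores the constant coefficient, every such kernel is again $(n-1)$-semistable of class $c$. Hence both points lie in the image of the classifying morphism $\Psi$ defined on the Quot scheme $\Quot(F,N)$ of zero-dimensional length-$N$ quotients of $F$, and $[Q_1],[Q_2]$ lie in a single (connected) fibre of the Hilbert--Chow morphism $\Quot(F,N)\to\mathrm{Sym}^{N}(X)$. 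I would then show $\Psi$ is constant along this fibre: over $\Quot(F,N)$ additivity gives $\lambda_{\sheaf{E}}(w\cdot h)\cong\lambda_{F}(w\cdot h)\otimes\lambda_{\sheaf{Q}}(w\cdot h)^{-1}$, where $\lambda_{F}(w\cdot h)$ is trivial because $F$ is constant, and the decisive point is that $\lambda_{\sheaf{Q}}(w\cdot h)$ for a flat family of zero-dimensional sheaves of fixed length is pulled back from $\mathrm{Sym}^{N}(X)$, hence trivial on a Hilbert--Chow fibre. This forces $\Psi_{\sheaf{E}_1}=\Psi_{\sheaf{E}_2}$ and thus $[E_1]=[E_2]$ in $M$.

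For $(\Rightarrow)$ I would argue by contraposition in two cases. If $E_1^{[n-1]}\not\cong E_2^{[n-1]}$, then for general $D\in\Pi_a$ one has $E_i|_D=E_i^{[n-1]}|_D$ (as $D$ avoids the finite supports of $l_{E_i}$), and by Theorem~\ref{thm:restrictionStable} applied to the $(n-1)$-stable summands these restrictions are Gieseker-polystable on $D$. A reconstruction argument as in \cite{greb2017compact}—recovering a polystable sheaf from the $S$-equivalence classes of its restrictions to general divisors of large degree—yields $E_1^{[n-1]}|_D\not\cong E_2^{[n-1]}|_D$ for suitable $D$, so $E_1|_D$ and $E_2|_D$ are distinct points of $M_D$ and are separated by a Simpson section, which through $\Gamma$ (Theorem~\ref{thm:semiampleness}) separates $[E_1]$ from $[E_2]$. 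If instead $E_1^{[n-1]}\cong E_2^{[n-1]}=F$ but $l_{E_1}\neq l_{E_2}$, the general-divisor sections no longer distinguish them, and I would detect the cycle directly: by the computation of the previous paragraph the restriction of $\lambda_{\sheaf{E}}(w\cdot h)$ to $\Quot(F,N)$ descends to a line bundle on $\mathrm{Sym}^{N}(X)$ which separates distinct zero-cycles, so $[E_1]\neq[E_2]$ in $M$.

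The main obstacle, underlying both directions, is the cycle analysis: identifying $\lambda_{\sheaf{Q}}(w\cdot h)$ for a flat family of zero-dimensional sheaves with the pullback along Hilbert--Chow of a suitably positive line bundle on $\mathrm{Sym}^{N}(X)$, and checking that this descended bundle separates cycles while remaining blind to the finer scheme structure of the quotient $Q$. This is precisely the Donaldson--Uhlenbeck-type behaviour of the determinant line bundle along the truncation level $\ell=n-1$, and is where essentially all of the technical work lies; by contrast, the reconstruction of the polystable part $E^{[n-1]}$ from general hyperplane restrictions follows formally from the restriction theorem together with Simpson's separation of $S$-equivalence classes on divisors.
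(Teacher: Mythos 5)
Your decomposition is the same as the paper's: (i) separate sheaves with $E_1^{[n-1]}\ncong E_2^{[n-1]}$ by restricting to general high-degree divisors and pulling back Simpson sections through $\Gamma$; (ii) separate sheaves with $l_{E_1}\neq l_{E_2}$; (iii) identify sheaves with the same pair $(F,\xi)$ by showing the classifying morphism is constant on the fibre $\rho^{-1}(\xi)$ of the Quot-to-Chow morphism $\rho:\Quot(F,N)\to S^N(X)$. Step (i) is fine in outline (the paper replaces your appeal to Greb--Toma by showing that $\Hom(E_1^{[n-1]},E_2^{[n-1]})\to\Hom(E_1^{[n-1]}|_D,E_2^{[n-1]}|_D)$ is an isomorphism for $a\gg 0$, as in Lemma~\ref{lemma:restrictionSimple}, so non-isomorphism survives restriction). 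But (iii) contains a genuine gap: you assert parenthetically that the fibre of the Quot-to-Chow morphism is connected. For the Quot scheme of an arbitrary torsion-free sheaf on a higher-dimensional variety this is not a standard citable fact; it is the content of the paper's final theorem, proved by induction on the length, where $\Quot_\xi(F,l+1)$ is exhibited as the image of the connected scheme $Z=\CP(\sheaf{K})$, a projective bundle over $\Quot_\xi(F,l)\times X$, via a surjection constructed as in \cite{ellingsrud99irreducibility}. Without this, $[Q_1]$ and $[Q_2]$ could a priori lie in different components, and constancy on each component gives nothing. Separately, your ``decisive point'' (global descent of $\lambda_{\sheaf{Q}}(w\cdot h)$ to $S^N(X)$) is unproven, but for (iii) it is also unnecessary: one only needs to choose $D\in\Pi_a$ avoiding $\Supp(\xi)$, so that every kernel in the fibre restricts to $F|_D$; then the restricted family is constant, its classifying map to the Simpson space $M_D$ is constant, and $\lambda_{\sheaf{K}|_C}(w\cdot h)\cong\Psi_{\sheaf{K}}^*\lambda(w|_D)$ is trivial (equivalently, $[\sheaf{Q}]\cdot p_2^*[\sheaf{O}_D]=0$ in K-theory because the supports are disjoint). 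This is the paper's route, and it avoids any Donaldson--Uhlenbeck-type descent.

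The more serious gap is (ii), where your descent claim is genuinely load-bearing and, even if granted, would not finish the argument. Points of $M$ are points of the Iitaka $G$-fibration of $(\Rss,\sheaf{L})$, so $E_1$ and $E_2$ are separated in $M$ only if there exist $G$-invariant sections of powers of $\sheaf{L}$ over all of $\Rss$ whose ratios distinguish them. A line bundle on $S^N(X)$ that separates zero-cycles gives, at best, sections over (the image of) the fixed Quot scheme $\Quot(F,N)$, and nothing in your argument lifts or extends these to $G$-invariant sections on $\Rss$. This lifting problem is exactly what the argument of Jun Li \cite[Lem.~3.6]{li1993algebraic} solves --- roughly, by producing invariant sections attached to divisors passing through points of one cycle and avoiding the other --- and it is precisely what the paper cites for this case. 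As written, your Case 2 restates the desired conclusion (``the descended bundle separates cycles, so $[E_1]\neq[E_2]$'') rather than proving it.
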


\begin{remark}
Let $E$ be an $(n-1)$-semistable sheaf on $X$ and $\gr(E)$ the graded module corresponding to a Jordan-H\"older filtration of $E$. Note that $\gr(E)$ is uniquely determined only in codimension $n-1$. However, one can use the result above to show that the pair $(\gr(E)^{[n-1]},l_{\gr(E)})$ is in fact uniquely determined by $E$. Indeed, suppose we have two graded modules $\gr_1(E)$ and $\gr_2(E)$ corresponding to two different Jordan-H\"older filtrations of $E$. Then, by Lemma~\ref{lem:onlypolystable} and Theorem~\ref{thm:geometricSep}, we must have $\gr_1(E)^{[n-1]} \cong \gr_2(E)^{[n-1]}$ and $l_{\gr_1(E)} = l_{\gr_2(E)}$.
\end{remark}

We divided the proof of Theorem~\ref{thm:geometricSep} in a sequence of lemmas.

\begin{lemma}
Let $E_1$ and $E_2$ be two $(n-1)$-polystable sheaves of class $c$ on $X$ such that $E_1^{[n-1]} \ncong E_2^{[n-1]}$. Then $E_1$ and $E_2$ give distinct points in $M$.
\end{lemma}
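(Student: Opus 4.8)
The plan is to separate $E_1$ and $E_2$ after restricting to a general divisor $D \in \Pi_a$, using that every $G$-invariant section of $\sheaf{L}^\nu$ constructed in Theorem~\ref{thm:semiampleness} is pulled back via $\Gamma$ from the Simpson moduli space $M_D$. Since $E_i$ and $E_i^{[n-1]}$ differ only at the finitely many points of $l_{E_i}$, a general $D$ satisfies $E_i|_D = E_i^{[n-1]}|_D$; moreover, writing each $E_i$ as a direct sum of $(n-1)$-stable sheaves, Theorem~\ref{thm:restrictionStable} ensures that each summand restricts to an $(n-1)$-stable, hence Gieseker-stable, sheaf on a general $D$ of large degree, so that $E_i|_D$ is Gieseker-polystable and its class in $M_D$ is its isomorphism class. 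Thus it suffices to prove the separation claim that, if $E_1^{[n-1]} \ncong E_2^{[n-1]}$, then $E_1^{[n-1]}|_D \ncong E_2^{[n-1]}|_D$ for a general $D$ of sufficiently large degree.

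I would prove this claim by contraposition through a cohomological vanishing. Put $\sheaf{H} \coloneqq \sheafHom(E_1^{[n-1]},E_2^{[n-1]})$, which is torsion-free and again satisfies \eqref{eq:depthProp} by \cite[Tag~0EBC]{stacks-project}. By Auslander--Buchsbaum this gives $\hd_x(\sheaf{H}) \leq n-2$ at every point, hence $\Exts^{n-1}(\sheaf{H},\omega_X)=0$; the local-to-global spectral sequence together with Serre vanishing then yields $\Ext^{n-1}(\sheaf{H},\omega_X(a)) = H^0(\Exts^{n-1}(\sheaf{H},\omega_X)(a)) = 0$ for $a \gg 0$, so by Serre duality $H^1(X,\sheaf{H}(-a))=0$. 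Since also $H^0(X,\sheaf{H}(-a))=0$ for $a \gg 0$, the sequence $0 \to \sheaf{H}(-a) \to \sheaf{H} \to \sheaf{H}|_D \to 0$ shows that restriction to a general $D$ induces an isomorphism $\Hom(E_1^{[n-1]},E_2^{[n-1]}) \xrightarrow{\sim} \Hom(E_1^{[n-1]}|_D,E_2^{[n-1]}|_D)$, and likewise after interchanging $E_1$ and $E_2$ and for the two endomorphism sheaves. Hence an isomorphism $E_1^{[n-1]}|_D \cong E_2^{[n-1]}|_D$ and its inverse would lift to maps between $E_1^{[n-1]}$ and $E_2^{[n-1]}$ whose compositions restrict to the identity on $D$; the injectivity of $\End(E_i^{[n-1]}) \to \End(E_i^{[n-1]}|_D)$ would force these compositions to be the identity, giving $E_1^{[n-1]} \cong E_2^{[n-1]}$, a contradiction. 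This proves the claim.

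Granting the claim, I fix a general smooth integral $D$ of large degree with $E_1|_D \ncong E_2|_D$; as noted these restrictions are Gieseker-polystable, so $[E_1|_D] \neq [E_2|_D]$ in $M_D$. Because the bundle descended from $H_{l'}$ is ample on the projective scheme $M_D$, a suitable power is very ample, so there is a $G'$-invariant section $\sigma \in H^0(\overline{R_D},H_{l'}^{\nu'})^{G'}$ vanishing at $[E_2|_D]$ but not at $[E_1|_D]$. Applying the map $\Gamma$ of Theorem~\ref{thm:semiampleness} yields a $G$-invariant section $\Gamma(\sigma) \in H^0(\Rss,\sheaf{L}^{\nu})^{G}$; by construction its value at a point corresponding to a sheaf $E$ is governed by that of $\sigma$ at $[E|_D]$, so $\Gamma(\sigma)$ is non-zero at the point of $E_1$ and zero at the point of $E_2$. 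As $M$ is the Iitaka $G$-fibration of $(\Rss,\sheaf{L})$ and is therefore determined by the $G$-invariant sections of the powers of $\sheaf{L}$, this section separates $E_1$ from $E_2$, and so they give distinct points in $M$.

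The main obstacle I anticipate is the separation claim of the second paragraph, that is, honestly establishing $H^1(X,\sheaf{H}(-a))=0$ --- precisely the place where property \eqref{eq:depthProp} enters through $\Exts^{n-1}(\sheaf{H},\omega_X)=0$ --- and verifying that $\sheafHom$ commutes with restriction to a general $D$, so that $H^0(\sheaf{H}|_D)$ genuinely computes $\Hom(E_1^{[n-1]}|_D,E_2^{[n-1]}|_D)$.
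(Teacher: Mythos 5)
Your proposal is, in strategy, the paper's own proof: the same reduction to a general divisor $D$ (via Theorem~\ref{thm:restrictionStable} and \cite[Lem.~1.1.13]{huybrechts2010geometry}) on which $E_i|_D = E_i^{[n-1]}|_D$ is Gieseker-polystable; the same cohomological lifting argument (Auslander--Buchsbaum, vanishing of $\Exts^{n-1}$, Serre duality plus the local-to-global spectral sequence and Serre vanishing) showing that an isomorphism of the restrictions would force $E_1^{[n-1]} \cong E_2^{[n-1]}$; and the same endgame: non-isomorphic polystable sheaves give distinct points of the Simpson moduli space $M_D$, hence a separating $G'$-invariant section $\sigma$ of $H_{l'}^{\nu'}$, whose image $\Gamma(\sigma) \in H^0(\Rss,\sheaf{L}^\nu)^G$ under the map of Theorem~\ref{thm:semiampleness} separates $E_1$ from $E_2$ in $M$.

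The one divergence is precisely the step you flag as your main obstacle, and it is worth recording how the paper sidesteps it. You run the Hom-comparison through the sheaf $\sheaf{H} = \sheafHom(E_1^{[n-1]},E_2^{[n-1]})$ and the sequence $0 \to \sheaf{H}(-a) \to \sheaf{H} \to \sheaf{H}|_D \to 0$, which obliges you to prove that the base-change map $\sheaf{H}|_D \to \sheafHom(E_1^{[n-1]}|_D,E_2^{[n-1]}|_D)$ is an isomorphism. This is in fact true for general $D$ (it suffices that $D$ avoid the finitely many associated points of $E_1^{[n-1]}$, $E_2^{[n-1]}$ and of $\Exts^1(E_1^{[n-1]},E_2^{[n-1]})$; the obstruction term is a $\Tor_1$ of $\Exts^1$ along $\sheaf{O}_D$), but it is an extra argument you have not supplied. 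The paper, arguing ``as in the proof of Lemma~\ref{lemma:restrictionSimple}'', avoids the issue entirely: apply $\Hom(E_1^{[n-1]},-)$ to $0 \to E_2^{[n-1]}(-D) \to E_2^{[n-1]} \to E_2^{[n-1]}|_D \to 0$, and note that adjunction for the closed immersion $i \colon D \hookrightarrow X$ identifies $\Hom_X(E_1^{[n-1]}, i_*(E_2^{[n-1]}|_D))$ with $\Hom_D(E_1^{[n-1]}|_D,E_2^{[n-1]}|_D)$ with no base change needed. Then your own vanishings, $\Hom(E_1^{[n-1]},E_2^{[n-1]}(-D))=0$ (torsion-freeness) and $\Ext^1(E_1^{[n-1]},E_2^{[n-1]}(-D))=0$ (the identical Serre-duality computation, using that $E_2^{[n-1]}$ satisfies \eqref{eq:depthProp}), give injectivity and surjectivity of the restriction map on $\Hom$, and likewise for the three other Hom and End spaces you need. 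With that substitution your argument closes and coincides with the paper's.
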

\begin{proof}
By Theorem~\ref{thm:restrictionStable} and \cite[Lem.~1.1.13]{huybrechts2010geometry}, we can choose a general divisor $D \in \Pi_a$, with $a \gg 0$, such that
\begin{enumerate}[nosep]
  \item $E_1|_D$ and $E_2|_D$ remain polystable,
  \item $E_1|_D = E_1^{[n-1]}|_D$ and $E_2|_D = E_2^{[n-1]}|_D$ are torsion-free and satisfy $(\star)$.
\end{enumerate}
As in the proof of Lemma~\ref{lemma:restrictionSimple}, one can show that
\begin{align*}
  \Hom(E_1^{[n-1]}, E_2^{[n-1]}) \to \Hom(E_1^{[n-1]}|_D,E_2^{[n-1]}|_D)
\end{align*}
is an isomorphism for $a \gg 0$. By assumption $E_1^{[n-1]} \ncong E_2^{[n-1]}$, and thus $E_1|_D$ and $E_2|_D$ are non-isomorphic polystable sheaves on $D$. Then $E_1|_D$ and $E_2|_D$ give distinct points in the Simpson moduli space $M_D$ of Gieseker-semistable sheaves on $D$. Hence there exists a section $\sigma \in H^0(\overline{R}_D,H_{l'}^{\nu'})^{G'}$, for some $\nu' > 0$, that separates $E_1|_D$ and $E_2|_D$ (see Remark~\ref{remark:GITdivisor} for notation). Following the proof of Theorem~\ref{thm:semiampleness}, we find a section $\Gamma(\sigma) \in H^0(\Rss, \sheaf{L}^\nu)^G$, for $\nu > 0$, that separates $E_1$ and $E_2$ in $\Rss$. This implies that $E_1$ and $E_2$ give distinct points in $M$.
\end{proof}

\begin{lemma}
Let $E_1$ and $E_2$ be two $(n-1)$-polystable sheaves of class $c$ on $X$ such that $l_{E_1} \neq l_{E_2}$. Then $E_1$ and $E_2$ give distinct points in $M$.
\end{lemma}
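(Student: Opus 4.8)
The plan is to separate $E_1$ and $E_2$ by constructing a section of $\sheaf{L}^\nu$ that distinguishes their zero-cycles, exploiting the fact that the restriction to a general divisor $D$ "sees" the length of $E^{[n-1]}/E$ along $D$. The key geometric idea is that, while $E_i^{[n-1]}|_D$ captures the polystable type (the moduli point in $M_D$), the quotient sheaf $E_i|_D$ itself differs from $E_i^{[n-1]}|_D$ precisely by the restriction of the zero-cycle $l_{E_i}$ to $D$. Concretely, first I would choose a general divisor $D \in \Pi_a$ with $a \gg 0$ such that, by Theorem~\ref{thm:restrictionStable} and \cite[Lem.~1.1.13]{huybrechts2010geometry}, both $E_i|_D$ remain polystable, both $E_i^{[n-1]}|_D$ are torsion-free satisfying $(\star)$, and moreover $D$ meets the supports of $l_{E_1}$ and $l_{E_2}$ in a way that detects their difference. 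Since $l_{E_1} \neq l_{E_2}$ as zero-cycles, a general divisor of large degree will separate them: there is a point in the support of one cycle but not the other (or the multiplicities differ), and a general $D$ of high degree passes through a chosen such point while avoiding the rest.

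The central step is to relate the moduli point of $E_i$ in $M$ to the \emph{sheaf} $E_i|_D$ on $D$ (not merely to its double-dual-type representative $E_i^{[n-1]}|_D$). Recall from the construction that $\sheaf{L} \cong \lambda_{\sheaf{F}|_{\Rss \times D}}(w|_D)$, so the value of a section of $\sheaf{L}^\nu$ at the point $[E_i]$ is governed by the Gieseker moduli point $[E_i|_D] \in M_D$ of the restricted sheaf. Now $E_i|_D$ and $E_i^{[n-1]}|_D$ have the same polystable double dual but differ by a torsion subsheaf supported on $D \cap \Supp(l_{E_i})$; in the Simpson moduli space $M_D$ these give different points precisely when the associated zero-dimensional data differ. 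Thus I would argue that, for suitable $D$, the restrictions $E_1|_D$ and $E_2|_D$ define distinct points in $M_D$, so there is a $G'$-invariant section $\sigma \in H^0(\overline{R_D}, H_{l'}^{\nu'})^{G'}$ separating them, and then transport it via the map $\Gamma$ of Theorem~\ref{thm:semiampleness} to a section $\Gamma(\sigma) \in H^0(\Rss, \sheaf{L}^\nu)^G$ separating $E_1$ and $E_2$, hence distinct points in $M$.

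The main obstacle I expect is the second step: carefully controlling how the zero-cycle $l_{E_i}$ restricts to a general divisor and verifying that this restriction is faithfully recorded by the Gieseker moduli point $[E_i|_D]$ in $M_D$. One must ensure that $D$ is chosen so that $E_i|_D$ genuinely retains torsion reflecting $l_{E_i} \cdot D$, rather than having that information collapse; this requires that $D$ passes through points of the cycle with the correct multiplicity while remaining general enough for the polystability and $(\star)$ conditions to persist. The delicate point is that the difference $E_i^{[n-1]}|_D / E_i|_D$ is a length-$(\deg D \cdot l_{E_i})$ torsion sheaf on $D$, and since Gieseker-(semi)stability on $D$ is sensitive to such torsion through the lower-order coefficients of the Hilbert polynomial, two restrictions with differing cycle data will land at different points of $M_D$. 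Once this sensitivity is established, the separation in $M$ follows formally by the same section-pulling mechanism already used in the preceding lemma, so the remaining work is essentially the intersection-theoretic bookkeeping of how $l_{E_1} \neq l_{E_2}$ survives restriction to a general high-degree divisor.
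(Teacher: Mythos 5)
Your overall strategy---producing $G$-invariant sections of $\sheaf{L}^\nu$ from divisors $D$ chosen according to the cycles---is indeed the spirit of the argument the paper points to (it cites Li, \cite[Lem.~3.6]{li1993algebraic}, and omits the proof), but your central mechanism is wrong. When $D$ meets $\Supp(l_{E_1})$, the restriction $E_1|_D$ does not define a point of $M_D$ at all: from $0 \to E_1 \to E_1^{[n-1]} \to Q \to 0$ with $Q$ zero-dimensional, restriction to $D$ gives the exact sequence $0 \to \Tor_1^{\sheaf{O}_X}(Q,\sheaf{O}_D) \to E_1|_D \to E_1^{[n-1]}|_D \to Q|_D \to 0$, and $\Tor_1^{\sheaf{O}_X}(Q,\sheaf{O}_D) \neq 0$ as soon as $D$ meets $\Supp(Q)$. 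So $E_1|_D$ acquires a zero-dimensional torsion subsheaf, is not pure, hence not Gieseker-semistable, and is therefore \emph{excluded} from $M_D$ rather than sent to a different point of it. Note also that both restrictions have the same Hilbert polynomial $P(c)-P(c(-a))$, so your claim that Gieseker stability detects the cycle ``through the lower-order coefficients of the Hilbert polynomial'' cannot be the operative mechanism: what fails is purity, not a polynomial inequality between the two sheaves. The correct mechanism, which your proposal never states, is GIT-theoretic: the point of $\Quot_D$ corresponding to $E_1|_D$ is GIT-unstable in Simpson's setup, and every $G'$-invariant section of every power $H_{l'}^{\nu'}$ vanishes on the unstable locus of $\overline{R_D}$; hence $\Gamma(\sigma)$ vanishes at $[E_1]$, while $\sigma$ can be chosen non-vanishing at the polystable point $[E_2|_D]\in R_D$, so $\Gamma(\sigma)$ does not vanish at $[E_2]$. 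Separation is by vanishing versus non-vanishing of a single section, not by landing at two distinct points of $M_D$. (To run even this, one must check that the quotient corresponding to $E_1|_D$ lies in $\overline{R_D}$, where $\sigma$ is defined---an issue your proposal does not see because it never leaves the semistable locus.)

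Even with that repaired, your choice of $D$ breaks down in an essential case. One may assume $l_{E_1}$ and $l_{E_2}$ have the same length (otherwise the classes of $E_1^{[n-1]}$ and $E_2^{[n-1]}$ differ, so $E_1^{[n-1]} \ncong E_2^{[n-1]}$ and the preceding lemma already separates the points); so the genuinely new situation is equal length but different cycles, which includes the possibility of \emph{equal supports} with different multiplicities, e.g.\ $l_{E_1}=2x+y$ and $l_{E_2}=x+2y$. Then every divisor through a point where the multiplicities differ meets both cycles, both restrictions acquire torsion and become unstable, and the section vanishes at \emph{both} points: the vanishing/non-vanishing dichotomy cannot separate them. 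Li's argument handles precisely this by a finer, quantitative analysis (the multiplicity of the cycle at the point is reflected in the order of degeneration, not in a yes/no vanishing statement), and any complete proof must do something comparable. Your proposal mentions that multiplicities may differ but offers no mechanism for this case, so as written it proves the lemma only when the supports of the two cycles are distinct.
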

\begin{proof}
The proof goes as in \cite[Lem.~3.6]{li1993algebraic}, we omit it. 
\end{proof}

\begin{lemma}
Let $E_1$ and $E_2$ be two $(n-1)$-polystable sheaves of class $c$ on $X$ such that $E_1^{[n-1]} \cong E_2^{[n-1]}$ and $l_{E_1} = l_{E_2}$. Then $E_1$ and $E_2$ define the same point in $M$.
\end{lemma}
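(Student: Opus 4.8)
The plan is to show that two $(n-1)$-polystable sheaves with isomorphic $E_i^{[n-1]}$ and equal zero-cycles $l_{E_i}$ map to the same point in $M$. The strategy mirrors the complementary lemma just proved: since $M$ is constructed via the Iitaka $G$-fibration of $(\Rss, \sheaf{L})$, two sheaves give the same point precisely when no $G$-invariant section of any power $\sheaf{L}^\nu$ separates them. So the goal is to show that \emph{every} $G$-invariant section of $\sheaf{L}^\nu$ takes the same value on the points corresponding to $E_1$ and $E_2$. The essential idea is to connect $E_1$ and $E_2$ by a family over a base such that the induced classifying morphism into $M$ is constant, in the spirit of Lemma~\ref{lem:onlypolystable}.

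\textbf{Reduction to a common sheaf.} First I would exploit the hypothesis $E_1^{[n-1]} \cong E_2^{[n-1]} =: F$ together with $l_{E_1} = l_{E_2} =: Z$. Since each $E_i$ is recovered from $F$ by passing to a subsheaf determined by a choice of local data at the (finitely many) points in the support of $Z$, both $E_1$ and $E_2$ sit inside $F$ as subsheaves with $F/E_i$ supported on $Z$ and of the prescribed lengths. The key observation is that the space of such subsheaves of $F$ with fixed quotient cycle $Z$ is parametrized by a connected (indeed irreducible, a product of Quot schemes or punctual Hilbert-type schemes localized at the points of $Z$) scheme $T$. Over $T$ there is a tautological flat family $\sheaf{E}$ of $(n-1)$-semistable sheaves, all isomorphic to $F$ away from $Z$, with $\sheaf{E}_{t_1} \cong E_1$ and $\sheaf{E}_{t_2} \cong E_2$ for two points $t_1, t_2 \in T$.

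\textbf{Constancy of the classifying map.} Next I would examine the classifying morphism $\Psi_\sheaf{E}: T \to M$. By property (1) of Theorem~\ref{thm:mainThm}, we have $\Psi_\sheaf{E}^*(\sheaf{A}) \cong \lambda_\sheaf{E}(w \cdot h)^e$, so it suffices to show this determinant line bundle is trivial on $T$ and that $\Psi_\sheaf{E}$ contracts $T$ to a point. The determinant line bundle $\lambda_\sheaf{E}(w \cdot h)$ is built from the class $w \cdot h = w \cdot [\sheaf{O}_D]$, which only sees the restriction $\sheaf{E}|_{T \times D}$ to a general divisor $D \in \Pi_a$ for $a \gg 0$. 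But for such $D$ avoiding the finite support of $Z$, all the restrictions $\sheaf{E}_t|_D$ are isomorphic to the single sheaf $F|_D$, independent of $t \in T$. Hence $\sheaf{E}|_{T \times D}$ is, after twisting, pulled back from $D$ alone, forcing $\lambda_\sheaf{E}(w \cdot h)$ to be trivial on $T$ and $\Psi_\sheaf{E}$ to be constant. Consequently $E_1$ and $E_2$, lying in the same fiber, define the same point in $M$.

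\textbf{The main obstacle.} The hard part will be making the family $\sheaf{E}$ over $T$ genuinely flat and $(n-1)$-semistable at every point, and more delicately verifying that the restriction to a general $D$ is \emph{literally} independent of $t$ rather than merely sharing a Hilbert polynomial. The cleanest route is to choose $D \in \Pi_a$ general and disjoint from $\Supp(Z)$ (possible since $Z$ is a zero-cycle and $a \gg 0$), so that $\sheaf{E}_t \to \sheaf{E}_t|_D$ factors through $F \to F|_D$ identically for all $t$; this pins down $\sheaf{E}|_{T \times D} \cong p_D^*(F|_D)$ up to a line bundle pulled back from $T$, and one must check that the determinant construction kills that ambiguity. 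I would verify this using the base-change compatibility of $\lambda_{(-)}$ noted after \eqref{eq:determinant}, reducing the claim to the triviality of the induced map on the relevant Picard groups.
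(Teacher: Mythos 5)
Your overall strategy is the same as the paper's: realize both $E_1$ and $E_2$ as points of the space of subsheaves of $F \coloneqq E_1^{[n-1]}$ with zero-dimensional quotient of cycle $\xi \coloneqq l_{E_1}$ (equivalently, as points of the fiber $\rho^{-1}(\xi)$ of the Quot-to-Chow morphism $\rho : \Quot(F,l) \to S^l(X)$), then show the classifying morphism to $M$ is constant on that parameter space by restricting the universal kernel family to a general divisor $D \in \Pi_a$ avoiding $\Supp(\xi)$. Your treatment of the constancy step is essentially correct and matches the paper: since every fiber restricts to the \emph{same} polystable sheaf $F|_D$, the induced map to the Simpson moduli space $M_D$ is constant, and the line-bundle ambiguity you rightly worry about (the family over $T \times D$ need only be a pullback up to a twist from $T$) is killed exactly because $w|_D \in c^\perp \cap \{1,h|_D,\ldots\}^{\perp\perp}$, so the determinant bundle $\lambda_{\sheaf{E}|_{T\times D}}(w|_D)$ is pulled back from $M_D$ via the (constant) classifying map; triviality of $\Psi_{\sheaf{E}}^*(\sheaf{A})$ on a connected projective base then forces $\Psi_{\sheaf{E}}$ to be constant.

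The genuine gap is your parenthetical claim that the parameter space $T$ is connected, ``indeed irreducible, a product of Quot schemes or punctual Hilbert-type schemes localized at the points of $Z$.'' Connectedness of $T$ is precisely the hard point, and it cannot be asserted: the whole argument only identifies the points of $M$ corresponding to sheaves lying in the \emph{same connected component} of $\rho^{-1}(\xi)$, so without connectedness the proof shows nothing about $E_1$ versus $E_2$. Moreover, the stronger irreducibility you invoke is false in general: punctual Hilbert and Quot schemes on varieties of dimension $\geq 3$ (which is the relevant case here, since $n = \dim X$ can be arbitrary) are reducible for large length, so no appeal to irreducibility of ``punctual Hilbert-type schemes'' can work. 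The paper closes this gap with a separate theorem, proved by induction on $l$: one constructs the projective bundle $Z = \CP(\sheaf{K})$ over $\Quot_\xi(F,l) \times X$, where $\sheaf{K}$ is the universal kernel, and exhibits a surjection $Z \to \Quot_\xi(F,l+1)$ (sending $[\mu : K \to k(x)]$ to $[F \to F/\Ker(\mu)]$, following Ellingsrud--Lehn \cite{ellingsrud99irreducibility}); surjectivity from a connected source propagates connectedness from length $l$ to length $l+1$. Some such argument is an indispensable ingredient, and your proposal is incomplete without it.
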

\begin{proof}
Set $F \coloneqq E_1^{[n-1]} \cong E_2^{[n-1]}$, $\xi \coloneqq l_{E_1} = l_{E_2}$ and $l \coloneqq \length(\xi)$. Then $E_1$ and $E_2$ correspond to two closed points $y_1$ and $y_2$ respectively in the Quot scheme $\Quot(F,l)$. Denote by $\rho : \Quot(F,l) \to S^l(X)$ the Quot-to-Chow morphism that associates to any zero-dimensional quotient of $F$ of length $l$ its corresponding zero-cycle. By assumption $y_1$ and $y_2$ lie inside the fiber $\rho^{-1}(\xi)$. 

Suppose that $y_1$ and $y_2$ lie in the same connected component $C$ of $\rho^{-1}(\xi)$. Let $\sheaf{U}$ be the universal family of quotients over $\Quot(F,l)$, which fits in a short exact sequence
\begin{align*}
  0 \to \sheaf{K} \to \sheaf{O}_{\Quot} \otimes F \to \sheaf{U} \to 0
\end{align*}
such that $\sheaf{K}$ is flat over $\Quot(F,l)$. As $F$ is $(n-1)$-semistable, it follows that $\sheaf{K}|_C$ is a $C$-flat family of $(n-1)$-semistable sheaves of class $c$ on $X$. Choose a general divisor $D \in \Pi_a$ avoiding the support of $\xi$ such that $F|_D$ is Gieseker-polystable, which exists for $a \gg 0$ by Theorem~\ref{thm:restrictionStable}. Then there exists a short exact sequence
\begin{align}\label{eq:shortK}
  0 \to \sheaf{K}|_C(-D) \to \sheaf{K}|_C \to \sheaf{K}|_{C \times D} \to 0 
\end{align}
such that $\sheaf{K}|_{C \times D}$ is a $C$-flat family of fiber $F|_D$, cf. \cite[Thm.~22.5]{matsumura1989commutative}. This induces a constant morphism $\Psi_\sheaf{K} : C \to M_D$, where $M_D$ denotes the Simpson moduli space of Gieseker-semistable sheaves of Hilbert polynomial $P(F|_D)$ on $D$. As $w|_D \in c^{\perp} \cap \{1,h|_D,\ldots, h|_D^{n-1}\}^{\perp \perp}$, there exists a natural determinant line bundle $\lambda(w|_D)$ over $M_D$ such that
\begin{align*}
  \Psi_\sheaf{K}^*(\lambda(w|_D)) \cong \lambda_{\sheaf{K}|_{C \times D}}(w|_D) \cong \lambda_{\sheaf{K}|_C}(w \cdot h),
\end{align*}
see \cite[Thm.~8.1.5]{huybrechts2010geometry}. The last isomorphism above follows by applying the map $\lambda_{(-)}(w)$ to the short exact sequence \eqref{eq:shortK} and using \cite[Lem.~8.1.2, i)]{huybrechts2010geometry}. But since $\Psi_\sheaf{K}$ is constant, we obtain that $\lambda_\sheaf{K|_C}(w \cdot h)$ is trivial. Therefore, the classifying morphism $C \to M$ corresponding to the family $\sheaf{K}|_C$, as given by Theorem~\ref{thm:mainThm}, is constant. This shows that $E_1$ and $E_2$ define the same point in $M$ if they lie in the same connected component of $\rho^{-1}(\xi)$. Below we show that all fibers of $\rho$ are in fact connected, which will complete the proof.
\end{proof}

The proof of the following result is inspired by \cite{ellingsrud99irreducibility}.

\begin{theorem}
Let $F$ be a torsion-free sheaf on $X$ and $\xi \in S^l(X)$ a zero-cycle of length $l$. Then the fiber $\Quot_{\xi}(F,l) \coloneqq \rho^{-1}(\xi)$ over $\xi$ of the Quot-to-Chow morphism $\rho : \Quot(F,l) \to S^l(X)$ is connected.
\end{theorem}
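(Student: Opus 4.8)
The plan is to reduce first to a single point and then to argue by induction on the length using a flag (nested) Quot scheme, in the spirit of Fogarty's proof that Hilbert schemes of points are connected. This route handles an arbitrary torsion-free $F$ and only uses that connectedness is a topological property.

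First I would reduce to the punctual case. Write $\xi = \sum_{i=1}^r \ell_i[x_i]$ with the $x_i$ pairwise distinct. Any quotient $[q:F\to Q]$ lying over $\xi$ has $Q$ of finite length supported on $\{x_1,\dots,x_r\}$, so $Q$ decomposes canonically as $Q\cong\bigoplus_i Q_i$ with $Q_i$ supported at $x_i$ of length $\ell_i$, and correspondingly $q=(q_i)$. This yields an isomorphism
\[
\Quot_\xi(F,l)\cong\prod_{i=1}^r \Quot_{\ell_i[x_i]}(F),
\]
where $\Quot_{m[x]}(F)\coloneqq\rho^{-1}(m[x])$ denotes the punctual Quot scheme of length-$m$ quotients of $F$ supported at a single point $x$. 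Since a finite product of connected $\C$-schemes is connected, it suffices to prove that each $\Quot_{m[x]}(F)$ is connected.

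I would prove this by induction on $m$, the case $m=0$ being a point and $m=1$ giving the projective space $\CP((F\otimes k(x))^\vee)$. For the inductive step, introduce the flag scheme
\[
N\coloneqq\{\,K\subset K'\subset F:\ \length(F/K')=m-1,\ \length(K'/K)=1,\ \Supp(F/K)=\{x\}\,\},
\]
realized as the relative punctual Quot scheme $\Quot_{1[x]}(\sheaf{K}')$ of the universal kernel $\sheaf{K}'$ over $\Quot_{(m-1)[x]}(F)$; since $\sheaf{K}'$ is flat over the base, $N$ is projective over $\Quot_{(m-1)[x]}(F)$. The first projection $p_1:N\to\Quot_{(m-1)[x]}(F)$, $(K',K)\mapsto K'$, is projective and surjective, with fibre over $[K']$ equal to $\CP((K'\otimes k(x))^\vee)$; this is a nonempty projective space because $K'\subset F$ has finite colength, so $x\in\Supp(K')$ and hence $K'\otimes k(x)\neq 0$ by Nakayama. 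In particular every fibre of $p_1$ is connected. By the inductive hypothesis $\Quot_{(m-1)[x]}(F)$ is connected, and a proper surjective morphism with connected fibres over a connected base has connected source (if $N=A_1\sqcup A_2$ with both parts closed and nonempty, each connected fibre lies in one part, so the closed sets $p_1(A_1),p_1(A_2)$ would disconnect the base). Hence $N$ is connected. Finally the second projection $p_2:N\to\Quot_{m[x]}(F)$, $(K',K)\mapsto K$, is surjective, since any length-$m$ quotient $F/K$ supported at $x$ has nonzero socle, and choosing a one-dimensional $S\subset\Soc(F/K)$ and letting $K'$ be the preimage of $S$ in $F$ produces a point of $N$ over $[K]$. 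As the continuous image of the connected scheme $N$, the scheme $\Quot_{m[x]}(F)$ is connected, completing the induction.

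The points demanding care are the surjectivity of $p_2$, which rests on the nonvanishing of the socle of a nonzero finite-length module, and the verification that $p_1$ is a genuine projective-space fibration with nonempty connected fibres over \emph{all} of $\Quot_{(m-1)[x]}(F)$, together with the flatness of the universal kernel $\sheaf{K}'$ needed to form the relative Quot scheme $N$. I expect the main obstacle to be purely this bookkeeping around the flag scheme; once $N$ is set up correctly, the propagation of connectedness through the proper surjection $p_1$ and then through $p_2$ is routine, so the argument avoids the delicate irreducibility analysis of \cite{ellingsrud99irreducibility} (which in any case fails in dimension $\geq 3$) and yields connectedness in all dimensions.
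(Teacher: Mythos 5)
Your proposal is correct and follows essentially the same route as the paper: your flag scheme $N$ with its two projections is exactly the paper's $Z = \CP(\sheaf{K})$ (the projectivized universal kernel over the smaller punctual Quot scheme) together with its structure map and the surjection $\varphi$ onto the length-$(m)$ Quot scheme, and your surjectivity argument for $p_2$ via a line in $\Soc(F/K)$ is the paper's choice of $\lambda : k(x) \to M \subset T_x$. The only differences are presentational: you spell out the product decomposition over the support of $\xi$ and the propagation of connectedness through a proper surjection with connected fibers, both of which the paper leaves implicit.
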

\begin{proof}
The proof is by induction on $l$. It is enough to prove the statement when $\xi$ is supported at a single closed point of $X$, say $x \in X$. If $l = 1$, then $\xi$ corresponds to the closed point $x \in X$, and so $\Quot_{\xi}(F,1)$ is isomorphic to the projective space $\CP(F(x))$, with $F(x) = F_x \otimes k(x)$, which is clearly connected.

Now suppose that $\Quot_{\xi}(F,l)$ is connected for some $l > 0$. Let $\sheaf{U}$ be the universal family of quotients over $\Quot_{\xi}(F,l)$ and consider the short exact sequence
\begin{align*}
  0 \to \sheaf{K} \to \sheaf{O}_{\Quot} \otimes F \to \sheaf{U} \to 0
\end{align*}
over $\Quot_{\xi}(F,l)$. Then $Z \coloneqq \CP(\sheaf{K})$ is a projective space over $\Quot_{\xi}(F,l) \times X$. Note that $Z$ is connected, since $\Quot_{\xi}(F,l) \times X$ is so. 

Next we want to construct a surjective morphism $\varphi : Z \to \Quot_\xi(F,l+1)$. Let us first describe this map set-theoretically, at the level of geometric points.  By construction, a closed point of $Z$ corresponds to a quotient $[\mu : K \to k(x)] \in \CP(K)$, where $K$ is the kernel of some closed point $[F \to Q] \in \Quot_{\xi}(F,l)$. If $K' \coloneqq \Ker(\mu)$, then $F/K'$ is a zero-dimensional quotient of $F$ of length $l+1$ on $X$. We send $[K \to k(x)] \in Z$ through $\varphi$ to the quotient $[F \to F/K'] \in \Quot_\xi(F,l+1)$. In fact, one can do this construction in families, as described in \cite[Prop.~5]{ellingsrud99irreducibility}, to define an algebraic morphism $\varphi : Z \to \Quot_\xi(F,l+1)$. It remains to show that $\varphi$ is surjective. Let $[q: F \to T] \in \Quot_\xi(F,l+1)$ be a closed point and denote by $K \coloneqq \Ker(q)$ its kernel. Let $M \subset T_x$ be the subsheaf of all elements in $T_x$ annihilated by the maximal ideal in $\sheaf{O}_{X,x}$ and choose a point $[\lambda : k(x) \to M] \in \CP(M^{\vee})$. This induces a diagram as follows
\begin{center}
\begin{tikzcd}[column sep=normal]
            & 0                        & 0           &             & \\
0 \arrow[r] & k(x) \arrow[u] \arrow[r,"\lambda"] & T \arrow[u] \arrow[r] & Q \arrow[r] & 0 \\
0 \arrow[r] & K \arrow[u,"\mu"'] \arrow[r] & F \arrow[u,"q"'] \arrow[r] & Q \arrow[u,equal] \arrow[r] & 0 
\end{tikzcd}
\end{center}
such that $[F \to Q]$ is a zero-dimensional quotient of length $l$ on $X$. Then $[\mu : K \to k(x)] \in Z$ is mapped to $[q: F \to T]$ by $\varphi$, which shows that $\varphi$ is surjective. As $Z$ is connected, we conclude that $\Quot_\xi(F,l+1)$ must also be connected. 
\end{proof}
\bibliography{bib/mainbib}

\Address
\end{document}